\documentclass[12pt,reqno]{amsart}

\usepackage{fixltx2e} 
\usepackage[utf8]{inputenc} 
\usepackage{amssymb, latexsym, stmaryrd, amsthm, dsfont, amsfonts, amsbsy, amsmath, mathrsfs} 
\usepackage{mathtools} 
\usepackage{bm, bbm} 
\usepackage{enumerate} 
\usepackage{verbatim} 
\usepackage{url}   
\usepackage{lscape} 
\usepackage{centernot}
\usepackage[dvipsnames]{xcolor}
\usepackage[colorinlistoftodos]{todonotes} 
\usepackage{nicefrac} 
\usetikzlibrary{calc}
\usepackage{todonotes}

\usepackage{microtype,tikz,tikz-cd}  
\makeatletter 
\def\MT@register@subst@font{\MT@exp@one@n\MT@in@clist\font@name\MT@font@list
 \ifMT@inlist@\else\xdef\MT@font@list{\MT@font@list\font@name,}\fi}
\makeatother

\makeatletter 
\newcommand{\myitem}[1]{%
\item[(#1)]\protected@edef\@currentlabel{#1}%
}
\makeatother

\usepackage[pdftex,bookmarks,bookmarksnumbered,linktocpage,   
         colorlinks,linkcolor=blue,citecolor=blue]{hyperref}
\usepackage[capitalize,noabbrev]{cleveref} 

\newcommand{\bit}{\begin{itemize}}    
\newcommand{\eit}{\end{itemize}}
\newcommand{\ben}{\begin{enumerate}}
\newcommand{\een}{\end{enumerate}}

\newcommand{\benroman}{\ben[\normalfont (i)]}  
\let\eroman\een

\newcommand{\bde}{\begin{description}}
\newcommand{\ede}{\end{description}}
\let\oper=\mathbb                               

\newcommand{\III}{\oper{I}}                     
\newcommand{\SSS}{\oper{S}}                     
\newcommand{\VVV}{\oper{V}}                     

\theoremstyle{plain}

\newtheorem{Theorem}{Theorem}[section]

\newtheorem{Proposition}[Theorem]{Proposition}

\newtheorem{Corollary}[Theorem]{Corollary}
\newtheorem{Claim}[Theorem]{Claim}
\newtheorem{Subclaim}[Theorem]{Subclaim}

\theoremstyle{definition}

\newtheorem{exa}[Theorem]{Example}

\theoremstyle{remark}

\newtheorem{Remark}[Theorem]{Remark}

\crefname{Theorem}{Theorem}{Theorems}
\crefname{Proposition}{Proposition}{Propositions}
\crefname{Lemma}{Lemma}{Lemmas}
\crefname{Corollary}{Corollary}{Corollaries}
\crefname{Claim}{Claim}{Claims}
\crefname{Subclaim}{Subclaim}{Subclaims}
\crefname{Definition}{Definition}{Definitions}
\crefname{exa}{Example}{Examples}
\crefname{Remark}{Remark}{Remarks}
\crefname{Fact}{Fact}{Facts}
\crefname{exer}{Exercise}{Exercises}
\crefname{problem}{Problem}{Problems}

\AddToHook{env/Proposition/begin}{\crefalias{Theorem}{Proposition}}
\AddToHook{env/Lemma/begin}{\crefalias{Theorem}{Lemma}}
\AddToHook{env/Corollary/begin}{\crefalias{Theorem}{Corollary}}
\AddToHook{env/Claim/begin}{\crefalias{Theorem}{Claim}}
\AddToHook{env/Subclaim/begin}{\crefalias{Theorem}{Subclaim}}
\AddToHook{env/Definition/begin}{\crefalias{Theorem}{Definition}}
\AddToHook{env/exa/begin}{\crefalias{Theorem}{exa}}
\AddToHook{env/Remark/begin}{\crefalias{Theorem}{Remark}}

\let\leq=\leqslant

\let\geq=\geqslant 

\usepackage[mathscr]{euscript}
 \let\mathscr\relax 
\usepackage[scr]{rsfso}

\newcommand{\dom}{\mathsf{dom}}

\renewcommand{\int}{\mathsf{int}\,}

\bmdefine{\A}{A} 
\bmdefine{\C}{C}                                
                              
\bmdefine{\2}{2}
\bmdefine{\B}{B}
\bmdefine{\D}{D}
\bmdefine{\E}{E}
\bmdefine{\Luk}{L}
\bmdefine{\Term}{T} 
\bmdefine{\Free}{F}
\bmdefine{\Fb}{F}

\usepackage{scalerel}

\newcommand{\V}{\mathsf{V}}

\newcommand{\K}{\mathsf{K}}

\newcommand{\HHH}{\mathbb{H}}
\newcommand{\PPP}{\mathbb{P}}

\newcommand{\PPU}{\mathbb{P}_{\!\textsc{\textup{u}}}^{}}

\newcommand{\ext}{\mathsf{ext}}
\newcommand{\extpp}{\mathsf{ext}_{\textsc{pp}}}

\newcommand{\CMon}{\mathsf{CM}}

\newcommand{\imppp}{\mathsf{imp}_{\textsc{pp}}}

\renewcommand{\L}{\mathscr{L}}
\newcommand{\F}{\mathcal{F}}

\newcommand{\Con}{\mathsf{Con}}

\DeclareUnicodeCharacter{001B}{}

\subjclass[2020]{20M14, 20M07, 18A20, 20M18, 03C05}

\keywords{Commutative monoid, variety, epimorphism, monomorphism, Beth companion, implicit operation, inverse monoid, zigzag theorem, dominion}

\begin{document}

\title[Implicit operations in varieties of  commutative monoids]{Implicit operations in varieties of\\  commutative monoids}

\author{Luca Carai, Miriam Kurtzhals, and Tommaso Moraschini}

\address{Luca Carai: Dipartimento di Matematica ``Federigo Enriques'', Universit\`a degli Studi di Milano, via Cesare Saldini 50, 20133 Milano, Italy}\email{luca.carai.uni@gmail.com}

\address{Miriam Kurtzhals and Tommaso Moraschini: Departament de Filosofia, Facultat de Filosofia, Universitat de Barcelona (UB), Carrer Montalegre, $6$, $08001$ Barcelona, Spain}
\email{mkurtzku7@alumnes.ub.edu \textnormal{and} tommaso.moraschini@ub.edu}
\email{ }

\begin{abstract}
    An \emph{implicit operation} of a class of similar algebras $\mathsf{K}$ is a collection of first order definable partial functions
on the members of $\mathsf{K}$ that is globally preserved by homomorphisms. For instance, ``taking inverses'' can be viewed as a unary implicit operation of the class of all monoids because its graph on a given monoid is defined by the equation $xy \thickapprox 1 \thickapprox yx$ and monoid homomorphisms preserve existing inverses.\ As this example demonstrates, the implicit operations of a class $\K$ need not be given by a term of $\K$.
We show that an equational class of commutative monoids can be expanded with enough implicit operations so that every implicit operation can be interpolated by a family of terms if and only, in each of its members, for every $a$ there exists some $b$ such that $a = a^2b$, i.e., the class consists of \emph{inverse monoids}.
Our methods build on the interaction of the theory of implicit operations with Grillet's description of finitely generated subdirectly irreducible commutative semigroups and the combinatorics deriving from an extension of Isbell's Zigzag Theorem to all equational classes of commutative monoids.
\end{abstract}

\maketitle

\section{Introduction}

\subsection*{Implicit operations} An \emph{algebraic language} is a family $\L$ of function symbols together with a map $\mathsf{ar} \colon \L \to \mathbb{N}$ that associates an arity with every member of $\L$. Then, an $\L$-\emph{algebra} is a structure $\A = \langle A; \{ f^\A : f \in \L \} \rangle$, where $A$ is a nonempty set  and $f^\A$ is a function on $A$ of arity $\mathsf{ar}(f)$ for every $f \in \L$ (see, e.g., \cite[Chap.\ 1.1]{Ber11} and \cite[Chap.\ 1.3]{ModCK}).\ In this case, we often say that $\L$ is the language of the algebra $\A$. Familiar examples of algebras include monoids, groups, rings, and Boolean algebras. Two algebras are said to be \emph{similar} when they have the same language.
 
An \emph{implicit operation} of a class of similar algebras $\K$ is a family of partial functions 
on the members of $\K$ that is globally preserved by homomorphisms and definable by a formula (see \cite[Sec.\ 3]{CKMIMP}).
More precisely, an $n$-ary \emph{operation} of $\mathsf{K}$ is a sequence $f = \langle f^\A : \A \in \mathsf{K} \rangle$, where each $f^\A \colon \dom(f^\A) \to A$ is a partial $n$-ary function on $A$ with domain $\mathsf{dom}(f^\A) \subseteq A^n$
that is  globally preserved by the homomorphisms between members of $\mathsf{K}$.
The latter means that for every homomorphism $h \colon \A \to \B$ with $\A, \B \in \K$ and $\langle a_1, \dots, a_n \rangle \in \mathsf{dom}(f^\A)$ we have
\[
\langle h(a_1), \dots, h(a_n) \rangle \in \mathsf{dom}(f^\B) \, \, \text{ and } \, \, h(f^\A(a_1, \dots, a_n)) = f^\B(h(a_1), \dots, h(a_n)).
\]
 An $n$-ary operation $f$  of $\K$ is said to be \emph{implicit} when it is defined by some first order formula $\varphi(x_1, \dots, x_n, y)$, in the sense that for all $\A \in \K$ and $a_1, \dots, a_n, b \in A$,
 \[
\A \vDash \varphi(a_1, \dots, a_n, b) \iff \langle a_1, \dots, a_n \rangle \in \mathsf{dom}(f^\A) \text{ and }f^\A(a_1, \dots, a_n) = b.
 \]
 For instance, “taking  inverses” is an implicit operation of the class of all monoids because it can be defined by the conjunction of equations $\varphi(x, y) = (xy \thickapprox 1) \sqcap (yx \thickapprox 1)$ and  monoid homomorphisms preserve  inverses when they exist.

\subsection*{Interpolation and epimorphisms} A \emph{term} of a class of similar algebras $\K$ is a formal expression obtained by applying the function symbols of $\K$ to the set of variables. For instance, $(1 \cdot (x \cdot y)) \cdot z$ is a monoid term. With every term $t(x_1, \dots, x_n)$ of $\K$ and $\A \in \K$ we associate a map $t^\A \colon A^n \to A$ that sends a tuple $\langle a_1, \dots, a_n \rangle \in A^n$ to the result of applying $t$ to the elements $a_1, \dots, a_n$.
For instance, if $t(x, y, z) = (1 \cdot (x \cdot y)) \cdot z$ and $\A$ is a monoid, then $t^\A \colon A^3 \to A$ is the map defined as $t^\A(a, b, c) = (1 \cdot (a \cdot b)) \cdot c = abc$ for all $a, b, c \in A$.

As the implicit operations of a class of similar algebras $\K$ do not need to belong to the language of $\K$, it is natural to wonder whether they can at least be interpolated by the terms of $\K$.
To this end, we say that an $n$-ary implicit operation $f$ of $\K$ is interpolated by a family of terms $\{ t_i : i \in I \}$ of $\K$ when for every $\A \in \K$ and $\langle a_1, \dots, a_n \rangle \in \mathsf{dom}(f^\A)$ there exists $i \in I$ such that $f^\A(a_1, \dots, a_n) = t_i^\A(a_1, \dots, a_n)$. Intuitively, the implicit operation $f$ is made “explicit” by the terms in $\{ t_i : i \in I \}$. If every implicit operation of $\K$ is interpolated by a family of terms of $\K$, we say that $\K$ has the \emph{strong Beth definability property} (see \cite[Sec.\ 5]{CKMIMP}).

We recall that a class of similar algebras is said to be a \emph{variety} when it can be axiomatized by equations or, equivalently, when it is closed under homomorphic images, subalgebras, and direct products (see, e.g., \cite[Thm.\ II.11.9]{BuSa00}). Notably, in the context of varieties, the strong Beth definability property is equivalent to the following purely categorical property (see \cite[Thm.\ 6.5]{CKMIMP}).
A variety $\K$ is said to have the \emph{strong epimorphism surjectivity property} when for all homomorphism $f \colon \A \to \B$ with $\A, \B \in \K$ and $b \in B - f[A]$ there exists a pair of homomorphisms $g, h \colon \B \to \C$ with $\C \in \K$ such that $g{\upharpoonright}_{f[A]} = h{\upharpoonright}_{f[A]}$ and $g(b) \ne h(b)$ (see, e.g., \cite[p.\ 176]{MakpBeth} or \cite{SurvKissal})\footnote{In \cite{SurvKissal}, the strong epimorphism surjectivity property is called IPA and defined in a different, but equivalent, way.} or, equivalently, when all  monomorphisms are regular in $\K$ (see, e.g., \cite[Prop.~6.1]{SurvKissal}). For instance, the inclusion of $\langle \mathbb{Z}; \cdot, 1 \rangle$ into $\langle \mathbb{Q}; \cdot, 1 \rangle$ witnesses  a failure of the strong  Beth definability property in the variety of all (resp.\ commutative) monoids because it shows that the implicit operation of “taking inverses” cannot be interpolated by any family of monoid terms (otherwise $\mathbb{Z}$ would be a set of generators for $\langle \mathbb{Q}; \cdot, 1 \rangle$, which is false), as well as a failure of the strong epimorphism property in the same variety because monoid homomorphisms preserve inverses.

\subsection*{The main result}

As the above example shows, the implicit operations of a class of algebras $\K$  need not be interpolated by  terms of $\K$ in general. Therefore, it is natural to expand  $\K$ by adding enough implicit operations to ensure the validity of the strong Beth definability property. The result of such an expansion of $\K$ has been called a \emph{Beth companion} of $\K$ in \cite[Sec.\ 11]{CKMIMP}. For instance, the variety of Abelian groups is a Beth companion of the class of cancellative commutative monoids (see \cite[Thm.\ 11.9(i)]{CKMIMP}) and a Beth companion for the class of reduced commutative rings can be obtained by adding the implicit operations of “taking weak inverses and weak prime roots” (see \cite{CKMRings}).
In general, a class of algebras may admit distinct Beth companions or none, the latter being the case for the class of all (resp.\ commutative) monoids (see \cite[Thm.\ 14.1]{CKMIMP}).
Nonetheless, in the context of varieties, Beth companions are essentially unique when they exist because all the Beth companions of a variety are term equivalent (see \cite[Thm.~11.7]{CKMIMP}).

Our main result is a description of the varieties of commutative monoids with a Beth companion. We recall that a commutative monoid $\langle A; \cdot, 1 \rangle$ is said to be \emph{inverse} when for every $a \in A$ there exists $b \in A$ such that $a = a^2 b$ (see, e.g., \cite[p.\ 242]{Isb65}  and \cite[Lem.~4]{RapregR}).
We also say that a variety of commutative monoids is \emph{inverse} when so are its members or, equivalently, when it validates the equation $x^{n+1} \thickapprox x$ for some $n> 0$ (see \cref{Thm : char of inverse monoids}). We will prove that a variety of commutative monoids has a Beth companion if and only if it is inverse, in which case it is its own Beth companion (see \cref{Thm : main CMon Vars}). In other words, either a variety of commutative monoids already has the strong Beth definability property (in which case it is inverse) or it is hopeless to expand it with some implicit operations to ensure the validity of this property.
As a variety is its own Beth companion precisely when it has the strong epimorphism surjectivity property (see \cite[Thms.\ 11.9(vi) and 11.6]{CKMIMP}),
we deduce that the  varieties of commutative monoids with this property are precisely the inverse ones.

We recall that the variety of all commutative monoids lacks the strong Beth definability property because, in general, the implicit operation of “taking inverses” cannot be interpolated by a family of monoid terms. On the other hand, inverses can be interpolated in every inverse variety $\K$ by a term of the form $x^{n-1}$, because $\mathsf{K}$ validates  the equation $x^{n+1} \thickapprox x$ for some $n > 0$ . It is therefore tempting to conjecture that this is the reason why the sole varieties of commutative monoids with a Beth companion are the inverse ones. However, this impression should be dispelled because every proper variety of commutative monoids $\K$ possesses an implicit operation $f$ that interpolates the implicit operation of “taking inverses” and, moreover, is \emph{extendable}, in the sense that each $\A \in \K$ can be extended to some $\B \in \K$ in which $f^\B$ is total and extends $f^\A$ (see \cref{Exa : inverses are interpolable}).
The latter makes it possible to expand $\K$ by adding $f$, thus resolving the problem of intepolating  existing inverses. However, as our theorem shows, this is not enough to ensure the validity of the strong Beth definability property, unless $\K$ was an inverse variety from the start (in which case, there is no need to expand it at all).

\subsection*{Isbell's Zigzag Theorem}

In order to explain what prevents the existence of a Beth companion for noninverse varieties of commutative monoids, we rely on the notion of a dominion. More precisely, given a subalgebra $\A$  of a member $\B$ of a variety $\K$ (in symbols $\A \leq \B \in \K$), the \emph{dominion} of $\A$ in $\B$ relative to $\K$
is the set
\begin{align*}
\mathsf{d}_\K(\A, \B) = \{ & b \in B : g(b) = h(b) \text{ for every pair of homomorphisms} \\
&g, h \colon \B \to \C  \text{ with }\C \in \K \text{ such that }g{\upharpoonright}_A = h{\upharpoonright}_A\}.
\end{align*}
As $\K$ is closed under subalgebras, $\K$ has the strong epimorphism surjectivity property if and only if $\mathsf{d}_\K(\A, \B) = A$ for every $\A \leq \B \in \K$. 

A celebrated result,  known as \emph{Isbell's Zigzag Theorem} (here, \cref{Thm : zigzag original}), provides a description of dominions in the varieties of all monoids and all (resp.\ commutative) semigroups  in terms of certain formulas, which we term \emph{Isbell's formulas} (see  \cite[Thm.\ 2.3]{Isb65},  \cite[Thm.\ 1.1]{HoIsEpiII}, and \cite[Thm.\ 1.2]{HowZigzag}). 
By means of elementary category theory tools and Head's classification of varieties of commutative monoids (see, e.g.,  \cite[Thm.\ 5.1]{GLVVCMon}), we show that Isbell's Zigzag Theorem can be extended to all such varieties (see \cref{Thm : Doms in CMon Vars}). 
For the present purpose, the interest of Isbell's Zigzag Theorem derives from the observation that the simplest  Isbell's formula 
\[
\varphi(x_1, x_2,x_3, y) = \exists w, z ((y \approx wx_2z) \sqcap (wx_2 \approx x_1) \sqcap (x_2z \approx x_3))
\]
defines an implicit operation  on every proper variety $\K$ of commutative monoids and, unless $\K$ is inverse, the proof of \cref{Thm : main CMon Vars}
shows that $\K$ cannot be expanded so that this implicit operation be interpolated by a family of terms. This impossibility proof relies on Grillet's characterization of finitely generated subdirectly irreducible commutative semigroups (see \cite[Cor.\ IV.4.6]{Gri01})
and requires the full power of Isbell's Zigzag Theorem for $\K$.

\section{Commutative monoids}

\subsection{Basic concepts} 

Given a binary operation  $\cdot$ on a set $A$ and $a, b \in A$, we will write $ab$ as a shorthand for $a \cdot b$. Moreover, we recall that
\benroman
\item a \emph{semigroup} is an algebra $\langle A; \cdot \rangle$, where $\cdot$ is a binary associative operation on $A$;
\item an element $a$ of a semigroup $\langle A; \cdot \rangle$ is  \emph{neutral} when $ab = b = ba$ for every $b \in A$;
\item a \emph{monoid} is an algebra $\langle A; \cdot, 1 \rangle$, where $\langle A; \cdot \rangle$ is a semigroup with neutral element $1$.
\eroman
A semigroup may have at most one neutral element.\
A monoid or a semigroup is said to be \emph{commutative} when the operation $\cdot$ is commutative. The \emph{semigroup reduct} of a monoid $\A = \langle A; \cdot, 1 \rangle$ is the semigroup $\A_\mathsf{s} = \langle A; \cdot \rangle$.

An element $a$ of a commutative semigroup $\A$ is said to be
    \benroman
    \item a \emph{zero element} when $ab = a$ for every $b \in A$;
    \item $n$-\emph{nilpotent} for some $n \in \mathbb{N}$ when $a^n$ is a zero element;
    \item \emph{nilpotent} when it is $n$-nilpotent for some $n \in \mathbb{N}$; 
    \item \emph{cancellative} when $ab = ac$ implies that $b = c$ for all $b, c \in A$; 
    \item an \emph{inverse} of some $b \in A$ when $ab$ is neutral, in which case we write $a^{-1} = b$; 
    \item \emph{invertible} when it possesses an inverse.
    \eroman
A commutative semigroup or monoid $\A$ is called \emph{nilpotent} (resp.\ 
\emph{cancellative}) when so are all its elements. 

The following is an immediate consequence of the definitions. 

\begin{Proposition}\label{Prop : basic properties}
The following conditions hold for all commutative semigroups $\A$ and $a \in A$:
\benroman
\item\label{item : basic properties : 1} $\A$ may have at most one zero element;
\item\label{item : basic properties : 2} $a$ may have at most one inverse;
\item\label{item : basic properties : 4} if $a$ is invertible, then it is cancellative.
\eroman
\end{Proposition}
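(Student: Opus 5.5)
Each item follows by a short computation using associativity, commutativity and the defining equations. One preliminary observation is used twice: a commutative semigroup has at most one neutral element. Indeed, if $e$ and $e'$ are both neutral, then $e = ee' = e'$, using first that $e'$ is neutral and then that $e$ is neutral. So whenever $ab$ is neutral we may write it as \emph{the} neutral element $e$, and two products $ab$, $ac$ that are both neutral are equal.

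For \eqref{item : basic properties : 1}, suppose $z$ and $z'$ are zero elements. Since $z$ is a zero element, $zz' = z$. Since $z'$ is a zero element, $z'z = z'$. Commutativity gives $z = zz' = z'z = z'$.

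For \eqref{item : basic properties : 2}, suppose $b$ and $c$ are both inverses of $a$. Then $ab$ and $ac$ are neutral, so by the observation above $ab = ac = e$. Associativity then gives
\[
b = be = b(ac) = (ba)c = (ab)c = ec = c.
\]

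For \eqref{item : basic properties : 4}, let $d$ be an inverse of $a$, so $da = ad = e$ is neutral. Assume $ab = ac$. Multiplying on the left by $d$ and using associativity,
\[
b = eb = (da)b = d(ab) = d(ac) = (da)c = ec = c.
\]
Hence $a$ is cancellative.

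None of these steps is a real obstacle. The one point needing care is the uniqueness of the neutral element, which is what allows item \eqref{item : basic properties : 2} to identify $ab$ with $ac$.
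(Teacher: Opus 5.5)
Your proof is correct, and it is the routine verification the paper has in mind: the paper gives no argument beyond calling the proposition an immediate consequence of the definitions. One small remark: item (ii) does not need uniqueness of the neutral element, since $b = b(ac) = (ab)c = c$ only uses that $ac$ and $ab$ are each neutral.
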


\subsection{Varieties}

Let $\III,  
\HHH, \SSS, \PPP$, and $\PPU$
be the class operators of closure under 
isomorphisms, 
homomorphic images, subalgebras, direct products, and ultraproducts.
Then, a class of similar algebras is said to be a 
\emph{variety} when it is closed under $\HHH, \SSS$, and $\PPP$ or, equivalenlty, when it can be axiomatized by a set of equations (see, e.g., \cite[Thm.\ II.11.9]{BuSa00}  due to Birkhoff).
For every class of similar algebras $\K$ there exists the least 
variety containing $\K$, which we denote by 
$\VVV(\K)$.

We consider the following classes of algebras for every $m, n \in \mathbb{N}$:
\begin{align*}
\mathsf{S} &= \text{the variety of semigroups};\\ 
\mathsf{CS} &= \text{the variety of commutative semigroups};\\ 
\mathsf{M} &= \text{the variety of monoids};\\ 
\mathsf{CM} &= \text{the variety of commutative monoids};\\
\mathsf{A}_n &= \text{the variety of commutative monoids satisfying }x^n \thickapprox 1;\\
\mathsf{C}_n &= \text{the variety of commutative monoids satisfying }x^n \thickapprox x^{n+1};\\
\mathsf{V}_{m, n} &=\text{the variety of commutative monoids satisfying }x^{m+n} \thickapprox x^{n}.
\end{align*}

\begin{Remark}
For every $n > 0$ the members of $\mathsf{A}_n$ can be viewed as groups because for all $\A \in \mathsf{A}_n$ and $a \in A$ we have $a^{-1} = a^{n-1}$.
\qed
\end{Remark}

\begin{Proposition}\label{Prop : variety equalities}
For every $m, n \in \mathbb{N}$,
\begin{align*}
\mathsf{A}_0 &= \mathsf{CM};\\
\mathsf{A}_1 &= \mathsf{C}_0 = \text{the variety of trivial monoids};\\
\mathsf{V}_{m, n} &= \VVV(\mathsf{A}_m \cup \mathsf{C}_n).
\end{align*}
Furthermore, $\mathsf{V}_{m, n}$ is a proper subvariety of $\mathsf{CM}$ if and only if $m > 0$.
\end{Proposition}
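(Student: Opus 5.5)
The first two equalities can be read off the defining equations, and the third is a two-inclusion argument that rests on a small fact about powers.

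\emph{First two equalities.} The equation $x^0 \thickapprox 1$ is trivially valid, since $x^0$ is the empty product $1$, so $\mathsf{A}_0 = \mathsf{CM}$. The equation $x^1 \thickapprox 1$ forces every element to equal $1$. The equation $x^0 \thickapprox x^1$ reads $1 \thickapprox x$, which does the same. Hence $\mathsf{A}_1$ and $\mathsf{C}_0$ are both the variety of trivial monoids.

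\emph{Inclusion $\VVV(\mathsf{A}_m \cup \mathsf{C}_n) \subseteq \mathsf{V}_{m,n}$.} Since $\mathsf{V}_{m,n}$ is a variety, it suffices to check that $\mathsf{A}_m$ and $\mathsf{C}_n$ satisfy $x^{m+n} \thickapprox x^n$.
\begin{itemize}
\item In $\mathsf{A}_m$ we have $x^{m+n} = x^m x^n = x^n$.
\item In $\mathsf{C}_n$, iterating $x^n \thickapprox x^{n+1}$ gives $x^n \thickapprox x^{n+k}$ for every $k \in \mathbb{N}$, and in particular for $k = m$.
\end{itemize}

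\emph{Inclusion $\mathsf{V}_{m,n} \subseteq \VVV(\mathsf{A}_m \cup \mathsf{C}_n)$.} This is the main step. Let $\B$ be the free commutative monoid of $\mathsf{V}_{m,n}$ on $\omega$ generators. It suffices to show $\B \in \SSS\PPP(\mathsf{A}_m \cup \mathsf{C}_n)$.

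The elements of $\B$ are monomials $x_1^{k_1}\cdots x_r^{k_r}$, where each exponent is read modulo the relation $k \sim k'$ defined by: $k = k'$, or both $k, k' \geq n$ and $k \equiv k' \pmod m$. I will separate distinct elements by homomorphisms into the following two targets.
\begin{itemize}
\item The group $\mathbb{Z}_m$, which lies in $\mathsf{A}_m$. Sending $x_i$ to $1 \in \mathbb{Z}_m$ and all other generators to $0$ records $k_i \bmod m$.
\item The monoid $\{1, a, \dots, a^n\}$ with $a^n = a^{n+1}$, which lies in $\mathsf{C}_n$. Sending $x_i \mapsto a$ and all other generators to $1$ records $\min(k_i, n)$.
\end{itemize}
Two monomials agree under all of these maps exactly when, for every $i$, their $i$-th exponents agree modulo $m$ and have the same value of $\min(\cdot, n)$. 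This holds precisely when the exponents are $\sim$-equivalent: if both are below $n$ they must be equal, and otherwise both are at least $n$ and congruent mod $m$.

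The potential obstacle is justifying this normal form for $\B$, i.e. showing that the congruence generated by $x^{m+n} \thickapprox x^n$ on the free commutative monoid identifies exactly $\sim$-equivalent exponents, variable by variable. To handle it:
\begin{itemize}
\item Substituting a product of variables into $x^{m+n} \thickapprox x^n$ only produces identifications between monomials whose exponent vectors are congruent mod $m$ and satisfy $\min(\cdot, n)$-equality in each coordinate. These are exactly the conditions detected by the maps above, so no further identifications arise.
\item Conversely, those maps can be checked directly to be homomorphisms out of $\B$.
\end{itemize}
Hence $\B$ embeds into a product of copies of $\mathbb{Z}_m$ and of the $n$-element nilpotent-chain monoid above. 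Therefore $\mathsf{V}_{m,n} = \HHH(\B) \subseteq \VVV(\mathsf{A}_m \cup \mathsf{C}_n)$.

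\emph{Case $m = 0$.} Here $\mathsf{A}_0 = \mathsf{CM}$ and $x^{n} \thickapprox x^n$ is trivial, so both sides equal $\mathsf{CM}$.

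\emph{Properness.} If $m = 0$, then $\mathsf{V}_{m,n} = \mathsf{CM}$. If $m > 0$, the monoid $\langle \mathbb{N}; +, 0 \rangle$ is commutative and fails $x^{m+n} \thickapprox x^n$, since $(m+n)\cdot 1 \neq n \cdot 1$. So $\mathsf{V}_{m,n}$ is a proper subvariety of $\mathsf{CM}$ exactly when $m > 0$.
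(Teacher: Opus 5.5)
Your proof is correct, but it takes a different route from the paper's for the main equality $\mathsf{V}_{m,n} = \VVV(\mathsf{A}_m \cup \mathsf{C}_n)$.

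\textbf{The paper's route.} For $m > 0$ the paper does no free-algebra computation. It invokes Head's classification \cite[Thm.~5.1]{GLVVCMon} to write $\mathsf{V}_{m,n} = \VVV(\mathsf{A}_s \cup \mathsf{C}_t)$ for some $s, t$. It then uses two test algebras to force $s \mid m$ and $t \leq n$: the cyclic group of order $s$, and the quotient of $\langle \mathbb{N}; +, 0\rangle$ collapsing everything $\geq t$. The inclusion into $\VVV(\mathsf{A}_m \cup \mathsf{C}_n)$ follows.

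\textbf{Your route.} You show directly that $x^{m+n} \thickapprox x^n$ axiomatizes the equational theory of $\mathsf{A}_m \cup \mathsf{C}_n$. You do this by separating points of the relatively free algebra with maps into $\mathbb{Z}_m$ and into the chain $\{1, a, \dots, a^n\}$. This is self-contained and elementary; in effect it reproves the fragment of Head's theorem that is needed. The paper's argument is shorter because it relies on the classification, which it needs anyway for \cref{Thm : classification of CMon-Vars}.

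\textbf{Slips to fix.} None of these affects correctness.

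First, your two ``To handle it'' items both prove the same inclusion: the congruence $\theta$ presenting $\B$ is contained in coordinatewise $\sim$, i.e., the separating maps are well defined on $\B$. The first item is in fact redundant. Your first inclusion already shows the targets lie in $\mathsf{V}_{m,n}$, so the maps factor through $\B$ by freeness. Injectivity of $\B$ into the product needs the reverse inclusion: $\sim$-equivalent monomials are already equal in $\B$. You assert this when describing the elements of $\B$ but never justify it. It is one line: for $k \geq n$ and $j \in \mathbb{N}$, $x^{k+jm} = x^{k-n}x^{n+jm} = x^{k-n}x^{n} = x^{k}$ in $\mathsf{V}_{m,n}$, applied variable by variable.

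Second, $\mathsf{V}_{m,n} = \HHH(\B)$ is false for $\B$ free on $\omega$ generators, since homomorphic images of a countable algebra are countable. What you need, and what suffices, is $\mathsf{V}_{m,n} = \VVV(\B)$.

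Third, the chain $\{1, a, \dots, a^n\}$ has $n+1$ elements, not $n$.
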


\begin{proof}
The first 
two
equalities are immediate consequences of the definitions. 
The last equality is established as follows. 
When $m=0$, using the definition of $\V_{0, n}$ and the first equality in the above display, we obtain $\mathsf{V}_{0, n} =\mathsf{CM} = \VVV(\mathsf{A}_0 \cup \mathsf{C}_n)$. So, we may assume $m>0$.
From \cite[Thm.~5.1]{GLVVCMon} it follows that $\mathsf{V}_{m, n} = \VVV(\mathsf{A}_s \cup \mathsf{C}_t)$ for some $s,t \in \mathbb{N}$. Suppose, with a view to contradiction, that $s$ does not divide $m$, and let $\A$ be the monoid reduct of the cyclic group of order $s$. Then $\A \in \VVV(\mathsf{A}_s \cup \mathsf{C}_t) - \mathsf{V}_{m, n}$, which contradicts $\mathsf{V}_{m, n} = \VVV(\mathsf{A}_s \cup \mathsf{C}_t)$. Therefore, $s$ divides $m$. Assume, again with a view to contradiction, that $n < t$. Consider the monoid $\B=\langle \mathbb{N}; + , 0\rangle/\theta$, where $\theta$ is the congruence defined by $\langle a,b \rangle \in \theta$ if and only if  $a=b$ or $t \leq  a,b$.
Then $\B \in \VVV(\mathsf{A}_s \cup \mathsf{C}_t) - \mathsf{V}_{m, n}$, which contradicts $\mathsf{V}_{m, n} = \VVV(\mathsf{A}_s \cup \mathsf{C}_t)$. 
Therefore, $s$ divides $m$ and $t \leq n$.
From the definitions of the varieties involved, it follows that $\V_{m,n} = \VVV(\mathsf{A}_s \cup \mathsf{C}_t) \subseteq \VVV(\mathsf{A}_m \cup \mathsf{C}_n)$.
Since it is immediate 
to see that $\VVV(\mathsf{A}_m \cup \mathsf{C}_n) \subseteq \mathsf{V}_{m, n}$, we conclude that $\mathsf{V}_{m, n} = \VVV(\mathsf{A}_m \cup \mathsf{C}_n)$.

It only remains to prove the last part of the statement. Observe that $\mathsf{V}_{m, n}$ is axiomatized by $x^{m+n}\thickapprox x^n$, which is not valid in $\langle \mathbb{N}; + , 0\rangle$ when $m>0$. Therefore, $\mathsf{V}_{m, n}$ is proper if and only if $m > 0$.
\end{proof}

Varieties of commutative monoids admit a transparent classification due to Head.
 More precisely, every subvariety of $\CMon$ is of the form $\VVV(\mathsf{A}_m \cup \mathsf{C}_n)$  (see, e.g.,  \cite[Thm.\ 5.1]{GLVVCMon}). Together with \cref{Prop : variety equalities}, this yields the following.

\begin{Theorem} \label{Thm : classification of CMon-Vars}
    A class $\K$ of commutative monoids is a variety if and only if $\K = \mathsf{V}_{m, n}$ for some $ m, n \in \mathbb{N}$. 
\end{Theorem}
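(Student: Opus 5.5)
The argument combines Head's classification with \cref{Prop : variety equalities}; there is nothing deeper to prove.

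For the forward direction, let $\K$ be a variety of commutative monoids, i.e., a subvariety of $\CMon$. By Head's classification (see \cite[Thm.\ 5.1]{GLVVCMon}), $\K = \VVV(\mathsf{A}_m \cup \mathsf{C}_n)$ for some $m, n \in \mathbb{N}$. The last equality of \cref{Prop : variety equalities} then gives $\K = \mathsf{V}_{m,n}$. We should check that the degenerate cases are covered by this convention. The whole variety $\CMon$ arises with $m = 0$, since $\mathsf{A}_0 = \CMon$. The trivial variety arises as $\mathsf{V}_{1,0}$, since $x^{1} \thickapprox x^0 = 1$ holds only in trivial monoids. So no case falls outside the family $\{\mathsf{V}_{m,n}\}$.

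For the converse, each $\mathsf{V}_{m,n}$ is by definition the class of commutative monoids satisfying the equation $x^{m+n} \thickapprox x^n$. Adding this equation to the equational axioms of $\CMon$ gives an equational axiomatization, so $\mathsf{V}_{m,n}$ is a variety by Birkhoff's theorem.

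The only point needing care is reading Head's classification correctly. It must say that every subvariety, including the improper one $\CMon$ itself, has the form $\VVV(\mathsf{A}_m \cup \mathsf{C}_n)$, possibly with $m = 0$. This is exactly how the paper states it just before the theorem, so there is no genuine obstacle.
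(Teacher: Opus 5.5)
Your proposal is correct and matches the paper's argument. The paper derives the theorem directly from Head's classification, which says every subvariety of $\mathsf{CM}$ has the form $\VVV(\mathsf{A}_m \cup \mathsf{C}_n)$, combined with the equality $\mathsf{V}_{m,n} = \VVV(\mathsf{A}_m \cup \mathsf{C}_n)$ from \cref{Prop : variety equalities}; the converse is immediate since each $\mathsf{V}_{m,n}$ is equationally axiomatized, and your checks of the degenerate cases ($\mathsf{CM} = \mathsf{V}_{0,n}$ and the trivial variety $\mathsf{V}_{1,0}$) are harmless extras already covered by that proposition.
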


\subsection{Subdirect irreducibility} 

When ordered under inclusion, the set of congruences of an algebra $\A$ forms an algebraic lattice, denoted by  $\mathsf{Con}(\A)$, whose minimum is the identity relation $\mathsf{id}_A$ on $A$  (see, e.g., \cite[Thm.\ II.5.5]{BuSa00}). When $\mathsf{id}_A$ is completely meet irreducible in $\Con(\A)$, we say that $\A$ is \emph{subdirectly irreducible}. Given a class of algebras $\K$, we let
\[
\K_\textsc{si} = \{ \A \in \K : \A \text{ is subdirectly irreducible} \}.
\]
The importance of subdirectly irreducible algebras derived from Birkhoff’s Subdirect Decomposition Theorem,
which implies that $\K = \III\SSS\PPP(\K_\textsc{si})$ for every variety $\K$ (see, e.g., \cite[Thm.~II.8.6]{BuSa00}). 

Given an algebra $\A$ and $X \subseteq A$, we denote by $\mathsf{Sg}^\A(X)$ the subuniverse of $\A$ generated by $X$. 
We say that $\A$ is \emph{finitely generated} when $A = \mathsf{Sg}^\A(X)$ for some finite $X \subseteq A$. Lastly, a \emph{universal formula} is an expression of the form $\forall x_1, \dots, x_n \varphi$, where $\varphi$ is a quantifier-free formula.
\begin{Proposition}\label{Prop : universal sentences}
Let $\K$ be a variety and
\[
\K^* = \{ \A \in \K_\textsc{si} : \A \text{ is finitely generated} \}.
\]
Every universal formula valid in $\K^*$ is also valid in $\K_\textsc{si}$.
\end{Proposition}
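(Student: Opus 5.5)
We argue contrapositively: suppose a universal formula $\forall x_1, \dots, x_n \varphi$ (with $\varphi$ quantifier-free) fails in some $\A \in \K_\textsc{si}$, and produce a member of $\K^*$ in which it also fails. Pick $a_1, \dots, a_n \in A$ with $\A \nvDash \varphi(a_1, \dots, a_n)$. The natural first move is to pass to the finitely generated subalgebra $\B = \mathsf{Sg}^\A(a_1, \dots, a_n)$. Since $\varphi$ is quantifier-free, its truth value at $a_1, \dots, a_n$ is the same in $\B$ and in $\A$, so $\B \nvDash \varphi(a_1, \dots, a_n)$. Moreover, $\B \in \K$ because varieties are closed under $\SSS$. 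The difficulty is that subalgebras of subdirectly irreducible algebras need not be subdirectly irreducible, so $\B$ need not lie in $\K^*$. The obstacle is therefore to replace $\B$ by a finitely generated subdirectly irreducible algebra that still refutes $\varphi$.

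To do this, we use the monolith of $\A$. Since $\A$ is subdirectly irreducible and we may assume it is nontrivial (if $\A$ is trivial, $\varphi$ fails in a trivial algebra, and the trivial algebra is finitely generated, lies in $\K$, and can be treated separately or excluded by convention), the identity $\mathsf{id}_A$ is completely meet irreducible. Hence there exist $c \ne d$ in $A$ with $\langle c, d\rangle \in \theta$ for every congruence $\theta \ne \mathsf{id}_A$. We enlarge $\B$ to $\B' = \mathsf{Sg}^\A(a_1, \dots, a_n, c, d)$, which is still finitely generated and still refutes $\varphi$ at $a_1, \dots, a_n$.

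Now consider the set $\Theta$ of congruences $\theta$ of $\B'$ such that $\langle c, d\rangle \notin \theta$. This set contains $\mathsf{id}_{B'}$, and it is closed under unions of chains, because congruences are finitary and a union of a chain of congruences is a congruence. By Zorn's Lemma it has a maximal member $\theta$. Then $\B'/\theta$ is subdirectly irreducible: every congruence strictly above $\theta$ contains $\langle c, d\rangle$, so the congruence generated by $\langle c/\theta, d/\theta\rangle$ is the monolith of $\B'/\theta$. This is the standard argument. Furthermore, $\B'/\theta \in \K$ by closure under $\HHH$, and it is finitely generated, so $\B'/\theta \in \K^*$.

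The remaining and genuinely delicate step is to ensure that $\varphi$ still fails in $\B'/\theta$ at $a_1/\theta, \dots, a_n/\theta$. We must show that $\theta$ separates every pair $\langle p, q \rangle$ of elements of $B'$ with $p \ne q$ that occurs among the atomic subformulas evaluated at the $a_i$. For this, fix such $p \ne q$ in $B'$. The principal congruence $\Con$-generated by $\langle p, q\rangle$ in $\A$ is nontrivial, so it contains $\langle c, d\rangle$. By Mal'cev's description of principal congruences, this is witnessed by a finite chain of polynomial translates involving finitely many parameters from $A$. We therefore add these finitely many parameters, for each of the finitely many relevant pairs, to the generators of $\B'$ from the outset. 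Then, inside $\B'$, $\langle c, d\rangle$ lies in the congruence generated by $\langle p, q\rangle$. Consequently, if $\theta$ identified $p$ and $q$, it would contain $\langle c, d\rangle$, which contradicts the choice of $\theta$.

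It follows that $\theta$ separates all relevant terms. Since $\varphi$ is a Boolean combination of equations among the relevant terms, $\varphi(a_1/\theta, \dots, a_n/\theta)$ has the same truth value in $\B'/\theta$ as $\varphi(a_1, \dots, a_n)$ has in $\A$, namely false. Hence the formula fails in a member of $\K^*$, as required. The main care needed is this choice of the extra Mal'cev parameters before forming $\theta$.
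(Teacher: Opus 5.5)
Your proof is correct, and it takes a genuinely different route from the paper.

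\textbf{The paper's route.} It argues globally and by citation. It uses $\K = \III\SSS\PPP\PPU(\K^*)$ (from the authors' earlier work) together with a lemma of Czelakowski and Dziobiak to obtain $\K_\textsc{si} \subseteq \III\SSS\PPU(\K^*)$. It then concludes by the preservation of universal formulas under $\III$, $\SSS$, and $\PPU$.

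\textbf{Your route.} You argue locally, one refuting tuple at a time. You take $\B'$ generated by three kinds of elements: $a_1, \dots, a_n$; a monolith pair $\langle c, d\rangle$ of $\A$; and the finitely many parameters of Mal'cev chains witnessing $\langle c, d\rangle \in \mathsf{Cg}^\A(p, q)$ for each distinct pair $p \neq q$ of atomic-term values. These parameters depend only on $\A$, $\varphi$ and the $a_i$, so fixing them before choosing $\theta$ is legitimate. You then quotient by a congruence maximal with respect to omitting $\langle c, d\rangle$. Since the chains live in $\B'$, every relevant distinct pair stays distinct in $\B'/\theta$, while equal pairs trivially stay equal.

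\textbf{Comparison.} Your approach is self-contained, needing only Mal'cev's description of principal congruences and Zorn's Lemma. It also makes transparent exactly how subdirect irreducibility of $\A$ is used: each nontrivial principal congruence contains the monolith, and this is witnessed by finitely many elements. The paper's approach is shorter and phrases the result as the structural inclusion $\K_\textsc{si} \subseteq \III\SSS\PPU(\K^*)$. That inclusion is in fact equivalent to your conclusion, because the models of the universal theory of a class $\mathsf{M}$ are exactly the members of $\III\SSS\PPU(\mathsf{M})$.

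Your handling of the trivial case is fine as well. Under the paper's definition trivial algebras are not subdirectly irreducible, and even if they were, a trivial $\A$ would itself lie in $\K^*$.
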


\begin{proof}
From  \cite[Prop.~2.16]{CKMIMP} it follows that $\K = \III \SSS \PPP \PPU(\K^*)$, and \cite[Lem.~1.5]{CD90} implies that $(\III \SSS \PPP \PPU(\K^*))_\textsc{si}\subseteq \III\SSS\PPU(\K^*)$. Thus, $\K_\textsc{si} \subseteq \III\SSS\PPU(\K^*)$. As the validity of universal formulas is preserved by $\III, \SSS$, and $\PPU$
(see, e.g., \cite[Thm.~V.2.20]{BuSa00}), we are done.
\end{proof}

Recall from \cref{Thm : classification of CMon-Vars} that every variety of commutative monoids is of the form $\V_{m, n}$ for some $m, n \in \mathbb{N}$. Furthermore, $\V_{0, n} = \mathsf{CM}$ for every $n \in \mathbb{N}$ by \cref{Prop : variety equalities}. Therefore, the next result applies to all the proper subvarieties of $\mathsf{CM}$, that is, the varieties of the form $\V_{m, n}$ with $m > 0$ (see \cref{Prop : variety equalities}).

\begin{Theorem} \label{Thm : char of SIs: inv or nil}
    Let $\A \in \V_{m,n}$ be subdirectly irreducible for some $m, n \in \mathbb{N}$ with $m > 0$. Then every $a \in A$ is either $n$-nilpotent or invertible with $a^{-1} = a^{m-1}$.
\end{Theorem}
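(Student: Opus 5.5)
The plan is to use the idempotent $e = a^{mn}$ and split according to whether $e$ is $0$ or $1$. First observe that in $\V_{m,n}$ the identity $x^{m+n} \thickapprox x^n$ yields $a^{n+km} = a^n$ for every $k \in \mathbb{N}$. Hence $e = a^{mn}$ is idempotent, since $e^2 = a^{2mn} = a^{mn}$; when $n = 0$ we have $e = 1$. More precisely, for any $N \geq n$ with $m \mid N$, the element $a^N$ is idempotent and equals $e$. The key step is then to show that a subdirectly irreducible commutative monoid $\A$ has no idempotents other than $1$ and possibly a zero element. Once this is established we argue as follows. If $e = 1$, then $a \cdot a^{m-1} = a^m$ and $a^{mn} = 1$. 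Using $a^{n+m} = a^n$ we get $a^m \cdot a^{n} = a^n$, and multiplying by $a^{mn-n}$ when $mn \geq n$ (which holds because $m>0$) gives $a^m \cdot a^{mn} = a^{mn}$, that is, $a^m = 1$. So $a$ is invertible with $a^{-1} = a^{m-1}$. If instead $e$ is the zero element $0$, then $a^n = a^{n}\cdot a^{mn} \cdot$ (suitable powers) $= a^{n+mn} = a^n a^{mn}$, and the last expression is a multiple of $0$, so it equals $0$. Thus $a$ is $n$-nilpotent.

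For the key step, let $e$ be idempotent with $e \neq 1$. I would consider two congruences. The first is $\theta_e$, defined by $\langle x,y\rangle \in \theta_e$ iff $ex = ey$; this is a congruence because the monoid is commutative. The second is the Rees-type congruence $\psi_e$ collapsing the ideal $eA$, that is, $\langle x,y\rangle \in \psi_e$ iff $x = y$ or $x,y \in eA$. I claim $\theta_e \cap \psi_e = \mathsf{id}_A$. Indeed, if $x \neq y$ are both in $eA$, then $ex = x$ and $ey = y$, so $ex = ey$ would force $x = y$. Subdirect irreducibility, in the form that $\mathsf{id}_A$ is meet irreducible in $\Con(\A)$, then gives $\theta_e = \mathsf{id}_A$ or $\psi_e = \mathsf{id}_A$.

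If $\theta_e = \mathsf{id}_A$, then $e \cdot e = e \cdot 1$ gives $e = 1$, contradicting the assumption. Hence $\psi_e = \mathsf{id}_A$, which means $eA$ is a singleton $\{e\}$. Then $ex = e$ for all $x$, so $e$ is the zero element, which is unique by \cref{Prop : basic properties}.

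The only delicate point is checking that $\psi_e$ is a congruence and that the intersection is trivial; both follow because $eA$ is an ideal fixed pointwise by multiplication with $e$. The exponent bookkeeping, namely choosing $N = mn$ (or any multiple of $m$ that is at least $n$), is routine.
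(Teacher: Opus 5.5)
Your proof is correct, and it takes a genuinely different and more elementary route than the paper.

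The paper first treats finitely generated subdirectly irreducible members of $\V_{m,n}$. By Mal'cev's theorem they are finite, and by Grillet's structure theory (\cref{Thm : Grillet finite SI}) each of their elements is nilpotent or invertible. The identity $x^{m+n} \thickapprox x^n$ then sharpens this to the universal sentence $\forall x,y\,((x^ny \approx x^n) \sqcup (x^m \approx 1))$. This sentence is transferred to all subdirectly irreducible members by \cref{Prop : universal sentences}, which rests on an ultraproduct argument.

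You bypass finiteness, Grillet's theorem and the transfer entirely. The hypothesis $m>0$ makes $e=a^{mn}$ idempotent. Your two congruences, $\theta_e$ and the Rees congruence of the ideal $eA$, meet in $\mathsf{id}_A$. This forces every idempotent of a subdirectly irreducible commutative monoid to be either $1$ or the zero element, after which the exponent bookkeeping finishes the proof.

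That idempotent lemma is exactly \cref{Prop : zeros preserved in extensions}, which the paper also derives from Grillet's theorem plus the universal-sentence transfer. So your argument gives a self-contained proof of that proposition too, valid in all of $\CMon_\textsc{si}$. Moreover, you only use that $\mathsf{id}_A$ is meet irreducible, so your proof applies verbatim to finitely subdirectly irreducible members.

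What the paper's route buys is a uniform pattern that it reuses elsewhere: verify a universal sentence on the finitely generated members using established structure theory, then transfer it. For this particular statement, your argument shows that this machinery is not needed.
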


In order to prove \cref{Thm : char of SIs: inv or nil}, we need
the following fact about the finitely generated members of $\CMon_\textsc{si}$.

\begin{Theorem}\label{Thm : Grillet finite SI}
Let $\A \in \CMon_\textsc{si}$ be finitely generated. Then $\A$ is finite and every element of $\A$ is either nilpotent or invertible.
\end{Theorem}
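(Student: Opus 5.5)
The plan has two stages: first show that $\A$ is finite, then show that in a subdirectly irreducible finite commutative monoid the only idempotents are $1$ and possibly a zero element. The dichotomy then follows at once: in a finite monoid, every $a \in A$ has some power $a^k$ with $k>0$ that is idempotent. If $a^k = 1$, then $a$ is invertible with inverse $a^{k-1}$. If $a^k$ is the zero element, then $a$ is nilpotent.

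For finiteness I will use Mal'cev's theorem that finitely generated commutative semigroups are residually finite; this also applies to the semigroup reduct $\A_\mathsf{s}$. Congruences of $\A$ and of $\A_\mathsf{s}$ coincide, because the constant $1$ imposes no extra condition. Let $\mu$ be the monolith of $\Con(\A)$, i.e.\ the least congruence different from $\mathsf{id}_A$, and pick $\langle c,d\rangle \in \mu$ with $c \neq d$. Residual finiteness gives a congruence $\theta$ with $A/\theta$ finite and $\langle c,d\rangle \notin \theta$. Then $\mu \not\subseteq \theta$, so $\theta = \mathsf{id}_A$, and hence $A$ is finite. This is the step I expect to be the main obstacle, since it relies on a nontrivial external theorem. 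Should that theorem be unavailable, I would instead invoke Grillet's structure theory of finitely generated subdirectly irreducible commutative semigroups \cite[Cor.\ IV.4.6]{Gri01} directly.

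For the idempotents, fix an idempotent $e \in A$ and consider two congruences. The first is the kernel $\alpha$ of the homomorphism $x \mapsto ex$ from $\A_\mathsf{s}$ to itself, i.e.\ $\alpha=\{\langle x,y\rangle : ex = ey\}$; this is a congruence by commutativity. The second is the Rees congruence $\beta$ of the ideal $eA$, which identifies all elements of $eA$ and nothing else.

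I claim $\alpha \cap \beta = \mathsf{id}_A$. Suppose $\langle x, y\rangle \in \alpha\cap\beta$ with $x \neq y$. Then $x,y \in eA$, so $x = ex$ and $y = ey$ because $e$ is idempotent. Together with $ex = ey$ this gives $x=y$, a contradiction.

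Since $\A$ is subdirectly irreducible, $\mathsf{id}_A$ is meet irreducible, so either $\alpha = \mathsf{id}_A$ or $\beta=\mathsf{id}_A$. In the first case, $e\cdot e = e \cdot 1$ yields $e = 1$. In the second case, $|eA| \leq 1$, so $eb = e$ for all $b$ and $e$ is a zero element.

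Applying this to the idempotent $a^k$ completes the argument as described in the first paragraph.
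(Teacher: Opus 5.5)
Your proof is correct. The finiteness half matches the paper's: the paper also passes to the semigroup reduct $\A_\mathsf{s}$, which is generated by $X \cup \{1\}$ and satisfies $\Con(\A) = \Con(\A_\mathsf{s})$, and then invokes Mal'cev's theorem. You additionally spell out the monolith argument that turns residual finiteness into finiteness.

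The second half takes a genuinely different route. The paper simply cites Grillet's structure theory of finite subdirectly irreducible commutative semigroups \cite[Cor.\ IV.4.6]{Gri01}. You instead prove directly that the only idempotents of a subdirectly irreducible commutative monoid are $1$ and a zero element. You do this with the congruences $\alpha = \{\langle x,y\rangle : ex = ey\}$ and the Rees congruence of $eA$, which meet in $\mathsf{id}_A$. Finiteness is then used only to produce an idempotent power $a^k$ with $k>0$.

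This makes the dichotomy self-contained and elementary, and it yields more. Your idempotent lemma never uses finite generation, so it proves \cref{Prop : zeros preserved in extensions} outright for every member of $\CMon_\textsc{si}$. In $\V_{m,n}$ with $m>0$, the power $a^k$ is idempotent whenever $k \geq \max(n,1)$ and $m \mid k$. So your lemma, combined with the computations already in the proof of \cref{Claim : fin SI members}, also gives \cref{Thm : char of SIs: inv or nil}. That route avoids both the finitely generated case and the transfer through \cref{Prop : universal sentences}.

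The paper's route is shorter on the page and connects the argument to Grillet's description, which the paper emphasizes. However, it hides the simple congruence-theoretic reason behind the dichotomy.
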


\begin{proof}
Let $\A_\mathsf{s}$ be the semigroup reduct of $\A$. Consider a finite set $X$ of generators of $\A$. Then $\A_\mathsf{s}$ is generated by $X \cup \{1\}$. Moreover, observe that $\Con(\A) = \Con(\A_\mathsf{s})$. Together with the assumption that $\A$ is subdirectly irreducible, this ensures that so is $\A_\mathsf{s}$. 
Consequently, $\A_\mathsf{s}$ is finitely generated and subdirectly irreducible. Hence, $\A$ is finite by \cite{Mal58} (see also \cite[Cor.~3.9]{Gri77}). Since $\A_\mathsf{s}$ is finite, \cite[Cor.\ IV.4.6]{Gri01} implies that every $a \in A$ is either nilpotent or invertible.
\end{proof}

We are now ready to prove \cref{Thm : char of SIs: inv or nil}.

\begin{proof}
We begin by proving the following.

\begin{Claim}\label{Claim : fin SI members}
    Let $\B \in \V_{m,n}$ be finitely generated and subdirectly irreducible for some $m, n \in \mathbb{N}$ with $m > 0$. Then every $b \in B$ is either $n$-nilpotent or invertible with $b^{-1} = b^{m-1}$.
\end{Claim}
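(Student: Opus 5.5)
By \cref{Thm : Grillet finite SI}, the finitely generated subdirectly irreducible monoid $\B$ is finite and each $b \in B$ is either nilpotent or invertible. The plan is to treat these two cases separately and in each case use the defining equation $x^{m+n} \thickapprox x^n$ of $\V_{m,n}$ to sharpen the conclusion.

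\emph{Invertible case.} Suppose $b$ is invertible. By \cref{Prop : basic properties}\ref{item : basic properties : 4}, $b$ is cancellative. Since $\B \in \V_{m,n}$, we have $b^{m+n} = b^n$, that is, $b^n \cdot b^m = b^n \cdot 1$. Cancelling $b$ a total of $n$ times (or multiplying by $(b^{-1})^n$) yields $b^m = 1$. Because $m > 0$, this gives $b \cdot b^{m-1} = 1$, and uniqueness of inverses (\cref{Prop : basic properties}\ref{item : basic properties : 2}) gives $b^{-1} = b^{m-1}$.

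\emph{Nilpotent case.} Suppose $b^k$ is the zero element $0$ for some $k$. Iterating the identity $b^{n} = b^{n+m}$ gives $b^n = b^{n+jm}$ for every $j \in \mathbb{N}$. Choose $j$ with $n + jm \geq k$, which is possible since $m > 0$. Then $b^{n+jm} = b^k b^{n+jm-k} = 0$, because $0$ absorbs everything. Hence $b^n = 0$, so $b$ is $n$-nilpotent.

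One degenerate situation needs a check: $n = 0$ with $b$ nilpotent. In that case $b^0 = 1$ equals the zero element, so $\B$ is trivial, and "$0$-nilpotent" is consistent with this. No separate argument is needed, since the computation above handles it uniformly.

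There is no real obstacle here; the substantive input is Grillet's dichotomy from \cref{Thm : Grillet finite SI}. The only point requiring care is the use of $m > 0$ to push exponents beyond $k$ in the nilpotent case.
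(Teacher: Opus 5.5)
Your proof is correct and follows the paper's argument essentially step for step. You invoke Grillet's dichotomy (\cref{Thm : Grillet finite SI}). In the invertible case you cancel $b^n$ from $b^{m+n} = b^n$ to get $b^m = 1$, and in the nilpotent case you iterate $b^n = b^{n+jm}$ past the nilpotency index $k$, which is possible because $m > 0$. The differences are cosmetic: you handle $k \leq n$ uniformly by taking $j = 0$, where the paper treats it as a separate case, and you make the appeal to uniqueness of inverses explicit.
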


\begin{proof}[Proof of the Claim]
As $\B$ is a finitely generated member of $\CMon_\textsc{si}$, every element of $\B$ is either nilpotent or invertible by \cref{Thm : Grillet finite SI}. 
 Consider $b \in B$. We have two cases depending of whether $b$ is nilpotent or invertible. First, suppose that $b$ is nilpotent.  Then there exists $k \in \mathbb{N}$ such that $b^k$ is a zero element of $\B$.
We will show that $b$ is $n$-nilpotent. If $k \leq n$, this follows immediately from the assumption that $b$ is $k$-nilpotent.
If $k > n$, let
\[
t = \min \{ r \in \mathbb{N} : k \leq n+rm \},
\]
which exists because $m > 0$ by assumption. Then
\[
b^n = b^{n+rm} = b^{n + rm-k}b^k = b^k,
\]
where the first equality holds because $\B \in \V_{m, n}$ by assumption and $\V_{m, n} \vDash x^{m+n} \thickapprox x^n$ by definition, the second is straightforward, and the third holds because $b^k$ is a zero element. Consequently, $b^n$ is a zero element, whence $b$ is $n$-nilpotent.

Next, we consider the case where $b$ is invertible. As $\B \in \V_{m, n}$ and $\V_{m, n} \vDash x^{m+n} \thickapprox x^n$, we have $b^{m+n} = b^n = 1b^n$. Since $b$ is cancellative by \cref{Prop : basic properties}(\ref{item : basic properties : 4}), we obtain $b^m = 1$. 
Together with $m > 0$, this yields that $b$ is invertible with $b^{-1} = b^{m-1}$.
\end{proof}

From \cref{Claim : fin SI members} it follows that the universal formula
\[
\varphi = \forall x,y\, ((x^ny \approx x^n) \sqcup (x^m \approx 1))
\]
is valid in the finitely generated subdirectly irreducible members of $\V_{m, n}$. Therefore, $\varphi$ holds in every subdirectly irreducible member of $\V_{m, n}$ by \cref{Prop : universal sentences}. As $\A$ is a subdirectly irreducible member of $\V_{m, n}$ by assumption, we conclude that every $a \in A$ is either $n$-nilpotent or invertible with $a^{-1} = a^{m-1}$. 
\end{proof}

Lastly, we will make use of the next observation.

\begin{Proposition} \label{Prop : zeros preserved in extensions}
Let $\A \in \CMon_{\textsc{si}}$ and $a \in A$ such that $a=a^2$. Then   $a=1$ or $a$ is a zero element of $\A$.
\end{Proposition}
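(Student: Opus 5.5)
Since $\A$ is commutative and $a$ is idempotent, the set $aA = \{ab : b \in A\}$ is an ideal, and I will use the associated congruence to split $\A$ in two. I would build two congruences whose intersection is the identity and then invoke subdirect irreducibility.

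\emph{Step 1.} Define
\[
\theta = \{\langle b,c\rangle \in A^2 : ab = ac\}.
\]
This is a congruence because multiplication by $a$ is a monoid endomorphism of the semigroup reduct: if $ab=ac$, then $a(bd) = a(cd)$.

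\emph{Step 2.} Define
\[
\psi = \mathsf{id}_A \cup (aA \times aA),
\]
the Rees congruence of the ideal $aA$. It is an equivalence relation, and it is compatible because $aA$ is an ideal: if $b,c \in aA$, then $bd, cd \in aA$.

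\emph{Step 3.} I claim that $\theta \cap \psi = \mathsf{id}_A$. Take $\langle b,c\rangle \in \theta \cap \psi$ with $b \neq c$. Then $b,c \in aA$, say $b = ab'$ and $c = ac'$. Idempotency gives $ab = a^2b' = ab' = b$, and likewise $ac = c$. Since $\langle b,c\rangle \in \theta$, we have $ab = ac$, hence $b = c$, a contradiction.

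\emph{Step 4.} Because $\mathsf{id}_A$ is (completely) meet irreducible in $\Con(\A)$, either $\theta = \mathsf{id}_A$ or $\psi = \mathsf{id}_A$.
\begin{itemize}
\item If $\theta = \mathsf{id}_A$, then $a \cdot a = a = a \cdot 1$ gives $\langle a,1\rangle \in \theta$, so $a = 1$.
\item If $\psi = \mathsf{id}_A$, then $aA$ has at most one element. Since $a = a \cdot 1 \in aA$, we get $aA = \{a\}$, i.e. $ab = a$ for every $b$. So $a$ is a zero element.
\end{itemize}

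The only point needing care is Step 3, which uses that $a$ acts as the identity on $aA$. I do not expect any real obstacle, and none of the earlier results are needed beyond the definition of subdirect irreducibility.
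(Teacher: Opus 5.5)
Your proof is correct, but it takes a different route from the paper. The paper does not argue directly with congruences. It first treats finitely generated $\A$ using \cref{Thm : Grillet finite SI}, which combines Mal'cev's finiteness theorem with Grillet's result that every element is nilpotent or invertible. In that case an invertible idempotent equals $1$, and a nilpotent idempotent $a$ satisfies $a = a^n$ with $a^n$ a zero, so $a$ is a zero. The paper then transfers the universal sentence $\forall x,y\,(x\approx x^2 \to (x\approx 1 \sqcup xy\approx x))$ to all of $\CMon_{\textsc{si}}$ via \cref{Prop : universal sentences}.

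You instead work inside $\A$ itself. The kernel $\theta$ of $x \mapsto ax$ and the Rees congruence $\psi$ of the principal ideal $aA$ meet in $\mathsf{id}_A$, because $a$ acts as the identity on $aA$. So one of them is trivial, giving $a = 1$ or $aA = \{a\}$ respectively.

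Your argument is elementary and self-contained. It also only uses that $\mathsf{id}_A$ is meet irreducible in $\Con(\A)$, not completely meet irreducible. Hence it proves the statement for every commutative monoid in which the identity congruence is meet irreducible, i.e.\ every finitely subdirectly irreducible one. The paper's route is cheap in context only because \cref{Thm : Grillet finite SI} and \cref{Prop : universal sentences} are needed anyway for \cref{Thm : char of SIs: inv or nil}.

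One cosmetic point: $x \mapsto ax$ is a semigroup endomorphism, but not a monoid endomorphism unless $a = 1$. This does not affect the proof. The implication $ab = ac \Rightarrow abd = acd$ that you display is all that is needed, and it holds for any $a$.
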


\begin{proof}
Suppose first that $\A$ is finitely generated. \cref{Thm : Grillet finite SI} implies that $a$ is invertible or nilpotent. If $a$ is invertible, then $a=a^2 a^{-1} = a a^{-1} = 1$, where the second equality holds because $a=a^2$ and the others are straightforward. If $a$ is nilpotent, then $a^n$ is a zero element for some $n >0$. Since $a=a^2$, we obtain that $a=a^n$, and so $a$ is a zero element. Therefore, the statement holds for every finitely generated member of $\mathsf{CM}_\textsc{si}$. Together with \cref{Prop : universal sentences}, this ensures that the universal formula
\[
\varphi = \forall x,y\, (x\approx x^2 \to (x\approx 1 \sqcup xy\approx x))
\]
is valid in every member of $\CMon_{\textsc{si}}$. Thus, for all $\A \in \CMon_{\textsc{si}}$ and $a \in A$ such that $a=a^2$, the validity of $\varphi$ implies that  $a=1$ or $a$ is a zero element of $\A$.
\end{proof}

\subsection{Inverse  monoids}

We recall that a monoid (resp.\ semigroup) $\A$ is said to be \emph{inverse} when for every $a \in A$ there exists a unique $b \in A$ such that
\[
a = aba\, \, \text{ and } \, \, b = bab
\]
(see, e.g., \cite[p.\ 242]{Isb65}).
We say that a  variety of monoids is \emph{inverse} when so are its members. Under the assumption of commutativity, the definition of an inverse monoid can be simplified as follows (see, e.g., \cite[p.\ 242]{Isb65}  and \cite[Lem.~4]{RapregR}).
    \begin{Proposition} \label{Prop : inverse condition for CMons}
        A commutative monoid $\A$ is  inverse if and only if for every $a \in A$ there exists $b \in A$ such that $a = a^2b$.
    \end{Proposition}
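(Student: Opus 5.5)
We prove both directions of \cref{Prop : inverse condition for CMons} directly from the definition of an inverse monoid, using commutativity throughout.

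For the forward direction, assume $\A$ is inverse and let $a \in A$. By definition there is $b \in A$ with $a = aba$. Commutativity rewrites $aba$ as $a^2 b$, so $a = a^2 b$. This direction is immediate.

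For the converse, assume that for every $a \in A$ there is $b \in A$ with $a = a^2 b$. Fix $a$ and such a $b$. We must exhibit an element $c$ with $a = aca$ and $c = cac$, and then show that $c$ is unique.

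\emph{Existence.} Put $c = ab^2$.
\benroman
\item First, $aca = a^2 \cdot ab^2 = (a^2 b)(ab) = a \cdot ab = a^2 b = a$.
\item Next, $cac = a^3 b^4 = (a^2 b)(ab^3) = a^2 b^3 = (a^2 b) b^2 = ab^2 = c$.
\eroman

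\emph{Uniqueness.} This is the only real step; we use the standard argument for commutative regular semigroups, where idempotents commute. Suppose $c$ and $d$ both satisfy the two equations for $a$. Then $ac$ and $ad$ are idempotents, and
\[
ac = (ada)c = (ad)(ac) = (ac)(ad) = (aca)d = ad.
\]
Therefore
\[
c = cac = c(ac) = c(ad) = (ca)d = (ad)d = d(ad) = dad = d,
\]
where $ca = ac = ad$ by commutativity. Hence $c = d$, and $\A$ is inverse.
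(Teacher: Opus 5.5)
Your proof is correct and complete. Both verifications for the witness $c = ab^2$ check out, and the uniqueness argument ($ac = ad$ as commuting idempotents, hence $c = cac = (ad)d = d$) is sound.

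The paper does not prove this proposition; it cites \cite[p.\ 242]{Isb65} and \cite[Lem.~4]{RapregR}. Your construction $ab^2$ and the idempotent argument are the standard reasoning behind those references, so you have given a self-contained proof of what the paper leaves to the literature.
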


    The inverse varieties of commutative monoids admit the following transparent description.

    \begin{Theorem} \label{Thm : char of inverse monoids}
        The following conditions are equivalent for a variety $\K$ of commutative monoids:
        \benroman
        \item\label{item : inverse : 1} $\K$ is inverse;
        \item\label{item : inverse : 2} $\K$ does not contain $\mathsf{C}_2$;
        \item\label{item : inverse : 3} $\K \vDash x^{n+1} \thickapprox x$ for some $n > 0$.
        \eroman
    \end{Theorem}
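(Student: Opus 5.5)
We prove the cycle (i)$\Rightarrow$(ii)$\Rightarrow$(iii)$\Rightarrow$(i), using the classification in \cref{Thm : classification of CMon-Vars} and the criterion for inverse commutative monoids in \cref{Prop : inverse condition for CMons}.

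For (i)$\Rightarrow$(ii) we argue by contraposition. Suppose $\mathsf{C}_2 \subseteq \K$. Consider the monoid $\B = \langle \mathbb{N}; +, 0\rangle/\theta$, where $\langle a,b\rangle \in \theta$ if and only if $a=b$ or $2 \leq a,b$; this is the monoid used in the proof of \cref{Prop : variety equalities}. It has elements $1, x, 0$ with $x^2 = 0$ a zero element, and it satisfies $x^2 \thickapprox x^3$. Hence $\B \in \mathsf{C}_2 \subseteq \K$. In $\B$ the element $x$ admits no $b$ with $x = x^2 b$, since $x^2 b = 0 \neq x$. By \cref{Prop : inverse condition for CMons}, $\B$ is not inverse, so $\K$ is not inverse.

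For (ii)$\Rightarrow$(iii), write $\K = \V_{m,n}$ by \cref{Thm : classification of CMon-Vars}, so that $\K = \VVV(\mathsf{A}_m \cup \mathsf{C}_n)$ by \cref{Prop : variety equalities}.
\begin{itemize}
\item If $m = 0$, then $\K = \mathsf{CM} \supseteq \mathsf{C}_2$, which is excluded by (ii). Hence $m>0$.
\item If $n \geq 2$, then $\mathsf{C}_2 \subseteq \mathsf{C}_n \subseteq \K$, because $x^2 \thickapprox x^3$ implies $x^n \thickapprox x^{n+1}$. This is again excluded, so $n \leq 1$.
\end{itemize}
Therefore $\K \vDash x^{m+n} \thickapprox x^n$ with $n \in \{0,1\}$. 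If $n = 1$, this is $x^{m+1} \thickapprox x$ with $m>0$, as desired. If $n = 0$, then $\K \vDash x^m \thickapprox 1$, and multiplying both sides by $x$ gives $x^{m+1} \thickapprox x$.

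For (iii)$\Rightarrow$(i), given $\A \in \K$ and $a \in A$, let $b = a^{n-1}$, which makes sense since $n>0$. Then $a^2 b = a^{n+1} = a$. So $\A$ is inverse by \cref{Prop : inverse condition for CMons}.

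The step requiring the most care is (ii)$\Rightarrow$(iii). It depends on Head's classification via \cref{Thm : classification of CMon-Vars} and on checking the inclusion $\mathsf{C}_2 \subseteq \mathsf{C}_n$ for $n \geq 2$, together with the boundary cases $m = 0$ and $n = 0$. The rest is a direct computation.
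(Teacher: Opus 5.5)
Your proof is correct. Your arguments for (i)$\Rightarrow$(ii) and (iii)$\Rightarrow$(i) coincide with the paper's. Your $\langle \mathbb{N};+,0\rangle/\theta$ is exactly the three-element monoid $\{1,a,b\}$ with $b=a^2$ a zero element used there.

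The difference lies in the middle step. The paper proves (ii)$\Leftrightarrow$(iii) in one stroke by quoting the description of the lattice of varieties of commutative monoids from \cite{GLVVCMon}: $\mathsf{C}_2 \nsubseteq \K$ if and only if $\K \subseteq \V_{n,1}$ for some $n>0$. You instead prove only (ii)$\Rightarrow$(iii) and close the cycle through (i). For that implication you derive the needed lattice fact yourself, starting from the bare form $\K=\V_{m,n}$ given by \cref{Thm : classification of CMon-Vars}. Your case analysis has three parts: $m=0$ forces $\K=\CMon$, $n\geq 2$ forces $\mathsf{C}_2\subseteq\K$, and $x^m\thickapprox 1$ implies $x^{m+1}\thickapprox x$.

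What your route buys is self-containment: the only external input is the classification theorem already stated in the paper, and the argument works for whichever pair $(m,n)$ represents $\K$. The paper's route is shorter but rests on a finer structural fact about the subvariety lattice that it cites rather than spells out.

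A minor simplification: you do not need $\K=\VVV(\mathsf{A}_m\cup\mathsf{C}_n)$. For $n\geq 2$ the equation $x^2\thickapprox x^3$ directly entails $x^{m+n}\thickapprox x^n$, so $\mathsf{C}_2\subseteq\V_{m,n}$.
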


\begin{proof}
(\ref{item : inverse : 1})$\Rightarrow$(\ref{item : inverse : 2}): Suppose, with a view to contradiction, that $\K$ is inverse and contains $\mathsf{C}_2$.
Then consider the unique  commutative monoid $\A$ such that $A = \{ 1, a, b\}$, $b$ is a zero element, and $b = a^2$. 
As $\A$ satisfies the equation $x^{2} \thickapprox x^3$, it belongs to $\mathsf{C}_2$. Together with the assumption that $\mathsf{C}_2 \subseteq \K$, this yields $\A \in \K$. Therefore, $\A$ is inverse. By \cref{Prop : inverse condition for CMons} there exists $c \in A$ such that $a = a^2 c$. Since $b = a^2$ is a zero element, this yields $a = bc = b$, which is false. 

   (\ref{item : inverse : 2})$\Leftrightarrow$(\ref{item : inverse : 3}):  From the description of the lattice of varieties of commutative monoids (see e.g., \cite[Thm.\ 5.1]{GLVVCMon})
   it follows that for every variety $\K \subseteq \mathsf{CM}$ we have  $\mathsf{C}_2 \nsubseteq \K$  if and only if  $\K \subseteq \V_{n,1}$ for some $n>0$, but the latter condition is equivalent to $\K \vDash x^{n+1} \thickapprox x$ for some $n > 0$ by the definition of $\V_{n, 1}$.
  
(\ref{item : inverse : 3})$\Rightarrow$(\ref{item : inverse : 1}): Consider $\A \in \mathsf{K}$ and $a \in A$. Since $a^{n+1} = a$ by  \eqref{item : inverse : 3}, we obtain $a = a^2b$ for $b = a^{n-1}$. Hence, $\A$ is inverse  by \cref{Prop : inverse condition for CMons}, and so is $\mathsf{K}$.
\end{proof}

The next result of Howie and Isbell will be used later (see \cite[Thm.\ 2.3]{HoIsEpiII}).

\begin{Theorem}\label{Thm : inverse semigroups : absolutely closed}
For all $\A \leq \B \in \mathsf{S}$ with $\A$ inverse we have $\mathsf{d}_\mathsf{S}(\A, \B) = A$.
\end{Theorem}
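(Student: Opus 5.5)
This is the Howie–Isbell theorem: inverse semigroups are absolutely closed, so every inverse subsemigroup equals its dominion in any semigroup. My plan is to derive it from Isbell's Zigzag Theorem (\cref{Thm : zigzag original}) for the variety $\mathsf{S}$.

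Fix $\A \leq \B \in \mathsf{S}$ with $\A$ inverse, and let $d \in \mathsf{d}_\mathsf{S}(\A, \B)$. If $d \notin A$, the Zigzag Theorem gives a zigzag of length $m$ over $\A$ with value $d$:
\[
d = a_0 y_1,\quad a_0 = x_1 a_1,\quad a_{2i-1}y_i = a_{2i} y_{i+1},\quad x_i a_{2i} = x_{i+1} a_{2i+1},\quad a_{2m-1} y_m = a_{2m},\quad x_m a_{2m} = d,
\]
with $a_j \in A$ and $x_i, y_i \in B$. Howie and Isbell's argument avoids analysing such a zigzag directly, and I would follow it. Since absolute closure is preserved by taking a larger ambient semigroup, it suffices to prove that $\A$ is closed in every $\B$. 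The strategy is to embed $\B$ into a semigroup in which $\A$ is separated by an explicit pair of homomorphisms.

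The plan is to use the Vagner--Preston representation. Embed $\A$ into the symmetric inverse monoid $\mathcal{I}(A)$. Then extend $\B$ to the amalgamated free product of two copies of $\B$ over $\A$, namely $\B *_\A \B$, built inside a semigroup of partial maps. The two canonical maps $\B \to \B *_\A \B$ agree on $A$. They separate every $b \in B - A$, because inverse semigroups form a strong amalgamation base in $\mathsf{S}$ when one of the embeddings is into an arbitrary semigroup. That is exactly the statement $\mathsf{d}_\mathsf{S}(\A, \B) = A$.

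The key steps, in order, are as follows.
\begin{enumerate}[(1)]
\item Show that special amalgams $(\B, \B; \A)$ with $\A$ inverse embed strongly. I would do this via a normal form for words alternating between $B - A$ in the two copies, using idempotents $aa^{-1}$ to absorb elements of $A$ at the boundaries.
\item Check that the two canonical embeddings agree on $A$.
\item Conclude that for $b \notin A$ the two images differ, so $b \notin \mathsf{d}_\mathsf{S}(\A, \B)$.
\end{enumerate}

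The main obstacle is step (1), the normal form and strongness of the amalgam. Inverses are essential here: they allow every element $a \in A$ to be split as $a = aa^{-1}a$. That splitting lets us push $A$-factors to a canonical side. Without inverses this pushing fails, and that failure is exactly the phenomenon the zigzag equations detect.

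If I wanted to argue directly from the zigzag instead, I would try to show by induction on $i$ that $x_i a_{2i} \in A$. For the base case, $x_1 a_1 = a_0$. For the inductive step, I would insert $a_{2i}a_{2i}^{-1}$ and use $y$-side equalities to rewrite $x_{i+1}a_{2i+1}$. However, I expect the bookkeeping there to break, which is why I would follow the amalgamation route.
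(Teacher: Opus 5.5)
The paper gives no proof of this statement; it quotes it from Howie and Isbell \cite[Thm.~2.3]{HoIsEpiII}. Your proposal has a genuine gap: its only substantive step, (1), is the theorem itself restated, and you give no argument for it. For $\A \leq \B$, the fold map $\B \ast^{\mathsf{S}}_\A \B \to \B$ induced by two copies of the identity shows two things. First, the pushout maps $p_1, p_2$ are injective. Second, $p_1(b) = p_2(b')$ forces $b = b'$. Hence the special amalgam $(\B, \B; \A)$ is strong exactly when $p_1(b) \neq p_2(b)$ for all $b \in B - A$. By \cref{Thm : dominions = pushouts}, that is exactly $\mathsf{d}_\mathsf{S}(\A, \B) = A$. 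So steps (2) and (3) are trivial, and quoting ``inverse semigroups are strong amalgamation bases'' is circular.

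The normal-form plan also does not get off the ground. The standard normal-form technique for semigroup amalgams needs the core to be unitary, and an inverse subsemigroup need not be. Take the chain $0 < p < 1$ under meet and $A = \{0,1\}$: then $0p = 0 \in A$ but $p \notin A$. Here elements of $A$ are absorbed into elements of $B - A$ in both copies in ways that splitting $a = aa^{-1}a$ does not control, and describing exactly this collapse is what the zigzag equations do. The Vagner--Preston embedding represents only $\A$; extending it to a representation of $\B$ is the hard part, and you do not address it. Finally, your reduction ``it suffices to prove that $\A$ is closed in every $\B$'' just repeats the statement.

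The direct zigzag route you set aside does work, but it needs a different invariant. Your proposed hypothesis $x_i a_{2i} \in A$ is false in general. In the example above, $d = 0$ has the zigzag $a_0 = a_1 = 0$, $a_2 = a_3 = 1$, $a_4 = 0$, $x_1 = x_2 = p$, $y_1 = 1$, $y_2 = 0$, yet $x_1 a_2 = p \notin A$.

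Instead, put $q_0 = a_0$ and $q_k = a_0 a_1^{-1} a_2 a_3^{-1} \cdots a_{2k-1}^{-1} a_{2k} \in A$. You only need two facts: the idempotents of $\A$ commute, and $ea = a(a^{-1}ea)$ for every idempotent $e$ of $\A$. Induction on $k$, using $x_k a_{2k} = x_{k+1} a_{2k+1}$, gives $q_k = x_k a_{2k} \epsilon_k$ for some idempotent $\epsilon_k \in A$. Start from $q_1 = x_1 (a_1 a_1^{-1}) a_2$. For the inductive step, $q_{k+1} = x_{k+1} (a_{2k+1} \epsilon_k a_{2k+1}^{-1}) a_{2k+2}$, and the bracket is idempotent, so it can be moved past $a_{2k+2}$. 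Commuting idempotents and using $x_k a_{2k} = x_{k+1} a_{2k+1}$ twice then gives, for $1 \leq k \leq m-1$,
\[
q_k a_{2k+1}^{-1} a_{2k+1} = x_k a_{2k} (a_{2k+1}^{-1} a_{2k+1}) \epsilon_k = x_{k+1} a_{2k+1} \epsilon_k = q_k .
\]
Also $q_0 a_1^{-1} a_1 = q_0$, because $a_0 = x_1 a_1$. Therefore the equality $d = q_k y_{k+1}$ propagates along the zigzag:
\[
q_k y_{k+1} = q_k a_{2k+1}^{-1} a_{2k+1} y_{k+1} = q_{k+1} y_{k+2}.
\]
At the last step, $d = q_{m-1} a_{2m-1}^{-1} a_{2m-1} y_m = q_{m-1} a_{2m-1}^{-1} a_{2m} = q_m$. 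So $d = a_0 a_1^{-1} a_2 \cdots a_{2m-1}^{-1} a_{2m} \in A$, with no amalgamation theory at all.
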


\section{Isbell's Zigzag Theorem}

For each $n \geq 1$ let
\[
\psi_n(z_1, \dots, z_n, w_1, \dots, w_n, x_1, \dots, x_{2n+1}, y)
\]
be the conjunction of the following equations in the language of semigroups:
\begin{align}
y &\thickapprox x_1 z_1;\label{Isbell : 1 : formula}\\
x_1 & \thickapprox w_1 x_2;\label{Isbell : 2 : formula}\\
x_{2i}z_i & \thickapprox x_{2i+1}z_{i+1} \text{ for }i = 1, \dots, n-1;\label{Isbell : 3 : formula}\\
w_i x_{2i+1} & \thickapprox w_{i+1}x_{2(i+1)} \text{ for }i = 1, \dots, n-1;\label{Isbell : 4 : formula}\\
x_{2n}z_n & \thickapprox x_{2n+1};\label{Isbell : 5 : formula}\\
w_n x_{2n+1}& \thickapprox y.
\label{Isbell : 6 : formula}
\end{align}
Then let $\varphi_0(x,y) = \psi_0(x, y) = x \thickapprox y$ and for each $n \geq 1$,
\[
\varphi_n(x_1, \dots, x_{2n+1}, y) = \exists z_1, \dots, z_n, w_1, \dots, w_n \psi_n(z_1, \dots, z_n, w_1, \dots, w_n, x_1, \dots, x_{2n+1}, y).
\]
We refer to $\varphi_n$ as to the $n$-th \emph{Isbell's formula}.

The following description of dominions is known as Isbell's Zigzag Theorem (see \cite[Thm.\ 2.3]{Isb65},  \cite[Thm.\ 1.1]{HoIsEpiII}, and \cite[Thm.\ 1.2]{HowZigzag}).

\begin{Theorem} \label{Thm : zigzag original}
Let $\mathsf{K} \in \{ \mathsf{S}, \mathsf{CS}, \mathsf{M} \}$, $\A \leq \B \in \mathsf{K}$, and $b \in B$. Then $b \in \mathsf{d}_{\mathsf{K}}(\A, \B)$ if and only if
    there exist $n \in \mathbb{N}$ and $a_1, \dots, a_{2n+1} \in A$ such that $\B \vDash \varphi_n(a_1, \dots, a_{2n+1}, b)$. 
\end{Theorem}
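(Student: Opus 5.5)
The plan is to prove the two directions separately. Soundness (the zigzag forces domination) is a direct computation. Completeness (domination forces a zigzag) is the hard part; I would reduce it to the free amalgamated product (pushout) of $\B$ with itself over $\A$, as in Isbell's original approach.

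\emph{Soundness.} Assume $\B \vDash \psi_n(c_1,\dots,c_n,d_1,\dots,d_n,a_1,\dots,a_{2n+1},b)$ with witnesses $c_i,d_i \in B$. Take $g,h\colon \B\to\C$ with $\C \in \mathsf{K}$ and $g{\upharpoonright}_A = h{\upharpoonright}_A$. Using the equations \eqref{Isbell : 1 : formula}--\eqref{Isbell : 6 : formula}, I would telescope. First, $g(b)=g(a_1)g(c_1)=h(a_1)g(c_1)=h(d_1)h(a_2)g(c_1)=h(d_1)g(a_2c_1)$. Next, \eqref{Isbell : 3 : formula} rewrites $a_2c_1$ as $a_3c_2$, and \eqref{Isbell : 4 : formula} lets us move $h(d_1)h(a_3)$ to $h(d_2)h(a_4)$. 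Iterating this until \eqref{Isbell : 5 : formula} and \eqref{Isbell : 6 : formula} apply gives $h(d_n)h(a_{2n+1})=h(b)$. This uses only associativity, so it works in $\mathsf{S}$, $\mathsf{CS}$ and $\mathsf{M}$ alike. The case $n=0$ is trivial.

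\emph{Completeness.} Suppose $b\in\mathsf{d}_\mathsf{K}(\A,\B)$. Form the pushout $\B\ast_\A\B$ in $\mathsf{K}$ with its two canonical maps $g,h$. Then $g(b)=h(b)$. The pushout is the quotient of the coproduct $\B\ast\B$ by the congruence generated by the pairs $(\iota_1(a),\iota_2(a))$ for $a\in A$. The equality $g(b)=h(b)$ therefore yields a finite chain of elementary transitions from $\iota_1(b)$ to $\iota_2(b)$, each replacing a factor $\iota_1(a)$ by $\iota_2(a)$ or conversely inside a word. One has to trace how the ``boundary'' between left-copy and right-copy letters moves along this chain. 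Following Isbell (or Storrer's/Howie's tensor product formulation), one extracts a minimal chain and reads off elements $a_i\in A$, $c_i,d_i\in B$ satisfying $\psi_n$. A cleaner route is Howie's: consider the $\A$-bimodule (tensor product) $B\otimes_A B$ and show $b\otimes 1 = 1\otimes b$ there. In the semigroup case one adjoins an identity to $\B$ first. Then use the explicit description of equality in a tensor product of sets with actions as a zigzag of elementary moves. That description yields exactly the system \eqref{Isbell : 1 : formula}--\eqref{Isbell : 6 : formula}.

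For $\mathsf{M}$ and $\mathsf{CS}$, I would reduce to the case $\mathsf{S}$. For $\mathsf{CS}$, I would show that the semigroup dominion already lies in the commutative one and coincides with it. Since commutative targets are fewer, we have $\mathsf{d}_\mathsf{S}\subseteq \mathsf{d}_\mathsf{CS}$. For the converse, run the tensor argument with commutative actions, where the same zigzag description holds. For $\mathsf{M}$, the monoid pushout is the semigroup pushout with identities identified, and the zigzag transfers.

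The main obstacle is the combinatorial extraction step: turning an arbitrary derivation of $g(b)=h(b)$ in the amalgam into the rigid alternating shape $\psi_n$. My first attempt there is Howie's tensor-product proof, making the dominion condition equivalent to $b\otimes1=1\otimes b$ in $B\otimes_A B$.
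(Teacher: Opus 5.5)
The paper does not prove this statement. It is imported from \cite[Thm.\ 2.3]{Isb65}, \cite[Thm.\ 1.1]{HoIsEpiII} and \cite[Thm.\ 1.2]{HowZigzag} and used as a black box. Measured against those sources, your outline is the standard one, but the $\mathsf{CS}$ case has a gap as you describe it.

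\emph{What is fine.} Your soundness computation is correct and complete. It uses only associativity, so it covers $\mathsf{S}$, $\mathsf{CS}$ and $\mathsf{M}$ at once. Your reduction of $\mathsf{M}$ to $\mathsf{S}$ also works: the argument of \cref{Prop : dom computed in CSG vs CMon} goes through without commutativity, because $g(1)=h(1)$ acts as a two-sided identity on every alternating product $g(b_1)h(b_2)\cdots$.

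\emph{What is only cited.} For completeness you name, but do not carry out, the two ingredients that hold all the content.
\begin{itemize}
\item Stenstr\"om's criterion: $b$ in the dominion forces $b\otimes 1=1\otimes b$ in $B^1\otimes_{A^1}B^1$. This needs an explicit separating pair.
\item The passage from a scheme witnessing $1\otimes b=b\otimes 1$ to $\psi_n$ with $a_i\in A$ and witnesses in $B$ rather than $A^1$, $B^1$. This needs a minimal-length argument that removes occurrences of the adjoined identity.
\end{itemize}
The correspondence itself is clear once you note that $\psi_n$ is exactly the chain $1\otimes b=a_1\otimes c_1=d_1a_2\otimes c_1=d_1\otimes a_2c_1=d_1\otimes a_3c_2=\dots=d_n\otimes a_{2n+1}=b\otimes 1$. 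So this part is a pointer to the literature, just as in the paper.

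\emph{The gap: the $\mathsf{CS}$ case.} The nontrivial inclusion there is $\mathsf{d}_{\mathsf{CS}}(\A,\B)\subseteq\mathsf{d}_{\mathsf{S}}(\A,\B)$. ``Running the tensor argument with commutative actions'' does not give it, because the separating pair behind Stenstr\"om's criterion lives in a noncommutative semigroup.
\begin{itemize}
\item For instance, take $\beta(s)=(s,0,s)$ and $\gamma(s)=(s,s\otimes 1-1\otimes s,s)$ in the triangular semigroup with $(s,x,t)(s',x',t')=(ss',sx'+xt',tt')$, over the free abelian group on $B^1\otimes_{A^1}B^1$.
\item Even the subsemigroup generated by $\beta[B]\cup\gamma[B]$ is in general not commutative. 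Indeed, $\beta(s)\gamma(t)$ and $\gamma(t)\beta(s)$ have middle entries $st\otimes 1-s\otimes t$ and $t\otimes s-1\otimes st$.
\end{itemize}
So that construction says nothing about $\mathsf{CS}$-dominions.

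\emph{The fix.} Adjoin an external identity, so that $\mathsf{d}_{\mathsf{CS}}(\A,\B)=\mathsf{d}_{\mathsf{CM}}(\A^1,\B^1)\cap B$.
\begin{itemize}
\item In $\mathsf{CM}$ the coproduct of $\B^1$ with itself is $B^1\times B^1$ with componentwise product.
\item In a commutative monoid, the congruence generated by a relation is the equivalence generated by its translates. Hence the congruence generated by the pairs $\langle(a,1),(1,a)\rangle$ with $a\in A^1$ is the equivalence generated by $(sa,t)\sim(s,at)$.
\item Therefore the $\mathsf{CM}$-pushout is literally $B^1\otimes_{A^1}B^1$. By \cref{Thm : dominions = pushouts}, $b\in\mathsf{d}_{\mathsf{CS}}(\A,\B)$ if and only if $b\otimes 1=1\otimes b$.
\end{itemize}
This is the same criterion as for $\mathsf{S}$, and from here the zigzag extraction is identical. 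No separating construction is needed in this case.
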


The aim of this section is to extend Isbell's Zigzag Theorem to all varieties of commutative monoids. More precisely, we will prove the following.

\begin{Theorem} \label{Thm : Doms in CMon Vars}
Let $\mathsf{K}$ be a variety of commutative monoids, $\A \leq \B \in \mathsf{K}$, and $b \in B$. Then $b \in \mathsf{d}_{\mathsf{K}}(\A, \B)$ if and only if
    there exist $n \in \mathbb{N}$ and $a_1, \dots, a_{2n+1} \in A$ such that $\B \vDash \varphi_n(a_1, \dots, a_{2n+1}, b)$. 
\end{Theorem}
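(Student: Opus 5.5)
The plan is to reduce the statement to \cref{Thm : zigzag original} for $\mathsf{CS}$. The key observation is that, for commutative monoids, the pushout that computes dominions is a quotient of a direct product, so it never leaves $\K$.

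The implication from right to left is straightforward. Suppose $\B \vDash \varphi_n(a_1, \dots, a_{2n+1}, b)$ with $a_i \in A$. Then $b \in \mathsf{d}_{\mathsf{S}}(\A_\mathsf{s}, \B_\mathsf{s})$ by \cref{Thm : zigzag original}. Now let $g, h \colon \B \to \C$ be homomorphisms with $\C \in \K$ and $g{\upharpoonright}_A = h{\upharpoonright}_A$. They are in particular semigroup homomorphisms into $\C_\mathsf{s} \in \mathsf{S}$, so $g(b) = h(b)$. Hence $b \in \mathsf{d}_\K(\A, \B)$. (Alternatively, one can check directly the usual zigzag computation $g(b) = g(a_1)g(z_1) = \dots = h(b)$, which uses only the equations in $\psi_n$ and the agreement of $g,h$ on the $a_i$.)

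For the converse, I will prove $\mathsf{d}_\K(\A,\B) \subseteq \mathsf{d}_{\mathsf{CS}}(\A_\mathsf{s},\B_\mathsf{s})$ and then apply \cref{Thm : zigzag original} with $\mathsf{K} = \mathsf{CS}$. This splits into two steps.

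\emph{Step 1: $\mathsf{d}_\K(\A,\B) \subseteq \mathsf{d}_{\mathsf{CM}}(\A,\B)$.} Let $\theta$ be the congruence of $\B \times \B$ generated by the pairs $\langle \langle a, 1\rangle, \langle 1, a\rangle\rangle$ for $a \in A$. Put $\D = (\B\times\B)/\theta$, and let $\iota_1(c) = \langle c,1\rangle/\theta$ and $\iota_2(c) = \langle 1, c\rangle/\theta$. Since $\B \times \B \in \K$ and $\K$ is closed under $\HHH$, we have $\D \in \K$, and $\iota_1, \iota_2 \colon \B \to \D$ agree on $A$. Now take $b \in \mathsf{d}_\K(\A,\B)$; then $\iota_1(b) = \iota_2(b)$. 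Let $g,h \colon \B \to \C$ be arbitrary homomorphisms with $\C \in \mathsf{CM}$ and $g{\upharpoonright}_A = h{\upharpoonright}_A$. The map $\langle c,d\rangle \mapsto g(c)h(d)$ is a homomorphism $\B\times\B \to \C$, since $\C$ is commutative. Its kernel contains the generators of $\theta$, so it factors through $\D$, and composing with $\iota_1 $ and $\iota_2$ gives back $g$ and $h$. From $\iota_1(b)=\iota_2(b)$ we get $g(b) = h(b)$. Thus $b \in \mathsf{d}_{\mathsf{CM}}(\A,\B)$. (In fact equality holds, but only this inclusion is needed.)

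\emph{Step 2: $\mathsf{d}_{\mathsf{CM}}(\A,\B) \subseteq \mathsf{d}_{\mathsf{CS}}(\A_\mathsf{s},\B_\mathsf{s})$.} Let $g,h \colon \B_\mathsf{s} \to \C$ be semigroup homomorphisms with $\C \in \mathsf{CS}$ that agree on $A$. Since $1 \in A$, the element $e = g(1) = h(1)$ is an idempotent, and $e\C = \{ec : c \in C\}$ is a commutative monoid with neutral element $e$. From $g(c) = g(1c) = e\,g(c)$, and similarly for $h$, both maps land in $e\C$ and preserve the unit. So they are monoid homomorphisms $\B \to e\C$ that agree on $A$, and hence agree on every element of $\mathsf{d}_{\mathsf{CM}}(\A,\B)$.

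Combining the two steps with \cref{Thm : zigzag original} for $\mathsf{CS}$ yields the required $n$ and $a_1, \dots, a_{2n+1}$. The only point needing care is Step 1: one must check that the universal property of $\D$ is verified entirely inside $\mathsf{CM}$ and that $\D$ genuinely lies in $\K$. This is exactly where commutativity is used, through the coproduct in $\mathsf{CM}$ being the direct product.
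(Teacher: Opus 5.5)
Your proof is correct. Its skeleton matches the paper's: pass from $\mathsf{d}_\K(\A,\B)$ to $\mathsf{d}_{\mathsf{CM}}(\A,\B)$, then to $\mathsf{d}_{\mathsf{CS}}(\A_\mathsf{s},\B_\mathsf{s})$, and apply \cref{Thm : zigzag original} for $\mathsf{CS}$.

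Your Step 2 is essentially \cref{Prop : dom computed in CSG vs CMon}. You corestrict $g,h$ to the corner $e\C$, whereas the paper corestricts to the subsemigroup generated by $g[B]\cup h[B]$ and checks case by case that $g(1^\B)$ is neutral there. Yours is slightly slicker, but the idea is the same.

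The genuine difference is Step 1.
\begin{itemize}
\item The paper invokes Head's classification (\cref{Thm : classification of CMon-Vars}) to write $\K=\V_{m,n}$. It uses the one-variable axiom $x^{m+n}\thickapprox x^n$ to build the coreflector $\Phi_\K$, and deduces from \cref{Thm : McKenzie adjunctions} that the inclusion $\Psi_\K$ preserves colimits. It then transfers dominions through Isbell's pushout characterization (\cref{Thm : dominions = pushouts}).
\item You write down the $\mathsf{CM}$-pushout explicitly as $(\B\times\B)/\theta$. You observe that it lies in $\K$ simply because $\K$ is closed under $\PPP$ and $\HHH$. You then check by hand, using commutativity of $\C$, the one piece of the universal property you need: the factorization $\langle c,d\rangle/\theta\mapsto g(c)h(d)$.
\end{itemize}

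Your route is more elementary and more general. It never uses the classification of subvarieties of $\mathsf{CM}$, nor any category theory. The same product-or-direct-sum-then-quotient argument shows that every subvariety of $\mathsf{CM}$ is closed under $\mathsf{CM}$-colimits.

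The paper's route yields the stronger structural fact that each such $\K$ is a coreflective subcategory of $\mathsf{CM}$, which fits the general framework of adjunctions between varieties. That extra information is not needed for \cref{Thm : Doms in CMon Vars} itself.

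Your right-to-left direction, via $\mathsf{d}_{\mathsf{S}}$ or the direct zigzag computation, is also fine. It simply isolates the trivial inclusions that the paper obtains from its chain of equalities.
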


To this end, we will make use of some elementary category theory, for which we refer to  \cite{MR1712872}. As a first step, we recall that every variety $\K$ can be viewed as a category whose objects are the members of $\K$ and whose arrows are the homomorphisms between them. As such, $\K$ is a bicomplete category  and, in particular, closed under small colimits (see, e.g., \cite[Prop.~9.4.8 \& Thm.~9.4.14]{Ber15}).\footnote{From the perspective of category theory, it is convenient to add an empty algebra to varieties whose languages do not comprise any constant symbol. This issue, however, is immaterial for varieties of monoids.}
Therefore, given $\A \leq \B \in \K$, we denote by $p_1, p_2 \colon \B \to \B \ast ^{\mathsf{K}}_\A \B$ the pair of arrows of $\K$ consisting of the pushout in $\K$ of the diagram given by two copies of the inclusion map from  $\A$ to $\B$. 
 Dominions and pushouts are connected as follows (see \cite[Lem.1.1]{Isb65}).
\begin{Theorem}\label{Thm : dominions = pushouts}
    Let $\mathsf{K}$ be a variety, $\A \leq \B \in \mathsf{K}$, and $p_1, p_2 \colon \B \to \B \ast ^{\mathsf{K}}_\A \B$ the pushout in $\K$ of two copies of the inclusion map from $\A$ to $\B$. Then
    \[
    \mathsf{d}_{\mathsf{K}}(\A, \B) = \{ b \in B : p_1(b) = p_2(b) \}.
    \]
\end{Theorem}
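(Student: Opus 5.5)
We prove the two inclusions separately. Write $P = \B \ast^{\K}_\A \B$ with its arrows $p_1, p_2$, so that $p_1{\upharpoonright}_A = p_2{\upharpoonright}_A$.

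For the inclusion $\subseteq$, let $b \in \mathsf{d}_\K(\A,\B)$. Since $P \in \K$ and $p_1, p_2$ agree on $A$, applying the definition of the dominion to the pair $g = p_1$ and $h = p_2$ gives $p_1(b) = p_2(b)$.

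For the inclusion $\supseteq$, let $b \in B$ with $p_1(b) = p_2(b)$. Take any pair $g, h \colon \B \to \C$ with $\C \in \K$ and $g{\upharpoonright}_A = h{\upharpoonright}_A$. Then $g \circ i = h \circ i$, where $i \colon \A \to \B$ is the inclusion, so $g$ and $h$ form a cocone over the span $\B \leftarrow \A \rightarrow \B$ given by two copies of $i$. This span is exactly the diagram whose pushout is $P$. By the universal property of the pushout in $\K$, there is a unique homomorphism $u \colon P \to \C$ with $u \circ p_1 = g$ and $u \circ p_2 = h$. Hence
\[
g(b) = u(p_1(b)) = u(p_2(b)) = h(b).
\]
Since $g$ and $h$ were arbitrary, $b \in \mathsf{d}_\K(\A,\B)$.

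The main point to check is that the pushout exists in $\K$ and that its universal property ranges over all of $\K$. Both follow from the cocompleteness of varieties recalled above: the pushout is built as the free product of two copies of $\B$ in $\K$, quotiented by the congruence generated by identifying the two copies of each $a \in A$. Its arrows are homomorphisms of $\K$ and $P \in \K$, so it serves both as a test object in the first inclusion and as a universal object in the second.
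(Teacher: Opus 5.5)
Your proof is correct. The paper gives no proof of its own and simply cites Isbell's Lemma~1.1, whose standard proof is exactly your argument: for $\subseteq$, test the definition of the dominion against the pair $p_1, p_2$, and for $\supseteq$, factor an arbitrary pair $g, h$ agreeing on $A$ through the pushout (only the existence of the mediating map $u$ is needed).
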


Let $\V$ be a subvariety of a variety $\K$. We say that $\V$ is \emph{closed under $\K$-pushouts} when the pushout in $\V$ of each pair of arrows $g \colon \A \to \B$ and $h \colon \A \to \C$ of $\V$ coincides with the pushout of $g$ and $h$ in $\K$. Similarly, $\V$ is said to be  \emph{closed under  $\K$-colimits} when the colimit in $\V$ of every small diagram in $\V$ coincides with the colimit in $\K$ of the same small diagram. As pushouts are a special kind of  small colimits, closure under  $\K$-colimits implies closure under $\K$-pushouts. 

\begin{Corollary} \label{Cor : closure under pushouts implies same doms}
    Let $\V$ be a subvariety of a variety $\K$ and assume that $\V$ is closed under $\K$-pushouts. The for all $\A \leq \B \in \V$ we have $\mathsf{d}_{\mathsf{V}}(\A, \B) = \mathsf{d}_{\mathsf{K}}(\A, \B)$. 
\end{Corollary}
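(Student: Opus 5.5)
This is a direct combination of \cref{Thm : dominions = pushouts} with the hypothesis that $\V$ is closed under $\K$-pushouts. Fix $\A \leq \B \in \V$. Since $\V \subseteq \K$, we also have $\A \leq \B \in \K$. By \cref{Thm : dominions = pushouts} applied once to $\V$ and once to $\K$, we obtain
\[
\mathsf{d}_{\V}(\A, \B) = \{ b \in B : p_1(b) = p_2(b) \} \quad\text{and}\quad \mathsf{d}_{\K}(\A, \B) = \{ b \in B : q_1(b) = q_2(b) \}.
\]
Here $p_1, p_2 \colon \B \to \B \ast^{\V}_\A \B$ is the pushout in $\V$ and $q_1, q_2 \colon \B \to \B \ast^{\K}_\A \B$ is the pushout in $\K$, in both cases of two copies of the inclusion $\A \hookrightarrow \B$.

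The diagram formed by two copies of the inclusion is a diagram in $\V$. By the closure assumption, its pushout in $\V$ coincides with its pushout in $\K$. Pushouts are only determined up to a unique isomorphism commuting with the cocone maps. So the precise content of ``coincides'' is that there is an isomorphism $\iota \colon \B \ast^{\V}_\A \B \to \B \ast^{\K}_\A \B$ with $\iota \circ p_i = q_i$ for $i = 1, 2$. Equivalently, the $\V$-pushout cocone is also a pushout cocone in $\K$, and uniqueness of pushouts in $\K$ supplies $\iota$.

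Since $\iota$ is injective, for every $b \in B$ we have $p_1(b) = p_2(b)$ if and only if $q_1(b) = q_2(b)$. The two sets in the display are therefore equal, which gives $\mathsf{d}_{\V}(\A, \B) = \mathsf{d}_{\K}(\A, \B)$.

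The argument has no real obstacle. The only point needing care is reading ``coincides'' as agreement of the pushouts up to a cocone-compatible isomorphism, and noting that the equalizer-type condition $p_1(b) = p_2(b)$ is invariant under such an isomorphism.
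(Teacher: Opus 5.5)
Your proposal is correct and follows the paper's proof, which is simply ``Immediate from \cref{Thm : dominions = pushouts}'': you apply that theorem in both $\V$ and $\K$ and use the closure hypothesis to identify the two pushouts. You also make explicit something the paper leaves implicit, namely that the condition $p_1(b) = p_2(b)$ is unchanged under a cocone-compatible isomorphism, so ``coincides'' may be read up to such an isomorphism.
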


\begin{proof}
    Immediate from  \cref{Thm : dominions = pushouts}.
\end{proof}

Let $\K$ be a variety of commutative monoids. In view of \cref{Thm : classification of CMon-Vars}, $\K$ is axiomatized relative to $\mathsf{CM}$ by $x^{m+n} \thickapprox x^n$ for some $m, n \in \mathbb{N}$. Observe that  for every $\A \in \mathsf{CM}$ the set
\[
\{ a \in A : a^{m+n} = a^n \}
\]
is the universe of a subalgebra $\Phi_\K(\A)$ of $\A$ that, moreover, belongs to $\K$. In addition, for every homomorphism $h \colon \A \to \B$ between members of $\mathsf{CM}$ the map $\Phi_\K(h) \colon \Phi_\K(\A) \to \Phi_\K(\B)$ defined as $\Phi_\K(h)(a) = h(a)$ is also a homomorphism. Therefore, $\Phi_\K \colon \mathsf{CM} \to \K$ can be viewed as a covariant functor (for a generalization of this construction, see \cite[Thm.\ 2.8]{MoAdj}). Lastly, let $\Psi_\K \colon \K \to \mathsf{CM}$ be the inclusion functor. 
The next result establishes that $\K$ is a coreflective subcategory of $\CMon$, that is, $\Psi_\K$ is a left adjoint.

\begin{Theorem}\label{Thm : McKenzie adjunctions}
For every variety $\K$ of commutative monoids  $\Psi_\K$ is left adjoint to $\Phi_\K$.
\end{Theorem}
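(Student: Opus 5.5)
We prove the adjunction directly through its universal property: the identity maps serve as counit, and we check a natural bijection
\[
\mathsf{CM}(\Psi_\K(\A), \B) \cong \K(\A, \Phi_\K(\B))
\]
for all $\A \in \K$ and $\B \in \mathsf{CM}$. By \cref{Thm : classification of CMon-Vars}, $\K = \V_{m,n}$ for some $m, n \in \mathbb{N}$, and the set $\Phi_\K(\B) = \{ b \in B : b^{m+n} = b^n\}$ is computed with respect to these fixed $m, n$.

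The key observation concerns a homomorphism $h \colon \A \to \B$ with $\A \in \K$ and $\B \in \mathsf{CM}$. Its image lands inside $\Phi_\K(\B)$. Indeed, for $a \in A$ we have $a^{m+n} = a^n$ because $\A \vDash x^{m+n} \thickapprox x^n$. Hence
\[
h(a)^{m+n} = h(a^{m+n}) = h(a^n) = h(a)^n,
\]
so $h(a) \in \Phi_\K(\B)$. Therefore $h$ corestricts to a homomorphism $\hat h \colon \A \to \Phi_\K(\B)$.

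The correspondence is set up as follows. We send $h$ to $\hat h$, and we send a homomorphism $k \colon \A \to \Phi_\K(\B)$ to its composite with the inclusion $\iota_\B \colon \Phi_\K(\B) \to \B$. Composing with $\iota_\B$ gives a homomorphism of commutative monoids, because $\Phi_\K(\B)$ is a submonoid of $\B$. The identity $1$ lies in it since $1^{m+n} = 1^n$, and it is closed under products by commutativity, since $(bc)^{m+n} = b^{m+n}c^{m+n} = b^nc^n = (bc)^n$. The two assignments are mutually inverse: corestriction followed by inclusion returns the same underlying function, and conversely.

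Naturality in $\A$ and $\B$ holds because both assignments leave the underlying set map unchanged. Precomposition with $f \colon \A' \to \A$ clearly commutes with corestriction. Postcomposition with $g \colon \B \to \B'$ satisfies
\[
\iota_{\B'} \circ \Phi_\K(g) = g \circ \iota_{\B}
\]
by the definition of $\Phi_\K(g)$. Equivalently, $\iota_\B$ is a universal arrow from $\Psi_\K$ to $\B$, and this yields $\Psi_\K \dashv \Phi_\K$.

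There is no real obstacle here. The only point requiring care is that the argument uses commutativity, through closure of $\Phi_\K(\B)$ under products, and that the equational definition of $\K$ is what forces the image of any homomorphism from a member of $\K$ into $\Phi_\K(\B)$.
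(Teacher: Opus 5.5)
Your proof is correct and follows the paper's argument. The paper reduces the adjunction, via Mac Lane's Thm.~IV.2(ii), to the fact that every homomorphism from a member of $\K$ into $\B$ has image in $\Phi_\K(\B)$, proved exactly as you do. You then unpack this into an explicit natural hom-set bijection, and also check that $\Phi_\K(\B)$ is a submonoid, which the paper only asserts.

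One small slip in terminology: the identity maps $\A \to \Phi_\K(\Psi_\K(\A)) = \A$ are the \emph{unit} of the adjunction, while the inclusions $\iota_\B$ form the counit, as your later remark about universal arrows from $\Psi_\K$ to $\B$ correctly indicates.
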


\begin{proof}
    In view of \cite[Thm.\ IV.2(ii)]{MR1712872}, it suffices to show that for every homomorphism $h \colon \A \to \B$ with  $\A \in \K$ and $\B \in \mathsf{CM}$ we have $h[A] \subseteq \Phi_\K(\B)$. To this end, consider $h \colon \A \to \B$ as above. Since $\K$ is a variety of commutative monoids, it is axiomatized relative to $\mathsf{CM}$ by $x^{m+n} \thickapprox x^n$ for some $m, n \in \mathbb{N}$ (see \cref{Thm : classification of CMon-Vars}). Then consider $a \in A$. As $\A \in \K$, we have $a^{m+n} = a^n$. Since $h$ is a homomorphism, this yields $h(a)^{m+n} = h(a)^n$, whence $h(a) \in \Phi_\K(\B)$ by the definition of $\Phi_\K(\B)$.
\end{proof}

\begin{Corollary}
    \label{Cor : 1-var axioms implies closed under pushouts}
    Every variety of commutative monoids is closed under  $\mathsf{CM}$-colimits.
\end{Corollary}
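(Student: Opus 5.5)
The plan is to use the standard categorical fact that a coreflective subcategory is closed under the colimits of the ambient category, in the following concrete form. Let $D \colon \mathcal{I} \to \K$ be a small diagram in $\K$. Consider the colimit of $\Psi_\K \circ D$ in $\mathsf{CM}$, which exists because $\mathsf{CM}$ is bicomplete. Write it as $\C$, with colimit injections $c_i \colon D(i) \to \C$. I will first show that $\C$ already belongs to $\K$; then the cocone $\langle c_i \rangle$ is a cocone in $\K$. Since $\K$ is a full subcategory of $\mathsf{CM}$, the universal property of $\C$ in $\mathsf{CM}$ restricts verbatim to cocones landing in $\K$. Hence $\C$ with the $c_i$ is also the colimit of $D$ in $\K$, and colimits are unique up to isomorphism, which is exactly the claim.

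The key step, and the main obstacle, is showing $\C \in \K$. I would use \cref{Thm : McKenzie adjunctions} (more precisely, the property established in its proof): every homomorphism $h \colon \A \to \B$ with $\A \in \K$ and $\B \in \mathsf{CM}$ satisfies $h[A] \subseteq \Phi_\K(\B)$. Applying this to each $c_i$, every image $c_i[D(i)]$ lies in the subalgebra $\Phi_\K(\C)$.

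Next, I would use that the colimit in a variety is jointly generated by the images of the injections. This follows from the universal property: let $\C'$ be the subalgebra of $\C$ generated by $\bigcup_i c_i[D(i)]$. The corestricted maps $c_i \colon D(i) \to \C'$ form a cocone, so they induce a map $u \colon \C \to \C'$. Composing $u$ with the inclusion $\C' \hookrightarrow \C$ gives an endomorphism of $\C$ commuting with all the $c_i$, so this composite is the identity by uniqueness. The inclusion is therefore surjective, and $\C' = \C$.

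Since $\Phi_\K(\C)$ is a subalgebra containing all the generators, $\Phi_\K(\C) = C$. This means every element of $\C$ satisfies $x^{m+n} \thickapprox x^n$, where $\K = \V_{m,n}$ by \cref{Thm : classification of CMon-Vars}, so $\C \in \K$. Everything after this step is routine bookkeeping with universal properties. Alternatively, one could simply cite the general result that left adjoint full inclusions (coreflective full subcategories) are closed under colimits, since left adjoints preserve colimits.
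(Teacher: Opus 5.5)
Your proof is correct, but it takes a different route from the paper.

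The paper's argument is abstract. It takes the colimit $X$ of the diagram in $\K$, which exists because varieties are cocomplete. Since the inclusion $\Psi_\K$ is a left adjoint by \cref{Thm : McKenzie adjunctions}, and left adjoints preserve small colimits, $X=\Psi_\K(X)$ is also the colimit in $\mathsf{CM}$.

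You go in the opposite direction. You build the colimit $\C$ in $\mathsf{CM}$ and check by hand that it lies in $\K$. For this you use the joint-generation lemma for colimits, which you derive correctly from the uniqueness clause of the universal property. You combine it with the fact that every colimit injection lands in the subuniverse $\Phi_\K(\C)$. Fullness of $\K$ then transfers the universal property.

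The two routes buy different things:
\begin{itemize}
\item The paper's version is shorter and purely formal once the adjunction is available. It needs no information about how colimits in $\mathsf{CM}$ are generated.
\item Yours is self-contained and more elementary. It uses only the factorization property $h[A]\subseteq\Phi_\K(\B)$ established in the proof of \cref{Thm : McKenzie adjunctions}, not the preservation theorem. It does not presuppose that colimits exist in $\K$, and in fact yields them. It also shows which feature of $\K$ does the work: the solution set of $x^{m+n}\thickapprox x^n$ is a subuniverse of every commutative monoid.
\end{itemize}

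In effect, yours is a concrete instance of the standard proof that the counit $\Phi_\K(\C)\hookrightarrow\C$ of a coreflection is an isomorphism at colimits of coreflective objects. Joint generation stands in for the universal property of the counit.
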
 

\begin{proof}
Let $\K$ be a variety of commutative monoids and $\mathsf{D}$ a small diagram in $\K$. Moreover, let $X$ be the colimit of $\mathsf{D}$ in $\K$. Since $\Psi_\K \colon \K \to \mathsf{CM}$ is a left adjoint functor by \cref{Thm : McKenzie adjunctions} and  left adjoint functors preserve small colimits, $\Psi_\K(X)$ is the colimit of $\Psi_\K(\mathsf{D})$ in $\mathsf{CM}$. As $\Psi_\K$ is the inclusion functor of $\K$ into $\mathsf{CM}$, we have $X = \Psi_\K(X)$ and $\mathsf{D} = \Psi_\K(\mathsf{D})$. Hence $X$ is the colimit of $\mathsf{D}$ in $\mathsf{CM}$ as well.
\end{proof}

From Corollaries \ref{Cor : closure under pushouts implies same doms} and \ref{Cor : 1-var axioms implies closed under pushouts} we deduce the following.

\begin{Corollary} \label{Cor : compute doms of subvarieties in CMon}
Let $\mathsf{K}$ be a variety of commutative monoids. Then for all $\A \leq \B \in \K$,
\[
\mathsf{d}_{\mathsf{K}}(\A, \B) = \mathsf{d}_{\mathsf{CM}}(\A, \B).
\]
\end{Corollary}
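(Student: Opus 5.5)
This is immediate from the two preceding corollaries. Let $\K$ be a variety of commutative monoids and $\A \leq \B \in \K$.

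First, I apply \cref{Cor : 1-var axioms implies closed under pushouts}, which says that $\K$ is closed under $\mathsf{CM}$-colimits. Pushouts are small colimits, so $\K$ is closed under $\mathsf{CM}$-pushouts. In particular, the pushout in $\K$ of the two copies of the inclusion $\A \to \B$ is the same as their pushout in $\mathsf{CM}$.

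Second, I apply \cref{Cor : closure under pushouts implies same doms} with $\V = \K$ and with $\mathsf{CM}$ playing the role of the ambient variety. This gives $\mathsf{d}_{\mathsf{K}}(\A, \B) = \mathsf{d}_{\mathsf{CM}}(\A, \B)$, which is the claim. Unwound, the argument is that by \cref{Thm : dominions = pushouts} both dominions equal the set of $b \in B$ with $p_1(b) = p_2(b)$, and the maps $p_1, p_2$ are the same in $\K$ and in $\mathsf{CM}$.

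I expect no real obstacle. The only point to check is that the hypotheses line up: $\K$ is a subvariety of the variety $\mathsf{CM}$, and $\A, \B$ belong to $\K$, so the pushout diagram lives in $\K$. The substantive work, namely the coreflection in \cref{Thm : McKenzie adjunctions}, has already been done.
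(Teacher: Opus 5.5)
Your proposal is correct and matches the paper's argument: the paper also obtains this corollary directly by combining \cref{Cor : 1-var axioms implies closed under pushouts} (closure under $\mathsf{CM}$-colimits, hence under $\mathsf{CM}$-pushouts) with \cref{Cor : closure under pushouts implies same doms}, the latter resting on the pushout description of dominions in \cref{Thm : dominions = pushouts}.
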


Recall that the semigroup reduct of a monoid $\A$ is denoted by $\A_\mathsf{s}$. In view of \cref{Thm : zigzag original} and \cref{Cor : compute doms of subvarieties in CMon}, to prove \cref{Thm : Doms in CMon Vars}, it will be enough to establish the following.

\begin{Proposition} \label{Prop : dom computed in CSG vs CMon}
For all $\A \leq \B \in \mathsf{CM}$,
\[
\mathsf{d}_{\mathsf{CM}}(\A, \B) =  \mathsf{d}_{\mathsf{CS}}(\A_\mathsf{s},  \B_\mathsf{s}).
\]
\end{Proposition}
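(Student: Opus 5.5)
We prove the two inclusions separately, using only the definition of dominions together with the standard passage between commutative semigroups and commutative monoids obtained by adjoining an identity.

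\emph{The inclusion $\mathsf{d}_{\mathsf{CS}}(\A_\mathsf{s}, \B_\mathsf{s}) \subseteq \mathsf{d}_{\mathsf{CM}}(\A, \B)$.} Let $b \in \mathsf{d}_{\mathsf{CS}}(\A_\mathsf{s}, \B_\mathsf{s})$, and let $g, h \colon \B \to \C$ be monoid homomorphisms with $\C \in \mathsf{CM}$ and $g{\upharpoonright}_A = h{\upharpoonright}_A$. Their semigroup reducts $g, h \colon \B_\mathsf{s} \to \C_\mathsf{s}$ are semigroup homomorphisms between commutative semigroups. They agree on $A$, so $g(b) = h(b)$ by the definition of $\mathsf{d}_{\mathsf{CS}}$. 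Hence $b \in \mathsf{d}_{\mathsf{CM}}(\A, \B)$.

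\emph{The inclusion $\mathsf{d}_{\mathsf{CM}}(\A, \B) \subseteq \mathsf{d}_{\mathsf{CS}}(\A_\mathsf{s}, \B_\mathsf{s})$.} Let $b \in \mathsf{d}_{\mathsf{CM}}(\A, \B)$, and let $g, h \colon \B_\mathsf{s} \to \C$ be semigroup homomorphisms with $\C \in \mathsf{CS}$ and $g{\upharpoonright}_A = h{\upharpoonright}_A$. The difficulty is that $g$ and $h$ need not preserve $1$, and $\C$ need not be a monoid. We repair this in two steps.

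\textbf{Step 1.} Form the commutative monoid $\C^1$ by adjoining a fresh identity to $\C$. This is harmless, since a semigroup homomorphism into $\C$ is still one into $\C^1$.

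\textbf{Step 2.} Since $1 \in A$, we have $g(1) = h(1) =: e$, and $e$ is an idempotent of $\C^1$. Let
\[
\D = e\C^1 = \{ ec : c \in C^1 \}.
\]
This is a subsemigroup of $\C^1$; it is commutative and has neutral element $e$, so it is a commutative monoid with identity $e$. For every $x \in B$ we have $g(x) = g(1x) = e\,g(x) \in D$, and likewise $h(x) \in D$. Therefore $g$ and $h$ corestrict to maps $g', h' \colon \B \to \D$. These are monoid homomorphisms, because they preserve products and send $1$ to $e$, the identity of $\D$. They also agree on $A$. Since $b \in \mathsf{d}_{\mathsf{CM}}(\A, \B)$, we get $g'(b) = h'(b)$, that is, $g(b) = h(b)$. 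So $b \in \mathsf{d}_{\mathsf{CS}}(\A_\mathsf{s}, \B_\mathsf{s})$.

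The only real obstacle is this non-unital issue, and the corestriction to $e\C^1$ resolves it. It relies on two facts: $1 \in A$ forces $g(1) = h(1)$, and the image of $g$ lies in $e\C^1$.
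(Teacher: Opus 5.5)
Your proof is correct and takes essentially the paper's route: you get the inclusion $\mathsf{d}_{\mathsf{CS}}(\A_\mathsf{s},\B_\mathsf{s}) \subseteq \mathsf{d}_{\mathsf{CM}}(\A,\B)$ by passing to semigroup reducts, and for the converse you use $1 \in A$ to obtain $g(1)=h(1)$ and then regard $g,h$ as monoid homomorphisms into a commutative monoid inside $\C$ whose identity is $g(1)$. The only difference is the choice of that monoid: the paper replaces $\C$ by $\mathsf{Sg}^{\C}(g[B]\cup h[B])$ and checks by cases that $g(1)$ is neutral there, whereas with your $e\C^1$ neutrality is immediate (Step 1 is also dispensable, since $e = ee$ already lies in $\{ec : c \in C\}$).
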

\begin{proof}
To prove the inclusion from right to left, we reason by contraposition. Consider $b \in B -  \mathsf{d}_{\mathsf{CM}}(\A, \B)$. Then there exists a pair of monoid homomorphisms $g,h \colon \B \to \C$ with  $\C \in \mathsf{CM}$ such that $g {\upharpoonright}_A = h {\upharpoonright}_A$ and $g(b) \neq h(b)$.
Then  $g,h \colon \B_\mathsf{s} \to \C_\mathsf{s}$ is a pair of semigroup homomorphisms with $\C_\mathsf{s} \in \mathsf{CS}$ witnessing  $b \notin \mathsf{d}_{\mathsf{CS}}(\A_\mathsf{s},  \B_\mathsf{s})$, as desired.

Next, we prove the inclusion from left to right. 
Again, we proceed by contraposition. Consider  $b \in B -  \mathsf{d}_{\mathsf{CS}}(\A_\mathsf{s}, \B_\mathsf{s})$. Then there exists a pair of semigroup homomorphisms $g,h \colon \B_\mathsf{s} \to \C$ with $\C \in \mathsf{CS}$ such that $g {\upharpoonright}_A = h {\upharpoonright}_A$ and $g(b) \neq h(b)$. We may assume that $C = \mathsf{Sg}^{\C}(g[B] \cup h[B])$, otherwise we replace $\C$ by its subalgebra with universe $\mathsf{Sg}^{\C}(g[B] \cup h[B])$.

\begin{Claim}\label{Claim : C is a monoid really}
The element $g(1^{\B})$ is neutral in $\C$ and $g(1^{\B}) = h(1^\B)$.
\end{Claim}

\noindent \textit{Proof of the Claim.}
First, observe that $1^{\B} \in A$ because $\A$ is a subalgebra of the monoid $\B$. Together with the assumption that $g {\upharpoonright}_A = h {\upharpoonright}_A$, this yields $g(1^{\B}) = h(1^{\B})$.

    Next, we prove that $g(1^\B)$ is neutral in $\C$. Consider an element $c \in C$. As $\C$ is commutative, it will be enough to show that  $c = g(1^{\B})c$. Since $c \in C = \mathsf{Sg}^\C(g[B]\cup h[B])$ and  $g[B]$ and $h[B]$ are both universes of subalgebras of $\C$, there exist $a_1, a_2 \in B$ such that
\[
\text{either} \quad c = g(a_1),\quad \text{or} \quad c = g(a_1)h(a_2), \quad \text{or} \quad c = h(a_2).
\]
Observe that $g(1^{\B})g(a_1) = g(1^{\B}a_1) = g(a_1)$. When combined with any of the first two cases in the above display, this yields $g(1^\B)c = c$, as desired. Therefore, it only remains to consider the case where $c = h(a_2)$. Recall that $g(1^\B) = h(1^\B)$. 
    Hence,     
    \[
    \pushQED{\qed}g(1^{\B})c = h(1^{\B})c = h(1^\B)h(a_2) = h(1^\B a_2) = h(a_2) = c.\qedhere \popQED\] 

By \cref{Claim : C is a monoid really} the commutative semigroup $\C$ can be viewed as a commutative monoid $\C^*$, whose neutral element is $g(1^\B) = h(1^\B)$. Consequently, the semigroup homomorphisms $g, h \colon \B_\mathsf{s} \to \C$ can be viewed as monoid homomorphisms $g, h \colon \B \to \C^*$. As such, $g$ and $h$ witness $b \notin \mathsf{d}_{\mathsf{CM}}(\A, \B)$ because $g {\upharpoonright}_A = h {\upharpoonright}_A$ and $g(b) \neq h(b)$ by assumption.
\end{proof}

\section{Isbell's formulas}

In this section, we establish some properties of Isbell's formulas that will be needed later on. Recall that $\psi_n$ is the conjunction of the formulas in (\ref{Isbell : 1 : formula})--(\ref{Isbell : 6 : formula}).

\begin{Proposition} \label{Prop : Isb argument 0 -> y = 0}
        Let $\A \in \mathsf{CM}$ and 
        $c_1, \dots, c_n, d_1, \dots, d_n, a_1, \dots, a_{2n+1}, b \in A$  with $n>0$ be such that 
        \[\A \vDash \psi_n(c_1, \dots, c_n, d_1, \dots, d_n, a_1, \dots, a_{2n+1}, b). \]
        Moreover, let $d_0 = 1$. 
        Then for every $0 \leq m \leq n-1$ we have  
        \[
        d_{m}a_{2m+1}c_{m+1} = b = d_{m+1}a_{2(m+1)}c_{m+1}.
        \]
    \end{Proposition}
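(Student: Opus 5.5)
We argue by induction on $m$, proving the two equalities $d_{m}a_{2m+1}c_{m+1} = b$ and $d_{m+1}a_{2(m+1)}c_{m+1} = b$ together. Throughout we use only commutativity and associativity of $\A$ together with the equations (\ref{Isbell : 1 : formula})--(\ref{Isbell : 6 : formula}) instantiated at $c_i$, $d_i$, $a_i$, $b$.

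\emph{Base case $m = 0$.} Since $d_0 = 1$, the first equality reads $a_1c_1 = b$, which is exactly (\ref{Isbell : 1 : formula}). For the second, (\ref{Isbell : 2 : formula}) gives $a_1 = d_1a_2$. Multiplying by $c_1$ and using (\ref{Isbell : 1 : formula}) we get
\[
d_1a_2c_1 = a_1c_1 = b.
\]

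\emph{Inductive step.} Suppose the statement holds for some $m$ with $m+1 \leq n-1$, so that in particular $d_{m+1}a_{2(m+1)}c_{m+1} = b$. Apply (\ref{Isbell : 3 : formula}) with $i = m+1$, which is allowed because $1 \leq m+1 \leq n-1$. This yields $a_{2(m+1)}c_{m+1} = a_{2(m+1)+1}c_{m+2}$. Multiplying by $d_{m+1}$ gives
\[
b = d_{m+1}a_{2(m+1)}c_{m+1} = d_{m+1}a_{2(m+1)+1}c_{m+2},
\]
which is the first equality at $m+1$. Next apply (\ref{Isbell : 4 : formula}) with $i = m+1$ to get $d_{m+1}a_{2(m+1)+1} = d_{m+2}a_{2(m+2)}$. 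Multiplying by $c_{m+2}$ gives $b = d_{m+2}a_{2(m+2)}c_{m+2}$, the second equality at $m+1$.

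The only delicate point is the indexing. We must check that (\ref{Isbell : 3 : formula}) and (\ref{Isbell : 4 : formula}) are invoked only at $i = m+1 \leq n-1$, which holds because the induction runs only up to $m = n-1$. We must also check that the index shifts $2i+1$ and $2(i+1)$ line up with $2(m+1)+1$ and $2(m+2)$. No other step poses an obstacle. Note that (\ref{Isbell : 5 : formula}) and (\ref{Isbell : 6 : formula}) are not needed for this statement.
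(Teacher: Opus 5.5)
Your proof is correct and follows the paper's argument essentially step for step. It is an induction on $m$: the base case comes from (\ref{Isbell : 1 : formula}) and (\ref{Isbell : 2 : formula}), and the inductive step multiplies (\ref{Isbell : 3 : formula}) at $i=m+1$ by $d_{m+1}$ and (\ref{Isbell : 4 : formula}) at $i=m+1$ by $c_{m+2}$, using only the second equality of the inductive hypothesis, exactly as the paper does.
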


    \begin{proof}
        We proceed by induction on $m$. 
        For $m = 0$, we have to show that $d_0a_1c_1 = b = d_1a_2c_1$. 
        Since $d_0 = 1$ by definition, this amounts to $a_1 c_1 = b = d_1a_2c_1$. Observe that $b = a_1c_1$ by~(\ref{Isbell : 1 : formula}). Moreover, by (\ref{Isbell : 2 : formula}) we have $a_1 = d_1a_2$. Together with $b= a_1c_1$, this yields $b = d_1a_2 c_1$, as desired.
        
        For the induction step, suppose that the statement holds for $0 \leq m < n-1$. We will show that it holds for $m+1$ as well. To this end, observe that (\ref{Isbell : 3 : formula}) and (\ref{Isbell : 4 : formula}) give
        \begin{equation}\label{Eq : avoid overfull}
                    a_{2(m+1)}c_{m+1} = a_{2(m+1)+1}c_{m+2} \, \, \text{ and }\, \, d_{m+1}a_{2(m+1)+1} = d_{m+2}a_{2(m+2)}.
        \end{equation}
From the right-hand side of (\ref{Eq : avoid overfull}) it follows that $d_{m+1}a_{2(m+1)+1}c_{m+2} = d_{m+2}a_{2(m+2)}c_{m+2}$. Therefore, it only remains to show that $d_{m+1}a_{2(m+1)+1}c_{m+2} = b$. Observe that, applying in succession the left-hand side of (\ref{Eq : avoid overfull}) and the inductive hypothesis, we obtain $d_{m+1}a_{2(m+1)+1}c_{m+2} = d_{m+1}a_{2(m+1)}c_{m+1} = b$, as desired.
\end{proof}

Recall that $\varphi_n$ is the $n$-th Isbell's formula.

    \begin{Corollary} \label{Cor: : arg in Isb = 0 - > value 0}
    Let $\A \in \mathsf{CM}$  with a zero element $0$ and $a_1, \dots, a_{2n+1}, b \in A$ such that $\A \vDash  \varphi_n(a_1, \dots, a_{2n+1}, b)$. If $a_m = 0$ for some $1 \leq m \leq 2n+1$, then $b = 0$. 
    \end{Corollary}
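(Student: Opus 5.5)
The case $n = 0$ is immediate: here $\varphi_0(a_1, b)$ is $a_1 = b$, so $a_1 = 0$ gives $b = 0$. Assume $n > 0$ from now on. Since $\A \vDash \varphi_n(a_1, \dots, a_{2n+1}, b)$, we fix witnesses $c_1, \dots, c_n, d_1, \dots, d_n \in A$ with $\A \vDash \psi_n(c_1, \dots, c_n, d_1, \dots, d_n, a_1, \dots, a_{2n+1}, b)$ and set $d_0 = 1$. The plan is to use \cref{Prop : Isb argument 0 -> y = 0}, which for each $0 \leq k \leq n-1$ writes $b$ as a product containing $a_{2k+1}$, namely $d_k a_{2k+1} c_{k+1}$, and also as a product containing $a_{2(k+1)}$, namely $d_{k+1} a_{2(k+1)} c_{k+1}$. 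If one of these factors is $0$, then the whole product is $0$, because $0$ is a zero element.

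We split according to the parity of $m$. If $m = 2k+1$ with $0 \leq k \leq n-1$, then $b = d_k a_{2k+1} c_{k+1} = d_k 0 c_{k+1} = 0$ by the left equality of \cref{Prop : Isb argument 0 -> y = 0}. If $m = 2(k+1)$ with $0 \leq k \leq n-1$, that is, $m$ is even with $2 \leq m \leq 2n$, then $b = d_{k+1} a_{2(k+1)} c_{k+1} = 0$ by the right equality.

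The only index left uncovered is $m = 2n+1$, which the proposition does not reach because its range stops at $k = n-1$. For this case we use the last equation (\ref{Isbell : 6 : formula}) of $\psi_n$ directly: $b = d_n a_{2n+1} = d_n 0 = 0$. This boundary index is the only place that needs care, and it is dispatched by the final conjunct of the Isbell formula.
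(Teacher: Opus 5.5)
Your proposal is correct and follows essentially the same route as the paper. You dispatch $n=0$ directly, apply \cref{Prop : Isb argument 0 -> y = 0} to the indices $m = 2k+1$ and $m = 2(k+1)$ with $0 \leq k \leq n-1$, and use the last conjunct (\ref{Isbell : 6 : formula}) of $\psi_n$ for the boundary index $m = 2n+1$.
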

    
    \begin{proof}
     If $n=0$, then $a_1=b$ by the definition of $\varphi_0$. Hence, $a_1=0$ implies $b=0$, as desired. Then suppose that $n>0$.
      From $\A \vDash  \varphi_n(a_1, \dots, a_{2n+1}, b)$ and the definition of $\varphi_n$ it follows that 
      \[
      \A \vDash \psi_n(c_1, \dots, c_n, d_1, \dots, d_n, a_1, \dots, a_{2n+1}, b)
      \]
      for some $c_1, \dots, c_n, d_1, \dots, d_n \in A$. As $1 \leq m \leq 2n+1$ by assumption, either  $m=2n+1$ or $m \in \{2k+1,2(k+1)\}$ for some $0 \leq k \leq n-1$. First, suppose that $m = 2n+1$. Then (\ref{Isbell : 6 : formula}) gives $d_n a_{2n+1}= b$. As $a_{2n+1} = a_m = 0$ by assumption and $0$ is a zero element for $\A$, this yields $b = 0$, as desired. Next, we consider the case where $m = 2k+1$ (resp.\ $m = 2(k+1)$)  for some $0 \leq k \leq n-1$. By Proposition \ref{Prop : Isb argument 0 -> y = 0} and the assumption that $a_m$ is a zero element for $\A$ we obtain $b = d_{k}a_{2k+1}c_{k+1} = d_k a_m c_{k+1} = 0$ (resp.\ $b = d_{k+1}a_{2(k+1)}c_{k+1} = d_{k+1}a_{m}c_{k+1} = 0$).
    \end{proof}

        We will make use of the following observations.

\begin{Proposition}\label{prop:zigzag facts}
Let $\A \in \mathsf{CM}$ and $c_1, \dots, c_n, d_1, \dots, d_n, a_1, \dots, a_{2n+1}, b \in A$ with $n \in \mathbb{N}$ be such that
\[
\A \vDash \psi_n(c_1, \dots, c_n, d_1, \dots, d_n, a_1, \dots, a_{2n+1}, b).
\]
Then the following conditions hold:
\benroman
\item\label{prop:zigzag facts:1} If $n>0$, then $\A \vDash \psi_{n-1}(c_2, \dots, c_n, d_2, \dots, d_n, d_1a_3, a_4, \dots, a_{2n+1}, b)$;
\item\label{prop:zigzag facts:2} $\A \vDash \psi_n(c_1, \dots, c_n, ed_1, \dots, ed_n, ea_1, a_2, \dots, a_{2n+1}, eb)$ for every $e \in A$.
\eroman
\end{Proposition}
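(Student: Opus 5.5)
Both items are direct verifications: we substitute the proposed witnesses into the equations (\ref{Isbell : 1 : formula})--(\ref{Isbell : 6 : formula}) defining $\psi_n$ (resp.\ $\psi_{n-1}$) and derive each from the hypothesis using only associativity and commutativity. The relabelling of indices in item (\ref{prop:zigzag facts:1}) is the main source of potential error.

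For (\ref{prop:zigzag facts:1}), write the new tuple as $c'_i = c_{i+1}$ and $d'_i = d_{i+1}$ for $1 \leq i \leq n-1$. Set $a'_1 = d_1a_3$ and $a'_j = a_{j+2}$ for $2 \leq j \leq 2n-1$, with output $b$. If $n=1$, then $\psi_0$ just says $a'_1 = b$, that is, $d_1a_3 = b$, which is (\ref{Isbell : 6 : formula}) for $n=1$. If $n>1$, we check the six groups of equations in turn.
\begin{itemize}
\item Equation (\ref{Isbell : 1 : formula}) asks for $b = d_1a_3c_2$. This is \cref{Prop : Isb argument 0 -> y = 0} with $m=1$.
\item Equation (\ref{Isbell : 2 : formula}) asks for $d_1a_3 = d_2a_4$. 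This is (\ref{Isbell : 4 : formula}) for $i=1$.
\item The equations in (\ref{Isbell : 3 : formula}) read $a_{2i+2}c_{i+1} = a_{2i+3}c_{i+2}$ for $1 \leq i \leq n-2$. These are the original equations (\ref{Isbell : 3 : formula}) for indices $2,\dots,n-1$.
\item The equations in (\ref{Isbell : 4 : formula}) are the original ones shifted by one index in the same way.
\item Equations (\ref{Isbell : 5 : formula}) and (\ref{Isbell : 6 : formula}) become $a_{2n}c_n = a_{2n+1}$ and $d_na_{2n+1} = b$. These are unchanged from the hypothesis.
\end{itemize}

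For (\ref{prop:zigzag facts:2}), we multiply by $e$ in the appropriate places. Equation (\ref{Isbell : 1 : formula}) becomes $eb = (ea_1)c_1$, which holds because $b = a_1c_1$. Equation (\ref{Isbell : 2 : formula}) becomes $ea_1 = (ed_1)a_2$, which holds because $a_1 = d_1a_2$. The equations in (\ref{Isbell : 3 : formula}) and (\ref{Isbell : 5 : formula}) involve only the unchanged $a_j$ with $j \geq 2$ and the $c_i$, so they hold as before. The equations in (\ref{Isbell : 4 : formula}) become $ed_ia_{2i+1} = ed_{i+1}a_{2i+2}$, obtained by multiplying the originals by $e$. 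Equation (\ref{Isbell : 6 : formula}) becomes $(ed_n)a_{2n+1} = eb$, again by multiplying by $e$.

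The case $n=0$ of (\ref{prop:zigzag facts:2}) needs separate treatment, since then only $a_1 = b$ appears. The claim is $ea_1 = eb$, which is immediate.
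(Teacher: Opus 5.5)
Your proposal is correct and matches the paper's proof essentially step for step. For (i) you treat $n=1$ via (\ref{Isbell : 6 : formula}), and for $n>1$ you combine \cref{Prop : Isb argument 0 -> y = 0} at $m=1$ with (\ref{Isbell : 4 : formula}) at $i=1$ and the index-shifted remaining equations. For (ii) you multiply (\ref{Isbell : 1 : formula}), (\ref{Isbell : 2 : formula}), (\ref{Isbell : 4 : formula}) and (\ref{Isbell : 6 : formula}) by $e$, treating $n=0$ separately, and your index bookkeeping is accurate, including the empty ranges when $n=2$.
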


\begin{proof}
   By the 
definition of $\psi_n$
the 
following holds in $\A$:
    \begin{align}
b &= a_1 c_1;\label{Isbell : 1 : formula x}\\
a_1 & = d_1 a_2;\label{Isbell : 2 : formula x}\\
a_{2i}c_i & = a_{2i+1}c_{i+1} \text{ for }i = 1, \dots, n-1;\label{Isbell : 3 : formula x}\\
d_i a_{2i+1} & = d_{i+1}a_{2(i+1)} \text{ for }i = 1, \dots, n-1;\label{Isbell : 4 : formula x}\\
a_{2n}c_n & = a_{2n+1};\label{Isbell : 5 : formula x}\\
d_n a_{2n+1}& = b.
\label{Isbell : 6 : formula x}
\end{align}

\eqref{prop:zigzag facts:1}: Assume that $n > 0$. We have two cases: either $n = 1$ or $n > 1$.
First, suppose that $n = 1$. Applying \eqref{Isbell : 6 : formula x} we obtain $b = d_1a_3$. So, the definition of $\psi_0$ yields $\A \vDash \psi_{0}(d_1a_3, b)$, as desired. Next, we consider the case where $n>1$.
\cref{Prop : Isb argument 0 -> y = 0} implies that $b=d_1a_3c_2$. From \eqref{Isbell : 4 : formula x} it follows that $d_1a_3=d_2a_4$. These two equalities, together with \eqref{Isbell : 3 : formula x}--\eqref{Isbell : 6 : formula x}, imply 
\[
\A \vDash \psi_{n-1}(c_2, \dots, c_n, d_2, \dots, d_n, d_1a_3, a_4, \dots, a_{2n+1}, b)
\]
by the definition of $\psi_{n-1}$.

\eqref{prop:zigzag facts:2}: Let $e \in A$. We have two cases: either $n = 0$ or $n > 0$. First, suppose that $n = 0$. Then the definition of $\psi_0$ yields $a_1=b$. It follows that $e a_1 = eb$. Hence, $\A \vDash \psi_0(ea_1, eb)$, as desired. Next, we consider the case where $n>0$.
 Multiplying \eqref{Isbell : 1 : formula x}, \eqref{Isbell : 2 : formula x}, \eqref{Isbell : 4 : formula x}, and \eqref{Isbell : 6 : formula x} by $e$ on both sides yields
\begin{align}
eb &= ea_1 c_1;\label{Isbell : 1e : formula x}\\
ea_1 & = ed_1 a_2;\label{Isbell : 2e : formula x}\\
ed_i a_{2i+1} & = ed_{i+1}a_{2(i+1)} \text{ for }i = 1, \dots, n-1;\label{Isbell : 4e : formula x}\\
ed_n a_{2n+1}& = eb.\label{Isbell : 6e : formula x}
\end{align}
Equations \eqref{Isbell : 1e : formula x}--\eqref{Isbell : 6e : formula x}, together with \eqref{Isbell : 3 : formula x} and \eqref{Isbell : 5 : formula x}, imply 
\[
\A \vDash \psi_n(c_1, \dots, c_n, ed_1, \dots, ed_n, ea_1, a_2, \dots, a_{2n+1}, eb)
\]
by the definition of $\psi_n$.
\end{proof}

\begin{Proposition}\label{Prop : Isbell arguments technical}
Let $\A \in \mathsf{CM}$ and $c_1, \dots, c_n, d_1, \dots, d_n, a_1, \dots, a_{2n+1}, b \in A$ with $n > 0$ be such that
\[
\A \vDash \psi_n(c_1, \dots, c_n, d_1, \dots, d_n, a_1, \dots, a_{2n+1}, b).
\]
Then the following conditions hold:
\benroman
\item\label{Prop : Isbell arguments technical:1} if $a_2 = 1$, then $\A \vDash \varphi_{n-1}(a_1a_3, a_4, \dots, a_{2n+1}, b)$; \label{item : a2 neq 1}
\item\label{Prop : Isbell arguments technical:2} if $d_1$ is invertible and $a_1 = a_2$, then $\A \vDash \varphi_{n-1}(a_3, \dots, a_{2n+1}, b)$. \label{item : a1 neq a2}
\eroman      
\end{Proposition}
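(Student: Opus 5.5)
For both items, the plan is to reduce to \cref{prop:zigzag facts}: part (\ref{prop:zigzag facts:1}) removes the first zigzag step, and part (\ref{prop:zigzag facts:2}) lets us multiply the first argument and the value by a common element. Write $c_1, \dots, c_n, d_1, \dots, d_n$ for the given witnesses, so the equations \eqref{Isbell : 1 : formula x}--\eqref{Isbell : 6 : formula x} hold in $\A$. Since $n>0$, \cref{prop:zigzag facts}(\ref{prop:zigzag facts:1}) yields
\[
\A \vDash \psi_{n-1}(c_2, \dots, c_n, d_2, \dots, d_n, d_1a_3, a_4, \dots, a_{2n+1}, b).
\]
This covers $n=1$ as well, where it just says $d_1a_3 = b$.

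For (\ref{Prop : Isbell arguments technical:1}), assume $a_2 = 1$. Then \eqref{Isbell : 2 : formula x} gives $a_1 = d_1a_2 = d_1$, so $d_1a_3 = a_1a_3$. Substituting this into the displayed formula and existentially quantifying the witnesses gives $\A \vDash \varphi_{n-1}(a_1a_3, a_4, \dots, a_{2n+1}, b)$.

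For (\ref{Prop : Isbell arguments technical:2}), assume $d_1$ is invertible and $a_1 = a_2$. The idea is to cancel $d_1$ from the first argument $d_1a_3$ by applying \cref{prop:zigzag facts}(\ref{prop:zigzag facts:2}) to the displayed $\psi_{n-1}$ with $e = d_1^{-1}$. This gives
\[
\A \vDash \psi_{n-1}(c_2, \dots, c_n, d_1^{-1}d_2, \dots, d_1^{-1}d_n, a_3, a_4, \dots, a_{2n+1}, d_1^{-1}b).
\]
When $n=1$ this reads $a_3 = d_1^{-1}b$, which is still fine.

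The one point needing care is that the value has become $d_1^{-1}b$, so we must check $d_1^{-1}b = b$, that is, $d_1 b = b$. From \eqref{Isbell : 2 : formula x} and $a_1 = a_2$ we get $a_1 = d_1a_1$. Combining this with \eqref{Isbell : 1 : formula x} gives
\[
d_1b = d_1a_1c_1 = a_1c_1 = b.
\]
Multiplying by $d_1^{-1}$ then gives $b = d_1^{-1}b$, and quantifying the witnesses yields $\A \vDash \varphi_{n-1}(a_3, \dots, a_{2n+1}, b)$. I expect this identity $d_1b = b$ to be the only non-bookkeeping step; everything else follows directly from \cref{prop:zigzag facts}.
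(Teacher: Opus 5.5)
Your proposal is correct and follows essentially the same route as the paper: both items come from \cref{prop:zigzag facts}(\ref{prop:zigzag facts:1}), and (\ref{Prop : Isbell arguments technical:2}) additionally uses \cref{prop:zigzag facts}(\ref{prop:zigzag facts:2}) with $e = d_1^{-1}$ together with the identity $d_1 b = b$. The only difference is the order of steps. The paper first rewrites the value as $d_1 b$ and then multiplies, so the value comes out as $b$ directly. You multiply first and then check that $d_1^{-1}b = b$.
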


\begin{proof}

\eqref{Prop : Isbell arguments technical:1}: Suppose that $a_2 = 1$. Since the definition of $\psi_{n}$ implies $a_1  = d_1 a_2$, we obtain  $a_1=d_1$. So,  from \cref{prop:zigzag facts}\eqref{prop:zigzag facts:1} it follows that 
\[
\A \vDash \psi_{n-1}(c_2, \dots, c_n, d_2, \dots, d_n, a_1a_3, a_4, \dots, a_{2n+1}, b).
\]
Thus, the definition of $\varphi_{n-1}$ ensures that $\A \vDash \varphi_{n-1}(a_1a_3, a_4, a_5, \dots, a_{2n+1}, b)$.

\eqref{Prop : Isbell arguments technical:2}: Suppose that $d_1$ is invertible and $a_1 = a_2$. From \cref{Prop : Isb argument 0 -> y = 0} and the assumption that $a_1 = a_2$ it follows that
\[
b = d_1a_2c_1 = d_1 a_1c_1.
\]
The definition of $\psi_{n}$ implies  $b = a_1 c_1 $, and so the above display yields $b = d_1 b$.
Therefore,
\[
\A \vDash \psi_n(c_1, \dots, c_n, d_1, \dots, d_n, a_1, \dots, a_{2n+1}, d_1b).
\]
From \cref{prop:zigzag facts}\eqref{prop:zigzag facts:1} and the above display it follows that
\[
\A \vDash \psi_{n-1}(c_2, \dots, c_n, d_2, \dots, d_n, d_1a_3, a_4, \dots, a_{2n+1}, d_1b).
\]
Applying \cref{prop:zigzag facts}\eqref{prop:zigzag facts:2} with $e=d_1^{-1}$ to the above display, we obtain 
\begin{equation}\label{Eq : Luca new display : last round}
    \A \vDash \psi_{n-1}(c_2, \dots, c_n, d_1^{-1}d_2, \dots, d_1^{-1}d_n, a_3, a_4, \dots, a_{2n+1}, b).
\end{equation}
Hence, the definition of $\varphi_{n-1}$ implies  $\A \vDash \varphi_{n-1}(a_3, \dots, a_{2n+1}, b)$.
\end{proof}

  We recall that, in the context of varieties, implicit operations admit a  description in terms of \emph{pp formulas}, that is, formulas of the form $\exists x_1, \dots, x_n \varphi$, where $\varphi$ is a conjunction of equations. More precisely,  if $f$ is an implicit operation of a variety $\K$, there exist implicit operations $f_1, \dots, f_n$ of $\K$ definable by pp formulas such that $f^\A = f_1^\A \cup \dots \cup f_n^\A$ for every $\A \in \K$ (see \cite[Cor.\ 3.10]{CKMIMP}). 
  Consequently, the pp definable implicit operations of $\K$ form the building blocks of all the implicit operations of $\K$. We denote the class of pp definable implicit operations of a class of algebras $\K$ by $\imppp(\K)$.

\begin{Proposition}\label{Prop : Isbell are implicit operations}
Let $\K$ be a variety of monoids and $n \in \mathbb{N}$. Then  $\varphi_n$ defines a $(2n+1)$-ary member of $\imppp(\K)$.
\end{Proposition}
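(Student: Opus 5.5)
The plan is to check the two defining requirements of an implicit operation directly: $\varphi_n$ defines a partial function on every $\A \in \K$, and the resulting family of partial functions is preserved by homomorphisms. Since $\varphi_n$ is an existentially quantified conjunction of equations, it is a pp formula, so membership in $\imppp(\K)$ follows once $\varphi_n$ is shown to define an implicit operation. For $n = 0$ everything is trivial, because $\varphi_0(x,y) = x \thickapprox y$ defines the identity map. So assume $n \geq 1$, and for $\A \in \K$ set
\[
\dom(f^\A) = \{ \langle a_1, \dots, a_{2n+1} \rangle : \A \vDash \varphi_n(a_1, \dots, a_{2n+1}, b) \text{ for some } b \in A \},
\]
with $f^\A(a_1,\dots,a_{2n+1})$ equal to that $b$.

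\emph{Functionality.} This is the only substantive step, and no commutativity may be used since $\K$ is just a variety of monoids. Suppose $\A \vDash \psi_n(c_1,\dots,c_n,d_1,\dots,d_n,a_1,\dots,a_{2n+1},b)$ and $\A \vDash \psi_n(c'_1,\dots,c'_n,d'_1,\dots,d'_n,a_1,\dots,a_{2n+1},b')$. I would zigzag, alternating between the primed equations \eqref{Isbell : 2 : formula}, \eqref{Isbell : 4 : formula}, \eqref{Isbell : 6 : formula} and the unprimed equations \eqref{Isbell : 1 : formula}, \eqref{Isbell : 3 : formula}, \eqref{Isbell : 5 : formula}:
\begin{align*}
b' &= d'_n a_{2n+1} = d'_n a_{2n} c_n = d'_{n-1} a_{2n-1} c_n = d'_{n-1} a_{2n-2} c_{n-1} = \cdots\\
&= d'_1 a_2 c_1 = a_1 c_1 = b.
\end{align*}
The steps are, in order: \eqref{Isbell : 6 : formula} primed, \eqref{Isbell : 5 : formula} unprimed, then alternately \eqref{Isbell : 4 : formula} primed and \eqref{Isbell : 3 : formula} unprimed down the index $i$, and finally \eqref{Isbell : 2 : formula} primed and \eqref{Isbell : 1 : formula} unprimed. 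Each rewriting multiplies only on the outer side, so associativity alone suffices. Hence $b = b'$.

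\emph{Preservation by homomorphisms.} Let $h \colon \A \to \B$ be a homomorphism with $\A, \B \in \K$, and let $\langle a_1,\dots,a_{2n+1} \rangle \in \dom(f^\A)$ with value $b$, witnessed by $c_i, d_i$. Homomorphisms preserve equations, so the images $h(c_i), h(d_i)$ witness $\B \vDash \psi_n(h(\vec c), h(\vec d), h(\vec a), h(b))$. Thus $h(\vec a) \in \dom(f^\B)$, and by functionality $f^\B(h(\vec a)) = h(b)$.

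The main obstacle is the functionality chain, specifically getting the indices and the left/right placement right in the noncommutative setting. The chain above is arranged so that the $d'$'s always stay on the left and the $c$'s on the right.
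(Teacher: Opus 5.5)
Your argument is correct, but it takes a different route from the paper. The paper verifies nothing directly. It cites \cite[Thm.\ 3.15]{CKMIMP} for the case $\K = \mathsf{M}$ and then observes that restricting an implicit operation of a class to a subclass yields an implicit operation of the subclass. This holds because homomorphisms between members of the subclass are homomorphisms of the larger class, and the same formula still defines the restricted partial functions.

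You instead prove the content of that citation yourself. The zigzag chain $b' = d'_n a_{2n+1} = d'_n a_{2n} c_n = d'_{n-1} a_{2n-1} c_n = \cdots = d'_1 a_2 c_1 = a_1 c_1 = b$ alternates the primed equations (rewriting on the left) with the unprimed ones (rewriting on the right), and so gives functionality using associativity alone. Preservation by homomorphisms then follows at once: $\psi_n$ is a conjunction of equations, so you can push the witnesses $c_i, d_i$ forward.

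The paper's route is shorter and fits its reliance on the general theory of \cite{CKMIMP}. Yours is self-contained and shows that nothing beyond associativity is needed. It therefore applies uniformly to every class of monoids (indeed of semigroups), which makes the restriction step unnecessary.
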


\begin{proof}
For the case $\K = \mathsf{M}$, see \cite[Thm.\ 3.15]{CKMIMP}. As the restriction of an implicit operation of a class of algebras $\K$ to a subclass $\K' \subseteq\K$ is an implicit operation of $\K'$, we are done. 
\end{proof}

Let $\K$ be a class of algebras and $\Delta \subseteq \imppp(\K)$. We say that $\Delta$ is a
\emph{dominion base} for $\K$ when for all $\A \leq \B \in \K$ and $b \in \mathsf{d}_\K(\A, \B)$ there exist $f \in \Delta$ and $\langle a_1, \dots, a_n \rangle \in \mathsf{dom}(f^\B) \cap A^n$ such that $f^{\B}(a_1, \dots, a_n) = b$ (see \cite[Sec.\ 14]{CKMIMP}). 

\begin{exa}\label{Exa : dominion bases}
Given a variety $\K$ of commutative monoids and $n \in \mathbb{N}$, let $i_{\K, n}$ be the $(2n+1)$-ary member of $\imppp(\K)$ defined by $\varphi_n$ (see \cref{Prop : Isbell are implicit operations}). From \cref{Thm : Doms in CMon Vars} it follows that $\{ i_{\K, n} : n \in \mathbb{N} \}$ is a dominion base for $\K$.
\qed
\end{exa}

 Dominion bases and implicit operations are connected as follows (see \cite[Thm.\ 14.3]{CKMIMP}).

\begin{Theorem}\label{Thm : dominion bases}
    Let $\K$ be a variety with dominion base $\Delta$ and $f \in \imppp(\K)$ of arity $n$. Then there exist $g \in \Delta$ and $n$-ary terms $t_1, \dots, t_m$ of $\K$ such that for all $\A \in \K$ and $\langle a_1, \dots, a_n \rangle \in \mathsf{dom}(f^\A)$ we have
    \[
    \langle b_1, \dots, b_m\rangle\in \mathsf{dom}(g^\A)\, \, \text{ and }\, \, g^\A(b_1, \dots, b_m) = f^\A(a_1, \dots, a_n),
    \]
where $b_k = t_k^\A(a_1, \dots, a_n)$ for each $k \leq m$. 
\end{Theorem}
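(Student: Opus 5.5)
The plan is to reduce the statement to a single ``generic'' instance of $f$ inside a finitely presented member of $\K$, and then to transport the conclusion to arbitrary members of $\K$ along homomorphisms, using that implicit operations are preserved by homomorphisms.

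Since $f \in \imppp(\K)$, it is defined by a pp formula $\exists z_1, \dots, z_k\, \varphi(x_1, \dots, x_n, y, z_1, \dots, z_k)$ with $\varphi$ a conjunction of equations. Let $\Fb$ be the free algebra of $\K$ over the variables $\vec{x}, y, \vec{z}$, and let $\theta$ be the congruence of $\Fb$ generated by the pairs of terms occurring in the equations of $\varphi$. Set $\B = \Fb/\theta$, which lies in $\K$ because varieties are closed under $\HHH$. Let $\A$ be the subalgebra of $\B$ generated by $x_1/\theta, \dots, x_n/\theta$. By construction $\B \vDash \varphi(\vec{x}/\theta, y/\theta, \vec{z}/\theta)$. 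Hence $\langle x_1/\theta, \dots, x_n/\theta\rangle \in \mathsf{dom}(f^\B)$ and $f^\B(\vec{x}/\theta) = y/\theta$.

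The first key step is to show that $y/\theta \in \mathsf{d}_\K(\A, \B)$. Take homomorphisms $g, h \colon \B \to \C$ with $\C \in \K$ and $g{\upharpoonright}_A = h{\upharpoonright}_A$. Since $f$ is preserved by homomorphisms,
\[
g(y/\theta) = f^\C(g(\vec{x}/\theta)) = f^\C(h(\vec{x}/\theta)) = h(y/\theta).
\]
Because $\Delta$ is a dominion base, there exist $g \in \Delta$ of some arity $m$ and $b_1, \dots, b_m \in A$ with $\langle b_1, \dots, b_m \rangle \in \mathsf{dom}(g^\B)$ and $g^\B(b_1, \dots, b_m) = y/\theta$. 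As $A = \mathsf{Sg}^\B(\vec{x}/\theta)$, each $b_k$ has the form $t_k^\B(\vec{x}/\theta)$ for some $n$-ary term $t_k$. These terms $t_1, \dots, t_m$ and this $g$ are the required ones.

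The final step is the transfer to an arbitrary $\A' \in \K$. Take $\langle a_1, \dots, a_n\rangle \in \mathsf{dom}(f^{\A'})$. Since $f$ is defined by the pp formula, there are $\vec{c} \in A'$ with $\A' \vDash \varphi(\vec{a}, f^{\A'}(\vec{a}), \vec{c})$. By freeness and the definition of $\theta$, the assignment $\vec{x} \mapsto \vec{a}$, $y \mapsto f^{\A'}(\vec{a})$, $\vec{z} \mapsto \vec{c}$ induces a homomorphism $p \colon \B \to \A'$, because the generating pairs of $\theta$ lie in the kernel of the induced map on $\Fb$. Preservation of $g$ by $p$ gives
\[
\langle t_1^{\A'}(\vec{a}), \dots, t_m^{\A'}(\vec{a})\rangle = \langle p(b_1), \dots, p(b_m)\rangle \in \mathsf{dom}(g^{\A'})
\]
and
\[
g^{\A'}(p(b_1), \dots, p(b_m)) = p(y/\theta) = f^{\A'}(\vec{a}).
\]

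The main point to handle carefully is the construction of $\B$ as the algebra ``presented'' by $\varphi$. The whole argument rests on its universal property: any tuple satisfying $\varphi$ in a member of $\K$ receives a homomorphism from $\B$. That property is exactly what makes a single choice of $g$ and $t_1, \dots, t_m$ work uniformly across all of $\K$.
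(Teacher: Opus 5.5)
Your proof is correct. The paper does not prove this statement itself; it imports it from \cite[Thm.~14.3]{CKMIMP}, so there is no in-paper argument to compare against. Your route is the natural one: realize $f$ generically in the algebra $\B = \Fb/\theta$ presented by the matrix of the pp formula, observe that $y/\theta \in \mathsf{d}_\K(\mathsf{Sg}^\B(\vec{x}/\theta), \B)$ because $f$ is preserved by homomorphisms, apply the dominion base there, and push the resulting $g$ and $t_1, \dots, t_m$ into an arbitrary $\A' \in \K$ along the canonical homomorphism $p \colon \B \to \A'$.

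Two minor points. First, you use the letter $g$ both for one of the separating homomorphisms $\B \to \C$ and for the element of $\Delta$; these should be renamed apart. Second, in the degenerate case $n = 0$ over a language without constants, $\mathsf{Sg}^\B(\emptyset)$ is empty. The dominion-base step then needs the empty-algebra convention mentioned in the paper's footnote, which is immaterial for monoids.
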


In view of \cref{Exa : dominion bases}, the following is a special instance of \cref{Thm : dominion bases}.

\begin{Corollary}\label{Cor : dominion base}
    Let $\K$ be a variety of commutative monoids and $f \in \imppp(\K)$ of arity $n$. Then there exist $m \in \mathbb{N}$ and $n$-ary monoid terms $t_1, \dots, t_{2m+1}$ such that for all $\A \in \K$ and $\langle a_1, \dots, a_n \rangle \in \mathsf{dom}(f^\A)$ we have
    \[
    \langle b_1, \dots, b_{2m+1}\rangle\in \mathsf{dom}(i_{\K, m}^\A)\, \, \text{ and }\, \, i_{\K, m}^\A(b_1, \dots, b_{2m+1}) = f^\A(a_1, \dots, a_n),
    \]
where $b_k = t_k^\A(a_1, \dots, a_n)$ for each $k \leq 2m+1$. 
\end{Corollary}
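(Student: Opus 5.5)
This is a direct specialisation of \cref{Thm : dominion bases}, so the plan is to instantiate that theorem with the dominion base from \cref{Exa : dominion bases} and then unpack the conclusion.

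First, take $\Delta = \{ i_{\K, n} : n \in \mathbb{N} \}$. By \cref{Prop : Isbell are implicit operations} each $i_{\K,n}$ is a $(2n+1)$-ary member of $\imppp(\K)$. By \cref{Exa : dominion bases}, which rests on \cref{Thm : Doms in CMon Vars}, $\Delta$ is a dominion base for $\K$. The only point worth recording here is that the witnesses $a_1, \dots, a_{2n+1} \in A$ provided by \cref{Thm : Doms in CMon Vars} satisfy $\B \vDash \varphi_n(a_1,\dots,a_{2n+1},b)$. This says exactly that $\langle a_1,\dots,a_{2n+1}\rangle \in \mathsf{dom}(i_{\K,n}^\B) \cap A^{2n+1}$ and $i_{\K,n}^\B(a_1,\dots,a_{2n+1}) = b$, which is the definition of a dominion base.

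Next, apply \cref{Thm : dominion bases} to the given $f \in \imppp(\K)$ of arity $n$ and to this $\Delta$. It yields some $g \in \Delta$ together with $n$-ary terms $t_1, \dots, t_k$ of $\K$, where $k$ is the arity of $g$. Since $g \in \Delta$, we have $g = i_{\K,m}$ for some $m \in \mathbb{N}$, and therefore $k = 2m+1$. The conclusion of \cref{Thm : dominion bases} then reads literally as the displayed statement, with $b_j = t_j^\A(a_1,\dots,a_n)$.

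Finally, observe that terms of $\K$ are monoid terms, because $\K$ is a variety in the language of monoids. There is no real obstacle in this argument. The one step that needs care is checking that the dominion base hypothesis genuinely holds in the sense required by \cref{Thm : dominion bases}, and that step is exactly the content of \cref{Thm : Doms in CMon Vars}.
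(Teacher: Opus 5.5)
Your proposal is correct and is essentially the paper's own argument: the paper also obtains this corollary simply as a special instance of \cref{Thm : dominion bases}, applied to the dominion base $\{ i_{\K, n} : n \in \mathbb{N} \}$ from \cref{Exa : dominion bases}. Your unpacking of why the witnesses from \cref{Thm : Doms in CMon Vars} satisfy the dominion-base condition, and why $g = i_{\K,m}$ forces $2m+1$ terms, just makes that one-line deduction explicit.
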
 

\section{Extendable
implicit operations}

In general, the implicit operations of a variety $\K$ need not be total. Therefore, we say that an implicit operation $f$ of $\K$ is \emph{extendable} when for all $\A \in \K$ and  $\langle a_1, \dots, a_n \rangle \in \mathsf{dom}(f^\A)$ there exists an algebra $\B \in \K$ extending $\A$ such that $\langle a_1, \dots, a_n \rangle \in \mathsf{dom}(f^\B)$. The class of extendable implicit operations of $\K$ will be denoted by $\ext(\K)$, and that of pp definable extendable implicit operations of $\K$ by $\extpp(\K)$. The next result simplifies the task of defining extendable implicit operations (see 
\cite[Cor.\ 3.11 and Prop.\ 8.11(ii)]{CKMIMP}).

\begin{Proposition}\label{Prop : extendable trick} Let $\K$ be a variety and $\varphi(x_1, \dots, x_n, y)$ a pp formula such that for all $\A \in \K_\textsc{si}$ and $a_1, \dots, a_n \in A$ there exists a unique $b \in A$ such that $\A \vDash \varphi(a_1, \dots, a_n, b)$. Then $\varphi$ defines an $n$-ary member of $\extpp(\K)$.
\end{Proposition}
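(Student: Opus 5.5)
The plan is to verify the three properties directly: that $\varphi$ is functional on every member of $\K$, that the resulting partial functions are preserved by homomorphisms, and that every member of $\K$ extends to one on which the operation is total. Throughout, two standard facts about a pp formula $\varphi = \exists \vec z\, \bigwedge_j (s_j \thickapprox t_j)$ do the work:
\benroman
\item it is preserved by homomorphisms: if $\A \vDash \varphi(\vec a, b)$ with witnesses $\vec c$ and $h\colon \A \to \C$ is a homomorphism, then $h(\vec c)$ witnesses $\C \vDash \varphi(h(\vec a), h(b))$;
\item it is preserved by direct products: if each factor $\A_i$ satisfies $\varphi(\vec a(i), b(i))$ with witnesses $\vec c_i$, then the tuple $\langle \vec c_i : i \in I\rangle$ witnesses $\prod_i \A_i \vDash \varphi(\vec a, b)$.
\eroman

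\emph{Functionality on $\K$.} Let $\A \in \K$. By Birkhoff's Subdirect Decomposition Theorem, $\K = \III\SSS\PPP(\K_\textsc{si})$, so there is an embedding $\iota\colon \A \to \prod_{i\in I}\A_i$ with each $\A_i \in \K_\textsc{si}$ and each composite $\pi_i\circ\iota$ surjective. Suppose $\A \vDash \varphi(\vec a, b)$ and $\A \vDash \varphi(\vec a, b')$. Applying fact (i) to the homomorphism $\pi_i\circ\iota$, both $\pi_i\iota(b)$ and $\pi_i\iota(b')$ satisfy $\varphi(\pi_i\iota(\vec a), -)$ in $\A_i$. Uniqueness in $\A_i$ gives $\pi_i\iota(b) = \pi_i\iota(b')$ for every $i$. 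Since $\iota$ is injective, $b = b'$.

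\emph{The implicit operation.} Let $f^\A$ be the partial function whose graph is $\{\langle \vec a, b\rangle : \A \vDash \varphi(\vec a, b)\}$. By construction $f$ is defined by $\varphi$, and fact (i) shows that homomorphisms between members of $\K$ preserve membership in $\dom(f)$ and the value of $f$. Hence $f \in \imppp(\K)$.

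\emph{Extendability.} Keep the same embedding $\iota\colon \A \to \B_0 = \prod_i \A_i$, and note that $\B_0 \in \K$. For any $\vec a \in B_0^n$, each factor $\A_i$ contains some $b_i$ with $\A_i \vDash \varphi(\vec a(i), b_i)$, by the existence part of the hypothesis. By fact (ii), $\B_0 \vDash \varphi(\vec a, \langle b_i\rangle_{i \in I})$, so $f^{\B_0}$ is total. Replacing $\B_0$ by an isomorphic copy $\B \in \K$ that contains $\A$ as a subalgebra, with $\iota$ becoming the inclusion, gives an extension of $\A$ on which $f^\B$ is total. In particular $\vec a \in \dom(f^\B)$ for any $\vec a \in \dom(f^\A)$. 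Since $f$ is preserved by the inclusion $\A \to \B$, $f^\B$ also extends $f^\A$.

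The only step requiring real care is the functionality argument, which needs the subdirect embedding so that uniqueness can be transferred from the subdirectly irreducible factors. The remaining steps are routine preservation facts for pp formulas.
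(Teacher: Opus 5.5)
Your proof is correct. It differs from the paper's only in that the paper gives no argument: it imports the statement from \cite[Cor.\ 3.11 and Prop.\ 8.11(ii)]{CKMIMP}, the first result giving that $\varphi$ defines a member of $\imppp(\K)$ and the second giving extendability.

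You instead prove both parts directly from Birkhoff's subdirect decomposition $\A \hookrightarrow \prod_{i \in I}\A_i$ with $\A_i \in \K_\textsc{si}$:
\begin{itemize}
\item preservation of pp formulas under homomorphisms transfers uniqueness from the factors down to $\A$;
\item preservation under direct products transfers existence up to $\prod_{i \in I}\A_i \in \K$, which is therefore an extension of $\A$ on which $f$ is total.
\end{itemize}
The citation buys brevity and places the fact within the general theory of \cite{CKMIMP}. Your argument buys self-containment, and it shows that the only ingredients are the subdirect decomposition and these two preservation properties of pp formulas.

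It also establishes the right conclusion. As printed, the paper's definition of extendability quantifies over $\langle a_1, \dots, a_n \rangle \in \dom(f^\A)$, so it is trivially satisfied by taking $\B = \A$. The notion actually used (see the introduction and \cref{Thm : extendable : universal : SI}) requires every tuple in $A^n$ to enter the domain in some extension. Since you show that a single extension makes $f$ total, your proof covers this intended version. Your closing sentence could therefore be stated for arbitrary tuples in $A^n$ rather than only those in $\dom(f^\A)$.

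Finally, a trivial $\A$ is not subdirectly irreducible under the paper's convention, but that case is covered by taking $I = \emptyset$.
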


Notably, every member of a variety $\K$ can be 
``upgraded'' to one in which all the extendable implicit operations are total, in the following sense (see   \cite[Prop.\ 8.1 and Thm.\ 8.4]{CKMIMP}).

\begin{Theorem}\label{Thm : extendable : universal : SI}
    Let $\K$ be a variety and $\A \in \K$. Then there exists $\B \in \K$  with $\A \leq \B$ such that $f^\B$ is total and extends $f^\A$ for each $f  \in \ext(\K)$. In addition, when $\A \in \K_\textsc{si}$, the algebra $\B$ can be chosen in $\K_\textsc{si}$.
\end{Theorem}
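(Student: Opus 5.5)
The statement just quoted is \cref{Thm : extendable : universal : SI}, cited from \cite[Prop.\ 8.1 and Thm.\ 8.4]{CKMIMP}. A self-contained plan has two parts. First build an extension $\B$ of $\A$ in which every extendable operation is total. Then, for subdirectly irreducible $\A$, keep $\B$ subdirectly irreducible by choosing it through a maximal congruence.

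\emph{Existence.} Enumerate all pairs $\langle f, \vec a\rangle$ with $f \in \ext(\K)$ and $\vec a$ a tuple of the current algebra. By definition of extendability, for each such pair there is $\B_{f,\vec a} \geq \A$ in $\K$ with $\vec a \in \mathsf{dom}(f^{\B_{f,\vec a}})$.

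These one-at-a-time extensions are amalgamated over $\A$. Take the colimit of the diagram of inclusions $\A \to \B_{f,\vec a}$, which exists since $\K$ is cocomplete. Alternatively, use compactness on the theory consisting of the positive diagram and negative diagram of $\A$ together with sentences asserting that each $f$ is defined at each $\vec a$. Extendability supplies models for finite fragments, provided one proves that finitely many extensions can be realized simultaneously.

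Since implicit operations are defined by formulas and preserved by homomorphisms, any member of $\K$ into which $\A$ embeds and in which $f$ is defined at $\vec a$ will do. A value, once defined, is preserved along the embeddings, so $f^\B$ extends $f^\A$.

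Iterating this $\omega$ times and taking the union of the chain gives $\B$. The union stays in $\K$ for a variety, and it makes $f^\B$ total. This uses that the pp-definable pieces of $f$ (\cite[Cor.\ 3.10]{CKMIMP}) are preserved under directed unions: the existential witnesses persist, and uniqueness of values follows from the global definition.

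\emph{Main obstacle.} The delicate step is simultaneous extension, that is, showing that finitely many extendability witnesses amalgamate over $\A$ with $\A$ still embedding. I would handle it by enlarging each witness to a $\K$-algebra in which all of its previously defined values persist, and then arguing in the free product. The key tool there is preservation of implicit operations by homomorphisms: definedness transfers along the canonical maps. The remaining work is to ensure that the map from $\A$ stays injective, which I would get via the extension property applied to suitable quotients.

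\emph{SI refinement.} Let $\A \in \K_\textsc{si}$ with monolith generated by a pair $\langle c,d\rangle$, and let $\B$ be as above. By Zorn's lemma, choose a congruence $\theta$ of $\B$ maximal with respect to $\langle c,d\rangle \notin \theta$. Then $\B/\theta$ is subdirectly irreducible. Moreover $\theta \cap A^2 = \mathsf{id}_A$: if it were not, it would contain the monolith and hence $\langle c,d\rangle$. So $\A$ embeds in $\B/\theta$.

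Since implicit operations are preserved by the quotient map, each $f^{\B/\theta}$ is total and extends $f^\A$. Thus $\B/\theta$ is the required algebra in $\K_\textsc{si}$.
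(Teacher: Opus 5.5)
Your subdirectly irreducible refinement is correct and is the standard argument. (The paper gives no proof of this statement and only cites \cite{CKMIMP}, so I judge your proposal on its own merits.) A congruence $\theta$ of $\B$ maximal among those omitting a pair $\langle c,d\rangle$ that generates the monolith of $\A$ gives a subdirectly irreducible quotient $\B/\theta \in \K$. The restriction $\theta \cap A^2$ must be $\mathsf{id}_A$, and totality, as well as extending $f^\A$, transfer along the homomorphisms $\A \to \B \to \B/\theta$.

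The gap is in the existence part, exactly at the step you call the main obstacle. You propose to amalgamate the witnesses $\B_{f,\vec a}$ in a colimit or free product over $\A$, and then to recover injectivity on $A$ ``via the extension property applied to suitable quotients''. This cannot work in general. When $\K$ lacks amalgamation, the colimit of a family of extensions of $\A$ can identify distinct elements of $A$. For instance, in unital rings take $\A$ free on $x,a$: one extension can invert $x$, while another contains $y,w$ with $yx=0$ and $yw=a$; together these force $a=0$ in the pushout. Extendability only ever produces further extensions of a given algebra, so it cannot undo such a collapse. As written, neither your colimit route nor your compactness route, which needs finite simultaneous realizability, is completed.

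The missing observation is that no amalgamation is needed, because extendability applies to every member of $\K$, including the extensions you have already built. Suppose $\A \leq \C_1$ with $\vec a_1 \in \mathsf{dom}(f_1^{\C_1})$. Then $\vec a_2$ is still a tuple of $\C_1$, so extendability applied to $\C_1$ yields $\C_2 \geq \C_1$ with $\vec a_2 \in \mathsf{dom}(f_2^{\C_2})$. Meanwhile $\vec a_1 \in \mathsf{dom}(f_1^{\C_2})$, because the inclusion $\C_1 \to \C_2$ is a homomorphism.

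So proceed as follows. Well-order the pairs $\langle f, \vec a\rangle$ with $f \in \ext(\K)$ and $\vec a$ a tuple of $A$; these form a set, since an implicit operation is determined by its defining formula. Realize one pair at each successor stage of a chain of extensions, take unions at limit stages, and repeat the whole construction $\omega$ times. Varieties are closed under unions of chains, and each inclusion into the union is a homomorphism, so definedness obtained at earlier stages persists. This also makes your appeal to pp decompositions and directed unions unnecessary. The same successive-extension argument supplies finite satisfiability if you prefer compactness. With this replacement, the rest of your argument goes through.
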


In order to add a family of implicit operations $\mathcal{F} \subseteq \extpp(\K)$ to a variety $\K$, we proceed as follows. Let $\L$ be the language of $\K$ and $\L_\F$ the language obtained by adding to $\L$ a new $n$-ary function symbol $g_f$ for each $n$-ary $f \in \F$. Then, we expand every member $\A$ of $\K$ in which $\{ f^\A : f \in \F \}$ is a family of total functions to an algebra $\A[\L_\F]$ in the language $\L_\F$ by interpreting  $g_f$ as $f^\A$ for each $f \in \F$. In addition, for every $\mathsf{N} \subseteq \K$  let
\[
\mathsf{N}[\L_\F] = \{ \A[\L_\F] : \A \in \mathsf{N} \text{ and }\{ f^\A : f \in \F \} \text{ is a family of total functions} \}.
\]
The \emph{pp expansion} of $\K$ induced by $\F$ is $\SSS(\K[\L_\F])$ and represents the result of ``adding the implicit operations in $\mathcal{F}$ to $\K$''. 

\begin{exa}\label{Exa : inverses are interpolable}
Let $\K$ be a proper subvariety of $\mathsf{CM}$. We will prove that there exists a unary $f \in \extpp(\K)$ such that for every $\A \in \K$ and $a \in A$,
\[
\text{if $a^{-1}$ exists, then }a \in \mathsf{dom}(f^\A) \text{ and }f^\A(a) = a^{-1}.
\]
Consequently, $\mathcal{F} = \{ f \}$ induces a pp expansion of $\mathsf{K}$ in which the implicit operation of “taking  inverses” is interpolated by the term $g_f(x)$.

To prove the claim, recall that $\K$ is a proper subvariety of $\mathsf{CM}$. Together with
\cref{Prop : variety equalities} and \cref{Thm : classification of CMon-Vars}, this ensures the existence of  $m, n \in \mathbb{N}$ with $m > 0$ such that $\K = \V_{m, n}$. Then consider the pp formula
\[
\varphi(x,y) = (x^{n+1}y \thickapprox x^n) \sqcap (y^2x \thickapprox y).
\]
We will prove that $\varphi$ defines a unary member $f$ of $\extpp(\K)$. In view of \cref{Prop : extendable trick}, it suffices to show that for all $\A \in \K_\textsc{si}$ and $a \in A$ there exists a unique $b \in A$ such that $\A \vDash \varphi(a, b)$. So, consider $\A \in \K_\textsc{si}$ and $a \in A$. By \cref{Thm : char of SIs: inv or nil} we have two cases: either $a$ is $n$-nilpotent or it is invertible. 

First, suppose that $a$ is $n$-nilpotent, i.e., that $a^n$ is a zero element for $\A$. Together with the definition of $\varphi$, this yields $\A \vDash \varphi(a, a^n)$. Then consider $b \in A$ such that $\A \vDash \varphi(a, b)$. We need to prove that $b = a^n$. By the definition of $\varphi$ we have $b^2 a = b$. 
We will show by induction on $k$ that $b=b^{2^k}a^{2^k-1}$ for every $k \in \mathbb{N}$. In the base case, we have $k=0$, and the equality is straightforward to verify. For the inductive step, suppose that $b=b^{2^k}a^{2^k-1}$ for some $k \in \mathbb{N}$. We need to show that $b=b^{2^{k+1}} a^{2^{k+1}-1}$. Using the induction hypothesis and the identity $b^2 a = b$, we obtain
\[
b = b^{2^k}a^{2^k-1} = (b^2 a)^{2^k}a^{2^k-1} = b^{2^{k+1}} a^{2^k}a^{2^k-1} =b^{2^{k+1}} a^{2^{k+1}-1},
\]
as desired. Then consider $k \in \mathbb{N}$ such that $n \leq 2^{k+1}-1$. We have $b=c a^n$, where $c= b^{2^{k+1}} a^{2^{k+1}-n-1}$. Hence, $b=a^n$ because $a^n$ is a zero element of $\A$.

Next, we consider the case where $a$ is invertible. Together with the definition of $\varphi$, this yields $\A \vDash \varphi(a, a^{-1})$. Then consider $b \in A$ such that $\A \vDash \varphi(a, b)$. We need to prove that $b = a^{-1}$. By the definition of $\varphi$, we have $a^{n+1}b = a^n$. This yields
\[
b = (a^{-1})^{n+1}a^{n+1}b = (a^{-1})^{n+1}a^n = a^{-1}.
\]
Hence, we conclude that $\varphi$ defines a unary member $f$ of $\extpp(\K)$, as desired.

Lastly, from the definition of $\varphi$ it follows that $\A \vDash \varphi(a, a^{-1})$ for all $\A \in \K$ and $a\in A$ for which $a^{-1}$ exists. As $\varphi$ defines $f$, in this case we  have $a \in \mathsf{dom}(f^\A)$ and $f^{\A}(a) = a^{-1}$.
\qed
\end{exa}

\section{Beth companions}

A pp expansion of a variety $\K$ is said to be a \emph{Beth companion} of $\K$ when it has the strong Beth definability property or, equivalently, the strong epimorphism surjectivity property (see \cite[Thm.\ 11.6]{CKMIMP}).\ While a variety need not have a Beth companion, up to term equivalence it may possess only one  (see \cite[Thm.\ 11.7]{CKMIMP}). For this reason, we talk about \emph{the} Beth companion of $\K$ (when it exists). Our aim is to prove the following.

\begin{Theorem} \label{Thm : main CMon Vars}
    The following conditions are equivalent for a variety $\mathsf{K}$ of commutative monoids: 
    \benroman
    \item\label{item : main : 1} $\mathsf{K}$ has a Beth companion; 
    \item\label{item : main : 2} $\mathsf{K}$ has the strong epimorphism surjectivity property; 
    \item\label{item : main : 3} $\mathsf{K}$ is inverse.
    \eroman
\end{Theorem}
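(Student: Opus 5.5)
The three implications have very different weights. \textbf{(iii)$\Rightarrow$(ii)} and \textbf{(ii)$\Rightarrow$(i)} are short. \textbf{(i)$\Rightarrow$(iii)} is the real work: I would prove its contrapositive by exhibiting a concrete failure of interpolation in any candidate Beth companion.

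\textbf{(iii)$\Rightarrow$(ii).} Let $\K$ be inverse and $\A \leq \B \in \K$. Then $\A$ is an inverse commutative monoid, so $\A_\mathsf{s}$ is an inverse semigroup.
\begin{itemize}
\item By \cref{Thm : inverse semigroups : absolutely closed}, $\mathsf{d}_\mathsf{S}(\A_\mathsf{s},\B_\mathsf{s}) = A$.
\item By \cref{Thm : zigzag original}, membership in both $\mathsf{d}_\mathsf{S}$ and $\mathsf{d}_{\mathsf{CS}}$ is characterised by the same Isbell formulas evaluated in $\B_\mathsf{s}$. Hence $\mathsf{d}_{\mathsf{CS}}(\A_\mathsf{s},\B_\mathsf{s}) = \mathsf{d}_\mathsf{S}(\A_\mathsf{s},\B_\mathsf{s}) = A$.
\item By \cref{Prop : dom computed in CSG vs CMon} and \cref{Cor : compute doms of subvarieties in CMon}, $\mathsf{d}_\K(\A,\B) = A$.
\end{itemize}
So every dominion is trivial, which is exactly the strong epimorphism surjectivity property.

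\textbf{(ii)$\Rightarrow$(i).} Take $\F = \emptyset$. The induced pp expansion is $\K$ itself, which by (ii) and \cite[Thm.\ 11.6]{CKMIMP} has the strong Beth definability property. Hence $\K$ is its own Beth companion.

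\textbf{(i)$\Rightarrow$(iii).} Assume $\K$ is not inverse and, towards a contradiction, that the pp expansion $\SSS(\K[\L_\F])$ induced by some $\F \subseteq \extpp(\K)$ is a Beth companion.
\begin{itemize}
\item By \cref{Thm : char of inverse monoids}, $\mathsf{C}_2 \subseteq \K$. If $\K$ is proper, \cref{Thm : classification of CMon-Vars} gives $\K = \V_{m,n}$ with $m>0$ and $n \geq 2$. The case $\K = \mathsf{CM}$ is handled by the same argument, or directly via \cite[Thm.\ 14.1]{CKMIMP}.
\item The expansion has the strong Beth property, so the implicit operation $i_{\K,1}$ (defined by $\varphi_1$) must be interpolated by finitely many terms in $\L_\F$ on members of $\K[\L_\F]$.
\item I would build a subdirectly irreducible $\B \in \C_2 \subseteq \K$ with a zero, nilpotent elements $w,x_2,z$, and $x_1 = wx_2$, $x_3 = x_2z$, $y = wx_2z \neq 0$. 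I would arrange that $y$ is not a monoid product of $x_1,x_2,x_3$ (for instance, a Rees quotient of a free commutative monoid by a suitable ideal, chosen to be SI with $y$ in the monolith region).
\item By \cref{Thm : extendable : universal : SI}, extend $\B$ to an SI member $\B^*$ of $\K$ on which all of $\F$ is total.
\item The aim is to show that the monoid subuniverse generated by $x_1,x_2,x_3$, together with $0$ and the invertibles, is closed under every $g_f$ and still omits $y$. This contradicts interpolation of $y = i_{\K,1}(x_1,x_2,x_3)$.
\end{itemize}

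\textbf{Main obstacle: closure under the $g_f$.} By \cref{Cor : dominion base}, each $f \in \F$ is computed as $i_{\K,m}(t_1,\dots,t_{2m+1})$ for monoid terms $t_k$. So I must show that when Isbell's formula $\varphi_m$ holds in $\B^*$ with arguments in the candidate subuniverse, the output stays in it. In $\B^*$ every element is $n$-nilpotent or invertible (\cref{Thm : char of SIs: inv or nil}). I would induct on $m$ using three reductions:
\begin{itemize}
\item If some argument is $0$, the output is $0$ by \cref{Cor: : arg in Isb = 0 - > value 0}.
\item If $a_2 = 1$, or if $d_1$ is invertible and $a_1 = a_2$, then \cref{Prop : Isbell arguments technical} reduces $m$.
\item Otherwise, the remaining nondegenerate configurations must be excluded. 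This needs a careful choice of $\B$, for example one where all nonzero products of nonunits have bounded length, so that the zigzag equations force some factor to be zero or trivial.
\end{itemize}

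The delicate part is exactly this last exclusion: choosing $\B$ so that $\varphi_1$ produces $y$ from $x_1,x_2,x_3$, while no $\varphi_m$ applied to terms in $x_1,x_2,x_3$ ever produces $y$. I expect this to be the main obstacle.
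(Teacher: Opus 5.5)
Your arguments for (iii)$\Rightarrow$(ii) and (ii)$\Rightarrow$(i) are correct and match the paper. The gap is in (i)$\Rightarrow$(iii), and it is not only that the last step is left open: the reduction you propose cannot work as stated.

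You want to show that whenever $\varphi_m$ holds in $\B^*$ with arguments in your candidate set $S$, the output lies in $S$. But $\B^* \vDash \varphi_1(x_1,x_2,x_3,y)$ with $x_1,x_2,x_3 \in S$ and $y \notin S$. This is exactly the configuration your contradiction relies on, so no choice of $\B$ can ``exclude the remaining nondegenerate configurations''. Your closing requirement (that $\varphi_1$ produces $y$ from $x_1,x_2,x_3$ while no $\varphi_m$ applied to terms in $x_1,x_2,x_3$ does) is self-contradictory, since variables are terms. An induction on a single instance of an Isbell formula, using \cref{Cor: : arg in Isb = 0 - > value 0} and \cref{Prop : Isbell arguments technical}, therefore cannot close.

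What separates an operation $g_f$ from $i_{\K,1}$ is this: because $g_f^{\B^*}$ is total, its representation $i_{\K,m}(t_1,\dots,t_{2m+1})$ from \cref{Cor : dominion base} holds with the \emph{same} terms at every tuple of $\B^*$. The contradiction must come from evaluating those terms at a second tuple. Separately, closing $S$ under each $g_f$ at every tuple from $S$ is far stronger than what is needed, and you give no way to control it.

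The missing idea, as in the paper, is to work with a single operation rather than with each $f \in \F$.
\begin{enumerate}
\item Use \cref{Prop : strange condition : extend} to obtain one $g \in \extpp(\K)$ interpolating $i_{\K,1}$ on $\K[\L_\F]$. Alternatively, note that the $\L_\F$-term sending $x_1,x_2,x_3$ to $y$ itself defines a pp-definable extendable implicit operation that is total on $\B^*$.
\item Take $\B = \{0,1\}^3 \cup \{0\}$. This is your Rees quotient, with $w = \langle 1,0,0\rangle$, $x_2 = \langle 0,1,0\rangle$, $z = \langle 0,0,1\rangle$. Let $\A$ be a subdirectly irreducible extension of $\B$ on which $g^\A$ is total, so that $g^\A(x_1,x_2,x_3) = y$.
\item Write $g^\A = i_{\K,n}(t_1,\dots,t_{2n+1})$ on $A - \{0\}$ with $n$ minimal.
\item Every product of two of $x_1,x_2,x_3$ is $0$, so \cref{Cor: : arg in Isb = 0 - > value 0} gives $t_j \in \{x_1,x_2,x_3,1\}$. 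Also $n \neq 0$, since $y \notin \{1,x_1,x_2,x_3\}$.
\item Minimality together with \cref{Prop : Isbell arguments technical} excludes $t_2 = 1$ and $t_1 = t_2$. In the latter case, $d_1 t_1 = t_1 \neq 0$ forces $d_1$ to be invertible by \cref{Thm : char of SIs: inv or nil}.
\item Hence, say, $t_2 = x_1$ and $t_1 \in \{x_2,x_3,1\}$. Evaluating at $\langle w,1,1\rangle$ gives $1 = d_1 w$, so $1 = d_1^2 w^2 = 0$, a contradiction.
\end{enumerate}
This evaluation at a tuple other than $\langle x_1,x_2,x_3\rangle$ is precisely the step your plan lacks.
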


\begin{Remark}
    As a variety is its own Beth companion precisely when it has the strong epimorphism surjectivity property (see \cite[Thms.\ 11.9(vi) and 11.6]{CKMIMP}),
we deduce that if a variety of commutative monoids $\K$ has a Beth companion, then $\K$ is its own Beth companion.
\qed
\end{Remark}

The proof of \cref{Thm : main CMon Vars} relies on the next observation (see \cite[Prop.\ 14.6]{CKMIMP}).

\begin{Proposition}\label{Prop : strange condition : extend}
Let $\K$ be a variety with a Beth companion of the form $\SSS(\K[\L_\F])$. Then for every $f \in \imppp(\K)$ there exists $g \in \extpp(\K)$ such that for all $\A[\L_\F] \in \K[\L_\F]$ and $\langle a_1, \dots, a_n \rangle \in \mathsf{dom}(f^\A)$ we have
\[
\langle a_1, \dots, a_n \rangle \in \mathsf{dom}(g^\A) \, \, \text{ and }\, \, f^\A(a_1, \dots, a_n) = g^\A(a_1, \dots, a_n).
\]
\end{Proposition}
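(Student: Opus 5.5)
The plan is to use the strong Beth definability property of the Beth companion $\mathsf{M} = \SSS(\K[\L_\F])$ to interpolate $f$ by a \emph{single} $\L_\F$-term $t$. I would then show that any $\L_\F$-term, unfolded into a pp formula in the language of $\K$, defines a member of $\extpp(\K)$.

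First, let $\varphi(x_1, \dots, x_n, y)$ be a pp formula defining $f$ in $\K$. Since $\varphi$ is in the language of $\K$ and homomorphisms of $\mathsf{M}$ are in particular homomorphisms of the $\K$-reducts, $\varphi$ also defines an implicit operation $f'$ of $\mathsf{M}$. On each $\A[\L_\F] \in \K[\L_\F]$, $f'$ agrees with $f^\A$.

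Second, I would get a single interpolating term. Consider the algebra of $\mathsf{M}$ finitely presented by the conjunction of equations inside $\varphi$, with generators $x_1, \dots, x_n, y$ and the witnesses of the existential quantifiers. In it, take the subalgebra generated by the images of $x_1, \dots, x_n$. Because $f'$ is implicit, the image of $y$ lies in the dominion of that subalgebra. The strong epimorphism surjectivity property of $\mathsf{M}$ (equivalent to strong Beth, \cite[Thm.\ 11.6]{CKMIMP}) then says this dominion equals the subalgebra. Hence $y = t(x_1, \dots, x_n)$ there for some $\L_\F$-term $t$. By the universal property of the presented algebra, $f'^{\B}(\vec a) = t^{\B}(\vec a)$ for every $\B \in \mathsf{M}$ and $\vec a \in \mathsf{dom}(f'^{\B})$. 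Making this free-presentation argument rigorous in a class closed only under $\SSS$ (not a variety) is the step I expect to be the main obstacle. If it fails, I would fall back on compactness: obtain finitely many terms $t_1, \dots, t_k$ and try to merge them, which is harder.

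Third, I would translate $t$ into a pp formula $\chi_t(\vec x, y)$ in the language of $\K$. Proceed by induction on $t$: introduce an existentially quantified variable for each subterm, and for each occurrence $g_h(s_1, \dots, s_r)$ with $h \in \F$ conjoin the pp formula defining $h$. For every $\A \in \K$, $\chi_t$ defines the partial function obtained by composing the $h^\A$ and the monoid operations. By induction, it is preserved by homomorphisms because each $h$ is. On $\A$ with all $h^\A$ total it is total and coincides with $t^{\A[\L_\F]}$.

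Fourth, I would check that this operation $g$ is extendable. Given $\A \in \K$ and $\vec a \in \mathsf{dom}(g^\A)$, \cref{Thm : extendable : universal : SI} gives $\B \in \K$ with $\A \leq \B$ in which every $h \in \F \subseteq \ext(\K)$ is total and extends $h^\A$. Then $g^\B$ is total and extends $g^\A$, so $g \in \extpp(\K)$. Finally, for $\A[\L_\F] \in \K[\L_\F]$ and $\vec a \in \mathsf{dom}(f^\A)$ we get $\vec a \in \mathsf{dom}(g^\A)$, since $g^\A$ is total. We also get $g^\A(\vec a) = t^{\A[\L_\F]}(\vec a) = f^\A(\vec a)$, which is exactly the conclusion of the proposition.
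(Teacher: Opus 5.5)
Your argument is correct. The paper gives no proof of its own here; it imports the statement from \cite[Prop.\ 14.6]{CKMIMP}, so there is nothing internal to compare against. Your route is a sound self-contained proof: obtain a single interpolating $\L_\F$-term from the Beth companion, unfold it into a pp formula of $\K$, and get extendability from \cref{Thm : extendable : universal : SI}.

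The step you flagged as the main obstacle goes through, because your $\mathsf{M} = \SSS(\K[\L_\F])$ is closed under $\PPP$ as well as $\SSS$. Pp formulas are preserved by direct products. So if every $h \in \F$ is total on each $\A_i$, then $h^{\prod_i \A_i}$ is total and computed componentwise. Hence $\K[\L_\F]$ is closed under $\PPP$, and therefore so is $\SSS(\K[\L_\F])$.

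Closure under $\SSS$ and $\PPP$ is all you need for the $\mathsf{M}$-algebra presented by the equations in the matrix of $\varphi$. Let $\mathbf{T}(X)$ be the $\L_\F$-term algebra on the variables of $\varphi$. Let $\theta$ be the intersection of all congruences $\theta'$ of $\mathbf{T}(X)$ that contain those equations and satisfy $\mathbf{T}(X)/\theta' \in \mathsf{M}$. The presented algebra is $\mathbf{T}(X)/\theta$, which lies in $\mathsf{M}$ because it embeds into the product of the $\mathbf{T}(X)/\theta'$. Every assignment into some $\C \in \mathsf{M}$ that satisfies the equations factors through it. The reason is that the kernel of the induced homomorphism $\mathbf{T}(X) \to \C$ is one of the $\theta'$, since its quotient is isomorphic to a subalgebra of $\C$.

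With this in hand, your dominion argument yields a single term, and no compactness or merging of terms is needed. You could even bypass dominions: apply the strong Beth property of $\mathsf{M}$ to $f'$ at the generators of the presented algebra. By the universal property, the term that works there works everywhere.

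Two minor points. In Step 3, ``monoid operations'' should read ``basic operations of $\L$'', since $\K$ is an arbitrary variety in this statement. In Step 4, the tuple should be arbitrary in $A^n$ rather than in $\mathsf{dom}(g^\A)$. The paper's wording of extendability contains a slip, under which every operation would be trivially extendable with $\B = \A$. Your argument already covers the intended definition, because $g^\B$ is total.
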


We are now ready to prove \cref{Thm : main CMon Vars}

\begin{proof}   
    (\ref{item : main : 3})$\Rightarrow$(\ref{item : main : 2}): It suffices to show that $\mathsf{d}_\K(\A, \B) = A$ for all $\A \leq \B \in \mathsf{K}$. So, consider $\A \leq \B \in \K$. Observe that $\A_\mathsf{s} \leq \B_\mathsf{s} \in \mathsf{S}$. Moreover, $\A_\mathsf{s}$ is an inverse semigroup because $\A$ is an inverse monoid by assumption. Hence, $\mathsf{d}_\mathsf{S}(\A_\mathsf{s}, \B_\mathsf{s}) = A$ by \cref{Thm : inverse semigroups : absolutely closed}. Together with Theorems \ref{Thm : zigzag original} and \ref{Thm : Doms in CMon Vars}, this yields $\mathsf{d}_\K(\A, \B) = A$.

(\ref{item : main : 2})$\Rightarrow$(\ref{item : main : 1}): Since $\K$ is a pp expansion of itself, the assumption ensures that $\K$ is its own Beth companion.

    (\ref{item : main : 1})$\Rightarrow$(\ref{item : main : 3}): Let $\B$ be the commutative monoid\footnote{The definition of the monoid $\B$ is reminiscent of Higgings' \cite[Sec.\ 2]{HigcSG}.} with universe
    \[
    B = \{ 0, 1 \}^3 \cup  \{0\},
    \]
    neutral element $\langle 0, 0, 0 \rangle$, and multiplication defined for every $a, b \in B$ as
\[
ab = \begin{cases}
\langle k_1 + k_2, m_1 + m_2, n_1 + n_2 \rangle &\text{ if }a = \langle k_1, m_1, n_1 \rangle, b= \langle k_2, m_2, n_2 \rangle,\\
& \text{ and }\langle k_1 + k_2, m_1 + m_2, n_1 + n_2\rangle \in \{ 0, 1 \}^3;\\
0 &\text{ otherwise.}
\end{cases}
\]

We begin with the following observation.
\begin{Claim}\label{Claim : B invertible}
  The monoid  $\B$ is a subdirectly irreducible member of $\mathsf{C}_2$.
\end{Claim}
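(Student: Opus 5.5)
We have to show two things: $\B \vDash x^2 \thickapprox x^3$, and $\mathsf{id}_B$ is completely meet irreducible in $\Con(\B)$. Throughout we write $e = \langle 0,0,0\rangle$ for the neutral element, $u = \langle 1,1,1\rangle$, and $0$ for the extra element. Note that $0$ is a zero element of $\B$.

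\emph{Membership in $\mathsf{C}_2$.} Commutativity is immediate since the operation is defined coordinatewise by addition in $\mathbb{N}$. For associativity, observe that a product of elements of $\{0,1\}^3$, however bracketed, equals the coordinatewise sum if that sum lies in $\{0,1\}^3$. Otherwise some partial sum already leaves $\{0,1\}^3$, or the final sum does, so the result is $0$. This is because coordinates only increase, and once $0$ appears it absorbs everything. So $\B$ is a commutative monoid. For the identity: $e^2 = e = e^3$ and $0^2 = 0 = 0^3$. If $a \in \{0,1\}^3 - \{e\}$, then some coordinate of $a$ equals $1$, so $a^2$ has a coordinate $2$ and $a^2 = 0 = a^3$. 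Hence $\B \vDash x^2 \thickapprox x^3$.

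\emph{Subdirect irreducibility.} The plan is to show that every congruence $\theta \neq \mathsf{id}_B$ contains the pair $\langle u, 0\rangle$. Then $\mathsf{Cg}(u,0)$ is the least nontrivial congruence (the monolith), which gives complete meet irreducibility of $\mathsf{id}_B$.

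So let $\langle a,b\rangle \in \theta$ with $a \neq b$. Since the situation is symmetric, we split into two cases.
\begin{enumerate}[(1)]
\item If $b = 0$ and $a \in \{0,1\}^3$, multiply both sides by $c = u - a$, computed coordinatewise, which lies in $\{0,1\}^3$. This gives $ac = u$ and $bc = 0$, so $\langle u,0\rangle \in \theta$.
\item If $a, b \in \{0,1\}^3$ are distinct, they differ in some coordinate $i$, say $a_i = 0$ and $b_i = 1$. Let $c$ have a $1$ in coordinate $i$ and $0$ elsewhere. Then $bc = 0$, since coordinate $i$ becomes $2$, while $ac = a + c \in \{0,1\}^3$. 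Thus $\langle a+c, 0\rangle \in \theta$, and we are back in case (1).
\end{enumerate}
In both cases we obtain $\langle u,0\rangle \in \theta$, which completes the argument. The only step requiring care is associativity. It is handled cleanly by the observation that $\B$ is the Rees quotient of the free commutative monoid $\langle \mathbb{N}^3; +, \langle 0,0,0\rangle\rangle$ by the ideal $\mathbb{N}^3 - \{0,1\}^3$. This subset is indeed an ideal, since adding a vector to one with a coordinate $\geq 2$ keeps that coordinate $\geq 2$. Therefore $\B$ is a commutative monoid automatically.
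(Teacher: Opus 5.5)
Your proof is correct and is essentially the paper's argument: both show that $\langle 0, \langle 1,1,1\rangle\rangle$ lies in every congruence of $\B$ other than $\mathsf{id}_B$ by multiplying by suitable elements of $\{0,1\}^3$. The only difference is cosmetic: when $a, b \in \{0,1\}^3$, the paper multiplies directly by the complement $\langle 1-k_1, 1-m_1, 1-n_1\rangle$ of $a$ and reaches $\langle 0, \langle 1,1,1\rangle\rangle$ in one step, whereas you first multiply by a unit vector and then reduce to your first case, and your Rees-quotient remark supplies the associativity check that the paper leaves implicit.
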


\begin{proof}[Proof of the Claim]
    The definition of $\B$ ensures that it is a commutative monoid and $\B \vDash x^2 \thickapprox x^3$. As $\mathsf{C}_2$ is axiomatized relative to $\mathsf{CM}$ by $x^2 \thickapprox x^3$, we obtain $\B \in \mathsf{C}_2$. Next, we will show that $\B$ is subdirectly irreducible. To this end, it will be enough to prove that
    \[
    \langle 0, \langle 1, 1, 1 \rangle \rangle \in \bigcap (\mathsf{Con}(\B) - \{\mathsf{id}_{B}\}).
    \]
    Consider $\theta \in \mathsf{Con}(\B) - \{\mathsf{id}_{B}\}$. We will prove that $\langle 0, \langle 1, 1, 1 \rangle \rangle \in \theta$. As $\theta \ne \mathsf{id}_B$, there exists $\langle a, b \rangle \in \theta$ with $a \ne b$. We have two cases: either $0 \in \{ a, b \}$ or $0 \notin \{ a, b \}$. Suppose first that $0 \in \{ a, b \}$. By symmetry we may assume that $a = 0$. Since $b \ne a = 0$ and $B = \{ 0, 1 \}^3 \cup \{ 0 \}$, we have $b = \langle k, m, n \rangle$ for some $k, m, n \in \{ 0, 1 \}$. Then $\langle 1-k, 1-m, 1-n \rangle \in \{ 0, 1 \}^3 \subseteq B$. From $\langle 0, \langle k, m, n \rangle\rangle = \langle a, b \rangle \in \theta$ it follows that
    \[
    \langle 0, \langle 1, 1, 1 \rangle \rangle = \langle 0 \cdot \langle 1-k, 1-m, 1-n \rangle, \langle k, m, n \rangle \cdot \langle 1-k, 1-m, 1-n \rangle\rangle \in \theta.
    \]
Lastly, we consider the case where $0 \notin \{ a, b \}$. As $B = \{ 0, 1 \}^3 \cup \{ 0 \}$, we have  $a = \langle k_1, m_1, n_1 \rangle$ and $b = \langle k_2, m_2, n_2 \rangle$ with $k_i, m_i, n_i \in \{ 0, 1 \}$. 
Since $a \ne b$, by symmetry we may assume that $k_1 \ne k_2$ and, in particular, that $k_1 < k_2$. The latter means that $k_1 = 0$ and $k_2 = 1$. Then
consider $\langle 1-k_1, 1-m_1, 1-n_1 \rangle \in \{ 0, 1 \}^3 \subseteq B$. Since $k_2 = 1$ and $1-k_1 = 1-0 = 1$, we have $k_2 + (1 - k_1) = 2$, whence $\langle k_2, m_2, n_2 \rangle \cdot \langle 1-k_1, 1-m_1, 1-n_1 \rangle = 0$. On the other hand, $\langle k_1, m_1, n_1 \rangle \cdot \langle 1-k_1, 1-m_1, 1-n_1 \rangle = \langle 1, 1, 1 \rangle$. Together with $\langle \langle k_1, m_1, n_1 \rangle, \langle k_2, m_2, n_2 \rangle\rangle = \langle a, b \rangle \in \theta$, this yields $\langle 0, \langle 1, 1, 1 \rangle \rangle \in \theta$.
\end{proof}

Suppose, with a view to contradiction,  that $\K$ has a Beth companion and is not inverse.
Let $\SSS(\K[\mathscr{L}_\mathcal{F}])$ be the Beth companion of $\K$ and recall that $\mathcal{F} \subseteq \extpp(\K)$ by the definition of a pp expansion. Moreover, recall that $i_{\K, 1}$ is a ternary member of $\imppp(\K)$ defined by the Isbell's formula $\varphi_1$ (see \cref{Prop : Isbell are implicit operations} and \cref{Exa : dominion bases}). By \cref{Prop : strange condition : extend} there exists $g \in \extpp(\K)$ such that 
    \begin{equation} \label{Eq : g interpol f_1}
        \langle a_1, a_2, a_3 \rangle \in \mathsf{dom}(g^{\C}) \, \, \text{ and } \, \, i_{\K, 1}^\C(a_1, a_2, a_3) = g^{\C}(a_1, a_2, a_3)
    \end{equation}
    for all $\langle a_1, a_2, a_3 \rangle \in \mathsf{dom}(i_{\K, 1}^\C)$ and $\C[\L_\F] \in \K[\L_\F]$.

    As $\K$ is not inverse, 
    \cref{Thm : char of inverse monoids} and \cref{Claim : B invertible} guarantee that $\B \in \K_\textsc{si}$. Therefore, we can apply \cref{Thm : extendable : universal : SI}, obtaining some 
   \begin{equation} \label{Eq : properties of D}
    \A \in \K_\textsc{si} \text{ with }\B \leq \A \text{ such that }f^\A \text{ is total for every }f \in \mathcal{F} \cup \{ g \}.
\end{equation} 

\begin{Claim}\label{Claim : zero element for A}
The monoid $\A$ possesses a zero element, namely, $0$.
\end{Claim}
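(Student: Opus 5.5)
The plan is to combine \cref{Prop : zeros preserved in extensions} with the fact that $0$ is idempotent. Since $\B \leq \A$, the element $0 \in B$ is also an element of $A$. In $\B$ we have $0 \cdot 0 = 0$, because $0$ is the zero element of $\B$ by the definition of the multiplication. As $\B$ is a submonoid of $\A$, this equality also holds in $\A$, so $0 = 0^2$ in $\A$. Moreover, $\A \in \K_\textsc{si} \subseteq \CMon_\textsc{si}$ by \eqref{Eq : properties of D}. Therefore \cref{Prop : zeros preserved in extensions} gives that either $0 = 1^\A$ or $0$ is a zero element of $\A$.

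It remains to exclude $0 = 1^\A$. Since $\B$ is a submonoid of $\A$, the neutral element of $\A$ is that of $\B$, namely $\langle 0,0,0\rangle$. But $0 \ne \langle 0,0,0\rangle$ in $B = \{0,1\}^3 \cup \{0\}$, where $0$ is a new element not lying in $\{0,1\}^3$. Hence $0 \neq 1^\A$, and so $0$ is a zero element of $\A$. By \cref{Prop : basic properties}(\ref{item : basic properties : 1}) it is the unique one.

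There is no serious obstacle. The only points needing care are that subalgebras of monoids share the neutral element and that $0$ is distinct from $\langle 0,0,0\rangle$ in $\B$; the real work is done by \cref{Prop : zeros preserved in extensions}, which in turn rests on Grillet's theorem via \cref{Prop : universal sentences}.
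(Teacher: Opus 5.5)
Your proof is correct and follows the paper's argument exactly: you observe that $0 = 0^2$ and $0 \neq 1^\A = \langle 0,0,0\rangle$ because $\B \leq \A$, and then apply \cref{Prop : zeros preserved in extensions} to the subdirectly irreducible monoid $\A$. The remark on uniqueness via \cref{Prop : basic properties} is a harmless addition that the paper leaves implicit here.
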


\begin{proof}[Proof of the Claim]
Since $\B \leq \A$, we have $0=0^2$ and $0 \neq 1$. As $\A$ is subdirectly irreducible, \cref{Prop : zeros preserved in extensions} implies that $0$ is a zero element of $\A$.
\end{proof}

\begin{Claim}\label{Claim : value of 1st Isbell}
    We have 
    \[
    \langle \langle 1, 1, 0 \rangle, \langle 0, 1, 0 \rangle, \langle 0, 1, 1 \rangle \rangle \in \mathsf{dom}(i_{\K, 1}^\A) \, \, \text{ and }\, \, i_{\K, 1}^\A(\langle 1, 1, 0 \rangle, \langle 0, 1, 0 \rangle, \langle 0, 1, 1 \rangle) = \langle 1, 1, 1\rangle.
\]
\end{Claim}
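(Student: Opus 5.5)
We need witnesses $w,z$ in $A$ with $y = w x_2 z$, $w x_2 = x_1$, $x_2 z = x_3$, where $x_1=\langle 1,1,0\rangle$, $x_2=\langle 0,1,0\rangle$, $x_3=\langle 0,1,1\rangle$, $y=\langle 1,1,1\rangle$. Note that $\varphi_1$ is logically equivalent, in commutative monoids, to this simplest Isbell formula: $\psi_1(z_1,w_1,x_1,x_2,x_3,y)$ consists of $y = x_1z_1$, $x_1 = w_1x_2$, $x_2z_1=x_3$ and $w_1x_3 = y$. The plan is to exhibit explicit witnesses already in $\B$, and then transfer to $\A$ using $\B \leq \A$ together with the fact that pp formulas are preserved under extensions. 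Finally, we use that $i_{\K,1}$ is defined by $\varphi_1$ (\cref{Prop : Isbell are implicit operations}), so the unique value is the witnessed $y$.

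Take $z_1 = \langle 0,0,1\rangle$ and $w_1 = \langle 1,0,0\rangle$ in $B$. Then I check each equation of $\psi_1$ in $\B$, with all sums staying inside $\{0,1\}^3$:
\[
x_1z_1 = \langle 1,1,1\rangle,\qquad w_1x_2 = \langle 1,1,0\rangle = x_1,\qquad x_2z_1 = \langle 0,1,1\rangle = x_3,\qquad w_1x_3 = \langle 1,1,1\rangle.
\]
Thus $\B \vDash \psi_1(z_1,w_1,x_1,x_2,x_3,\langle 1,1,1\rangle)$, and hence $\B\vDash\varphi_1(x_1,x_2,x_3,\langle1,1,1\rangle)$.

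Since $\B \leq \A$ and $\psi_1$ is a conjunction of equations, the same elements witness $\A \vDash \psi_1(\dots)$, and so $\A\vDash \varphi_1(x_1,x_2,x_3,\langle 1,1,1\rangle)$. As $\varphi_1$ defines $i_{\K,1}$ on $\A\in\K$, this gives exactly that the triple lies in $\mathsf{dom}(i^\A_{\K,1})$ and that its value is $\langle1,1,1\rangle$. Functionality is automatic because the formula defines a partial function.

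There is no real obstacle here. The only point needing care is to make sure that the products are computed in $\{0,1\}^3$ without overflow to $0$. Each product adds coordinates with disjoint supports, so no sum reaches $2$.
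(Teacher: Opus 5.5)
Your proposal is correct and matches the paper's proof: you use the same witnesses $z_1 = \langle 0,0,1\rangle$ and $w_1 = \langle 1,0,0\rangle$ to verify $\psi_1$ in $\B$. You then transfer $\varphi_1$ to $\A$ using $\B \leq \A$, the preservation of pp formulas under extensions, and the fact that $\varphi_1$ defines $i_{\K,1}$, exactly as the paper does.
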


\begin{proof}[Proof of the Claim]
From the definitions of $\B$ and $\psi_1$ 
it follows that
\[
\B \vDash \psi_1(\langle 0, 0, 1\rangle, \langle 1, 0, 0 \rangle, \langle 1, 1, 0 \rangle, \langle 0, 1, 0 \rangle, \langle 0, 1, 1 \rangle, \langle 1, 1, 1 \rangle).
\]
By the definition of $\varphi_1$ this amounts to
\[
\B  \vDash \varphi_1(\langle 1, 1, 0 \rangle, \langle 0, 1, 0 \rangle, \langle 0, 1, 1 \rangle, \langle 1, 1, 1 \rangle).
\]
As the pp formula $\varphi_1$ defines the implicit operation $i_{\K, 1}$  and $\B \leq \A$, the  statement follows because the validity of pp formulas is preserved by extensions.
\end{proof}

Moreover, recall that $g \in \imppp(\K)$ is ternary and observe that $g^\A$ is total by (\ref{Eq : properties of D}). Therefore, by \cref{Cor : dominion base} there exist $k \in \mathbb{N}$ and ternary monoid terms $t_1, \dots, t_{2k+1}$ such that
\[
        g^\A(a_1, a_2, a_3) = i_{\K, k}^\A(t_1^\A(a_1, a_2, a_3), \dots, t_{2k+1}^\A(a_1, a_2, a_3))\text{ for all } a_1, a_2, a_3 \in A.
\]
Consequently, there exists also
\begin{equation} \label{Eq : min assumption}
 \begin{split}
     n = \min \{ & m \in \mathbb{N} : \text{there exist ternary monoid terms }t_1, \dots, t_{2m+1} \text{ such that}\\
     &g^\A(a_1, a_2, a_3) = i_{\K, m}^\A(t_1^\A(a_1, a_2, a_3), \dots, t_{2m+1}^\A(a_1, a_2, a_3)) \\
     &\text{for all }a_1, a_2, a_3 \in A  - \{0\}\}.
 \end{split}   
\end{equation}

\begin{Claim} \label{Claim : terms in f_m are variables}
We may assume that $t_m \in \{ x_1, x_2, x_3, 1 \}$ for every $m \leq 2n+1$.
    \end{Claim}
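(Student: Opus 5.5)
The plan is to evaluate the terms at the specific triple from \cref{Claim : value of 1st Isbell} and use \cref{Cor: : arg in Isb = 0 - > value 0} to show that no term with total degree at least $2$ can occur. Fix $n$ and ternary monoid terms $t_1, \dots, t_{2n+1}$ witnessing the minimum in (\ref{Eq : min assumption}). Every ternary monoid term is equivalent in $\mathsf{CM}$, and hence in $\K$, to a term of the form $x_1^{p}x_2^{q}x_3^{r}$ with $p, q, r \in \mathbb{N}$. Replacing each $t_m$ by such a normal form changes neither the functions $t_m^\A$ nor the validity of the identity in (\ref{Eq : min assumption}). So we may assume each $t_m = x_1^{p_m}x_2^{q_m}x_3^{r_m}$. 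The task is then to show that $p_m + q_m + r_m \leq 1$ for every $m$, which means $t_m \in \{x_1, x_2, x_3, 1\}$.

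Consider the triple
\[
\bar{a} = \langle \langle 1,1,0\rangle, \langle 0,1,0\rangle, \langle 0,1,1\rangle \rangle.
\]
Its entries are nonzero elements of $B \subseteq A$, so $\bar a \in (A-\{0\})^3$ and the identity in (\ref{Eq : min assumption}) applies to it. By \cref{Claim : value of 1st Isbell} and (\ref{Eq : g interpol f_1}), applied to $\C = \A$ (which is legitimate by (\ref{Eq : properties of D})), we get $g^\A(\bar a) = i_{\K,1}^\A(\bar a) = \langle 1,1,1\rangle$. Writing $b_m = t_m^\A(\bar a)$, the choice of the $t_m$ gives $\A \vDash \varphi_n(b_1, \dots, b_{2n+1}, \langle 1,1,1\rangle)$.

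The key observation is that all three entries of $\bar a$ have middle coordinate $1$. Hence in $\B$ the product of any two of them, including the square of one of them, has middle coordinate $2$ and is therefore $0$. So if $p_m+q_m+r_m \geq 2$, then $b_m$ equals a product of two such entries times further factors, and this is $0$ because $0$ is a zero element of $\A$ by \cref{Claim : zero element for A}. In that case \cref{Cor: : arg in Isb = 0 - > value 0} gives $\langle 1,1,1\rangle = 0$, which is absurd. Therefore $p_m+q_m+r_m \leq 1$ for all $m \leq 2n+1$, and the claim follows. This also covers $n = 0$, where $\varphi_0$ is just an equation and the corollary's $n=0$ case applies.

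The main point requiring care is to check that this normalization preserves the minimality built into (\ref{Eq : min assumption}). It does, because the number $n$ is unchanged and the terms are only replaced by $\K$-equivalent ones, so the same $n$ remains the minimum. The other thing to confirm is that $0$ is a genuine zero element in $\A$ and not merely in $\B$, which is exactly \cref{Claim : zero element for A}.
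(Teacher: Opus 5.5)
Your proposal is correct and follows essentially the same route as the paper. You normalize each $t_m$ to $x_1^{k_1}x_2^{k_2}x_3^{k_3}$ and evaluate at $\langle 1,1,0\rangle, \langle 0,1,0\rangle, \langle 0,1,1\rangle$, where any product of two or more entries is $0$. You then combine \eqref{Eq : g interpol f_1}, \cref{Claim : value of 1st Isbell} and \eqref{Eq : min assumption} to get $\A \vDash \varphi_n(\dots, \langle 1,1,1\rangle)$, and conclude via \cref{Cor: : arg in Isb = 0 - > value 0} and \cref{Claim : zero element for A}. You also state explicitly that this triple lies in $(A-\{0\})^3$, which applying \eqref{Eq : min assumption} requires but the paper leaves implicit.
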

    \begin{proof} [Proof of the Claim]
Consider $m \leq 2n+1$. As $\A$ is a commutative monoid, we may assume that $t_m$ is of the form $x_1^{k_{1}} x_2^{k_{2}}x_3^{k_{3}}$ for some $k_{1}, k_{2}, k_{3} \in \mathbb{N}$. If $k_1 + k_2 + k_3 \leq 1$, then we may assume that $t_m \in \{ x_1, x_2, x_3, 1 \}$, as desired. Then suppose that $k_1 + k_2 + k_3 > 1$, with a view to contradiction. Recall that $\{ 0, 1 \}^3 \subseteq B \subseteq A$. 
By applying in succession the fact that $\B \leq \A$ (which holds by (\ref{Eq : properties of D})), the assumption that $t_m = x_1^{k_1}x_2^{k_2}x_3^{k_3}$, and the definition of $\B$ in combination with $k_1 + k_2 + k_3 > 1$, we obtain
\begin{align*}
    t_m^\A(\langle 1, 1, 0 \rangle, \langle 0, 1, 0 \rangle, \langle 0, 1, 1 \rangle) &= t_m^\B(\langle 1, 1, 0 \rangle, \langle 0, 1, 0 \rangle, \langle 0, 1, 1 \rangle) \\
    &=  \langle 1, 1, 0 \rangle^{k_1}\langle 0, 1, 0 \rangle^{k_2} \langle 0, 1, 1 \rangle^{k_3}\\
    &= 0.
\end{align*}
Together with \cref{Claim : zero element for A}, the above display yields that 
\begin{equation}\label{Eq : t is a zero element}
    0 = t_m^\A(\langle 1, 1, 0 \rangle, \langle 0, 1, 0 \rangle, \langle 0, 1, 1 \rangle) \text{ is a zero element for }\A.
\end{equation}

Now, observe that $\A \in \K$ and that the algebra $\A[\L_\F]$ is defined by  \eqref{Eq : properties of D}. Therefore, $\A[\L_\F] \in \K[\L_\F]$.  Together with \eqref{Eq : g interpol f_1} and Claim \ref{Claim : value of 1st Isbell}, this yields
\[
\langle \langle 1, 1, 0 \rangle, \langle 0, 1, 0 \rangle, \langle 0, 1, 1 \rangle \rangle \in \mathsf{dom}(g^\A) \, \, \text{ and }\, \, g^\A(\langle 1, 1, 0 \rangle, \langle 0, 1, 0 \rangle, \langle 0, 1, 1 \rangle) = \langle 1, 1, 1\rangle.
\]
By \eqref{Eq : min assumption} this implies
\[
i_{\K, n}^\A(t_1^\A(\langle 1, 1, 0 \rangle, \langle 0, 1, 0 \rangle, \langle 0, 1, 1 \rangle), \dots, t_{2n+1}^\A(\langle 1, 1, 0 \rangle, \langle 0, 1, 0 \rangle, \langle 0, 1, 1 \rangle)) = \langle 1, 1, 1\rangle.
\]
Since $i_{\K, n}$ is defined by $\varphi_n$, we obtain
\[
\A \vDash \varphi_n(t_1^\A(\langle 1, 1, 0 \rangle, \langle 0, 1, 0 \rangle, \langle 0, 1, 1 \rangle), \dots, t_{2n+1}^\A(\langle 1, 1, 0 \rangle, \langle 0, 1, 0 \rangle, \langle 0, 1, 1 \rangle), \langle 1, 1, 1\rangle).
\]
Lastly, applying \cref{Cor: : arg in Isb = 0 - > value 0} and \eqref{Eq : t is a zero element} to the above display, we conclude that $0 = \langle 1, 1, 1 \rangle$, which is false.
    \end{proof}

\begin{Claim} \label{Claim : excluded cases}
We have $n \ne 0$,   $t_2 \neq 1$, and $t_1 \neq t_2$.
\end{Claim}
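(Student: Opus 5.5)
The plan is to argue each of the three assertions by contradiction with the minimality of $n$ in \eqref{Eq : min assumption}, or directly with \cref{Claim : value of 1st Isbell}. Throughout, I use \cref{Claim : terms in f_m are variables} ($t_m \in \{x_1,x_2,x_3,1\}$). I also use that $g^\A$ is total and coincides with $i_{\K,n}^\A(t_1^\A,\dots,t_{2n+1}^\A)$ on $(A-\{0\})^3$. Hence for every $a_1,a_2,a_3 \in A-\{0\}$ the tuple of values $t_m^\A(a_1,a_2,a_3)$ lies in $\mathsf{dom}(i^\A_{\K,n})$. Therefore there are witnesses $c_i,d_i$ with $\A \vDash \psi_n(\dots, g^\A(a_1,a_2,a_3))$. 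A basic observation is that each $t_m^\A(a_1,a_2,a_3)$ is one of $a_1,a_2,a_3,1$, so it is nonzero whenever the arguments are.

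For $n \neq 0$: if $n=0$, then $g^\A = t_1^\A$ on nonzero arguments. Evaluating at $\langle 1,1,0\rangle,\langle 0,1,0\rangle,\langle 0,1,1\rangle$ (all nonzero), the left side is $\langle 1,1,1\rangle$ by \eqref{Eq : g interpol f_1} and \cref{Claim : value of 1st Isbell}. The right side is one of $\langle 1,1,0\rangle,\langle 0,1,0\rangle,\langle 0,1,1\rangle,\langle 0,0,0\rangle$, which is a contradiction.

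For $t_2 \neq 1$: suppose $t_2 = 1$ and fix nonzero $a_1,a_2,a_3$ with witnesses as above. Then \cref{Prop : Isbell arguments technical}\eqref{Prop : Isbell arguments technical:1} gives $\A \vDash \varphi_{n-1}(t_1^\A t_3^\A, t_4^\A,\dots,t_{2n+1}^\A, g^\A(a_1,a_2,a_3))$. Since $\varphi_{n-1}$ defines $i_{\K,n-1}$, this says $g^\A = i_{\K,n-1}^\A(t_1t_3,t_4,\dots,t_{2n+1})$ on $(A-\{0\})^3$. As $t_1t_3$ is a ternary monoid term, this contradicts the minimality of $n$.

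For $t_1 \neq t_2$: suppose $t_1 = t_2$. I will apply \cref{Prop : Isbell arguments technical}\eqref{Prop : Isbell arguments technical:2} in the same way to obtain the representation $i_{\K,n-1}^\A(t_3,\dots,t_{2n+1})$, contradicting minimality again. For this, the witness $d_1$ must be invertible. From $a_1 = d_1 a_2$ and $a_1 = a_2$ we get $d_1 a_1 = a_1$, and hence $d_1^k a_1 = a_1$ for all $k$. When $\K$ is proper, \cref{Thm : char of SIs: inv or nil} applied to $\A \in \K_\textsc{si}$ says $d_1$ is invertible or nilpotent. In the nilpotent case $a_1 = d_1^k a_1$ would be the zero element $0$ (\cref{Claim : zero element for A}). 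But $a_1 = t_1^\A(\dots)$ is nonzero, so $d_1$ is invertible, as needed.

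The main obstacle is the case $\K = \mathsf{CM}$, where \cref{Thm : char of SIs: inv or nil} is unavailable. Here I would first try to show that in every $\A \in \mathsf{CM}_\textsc{si}$ with zero $0$, the condition $da = a \neq 0$ forces $d$ to be invertible. For finitely generated $\A$ this follows from \cref{Thm : Grillet finite SI} by the same argument as above. The difficulty is transferring it to arbitrary subdirectly irreducibles via \cref{Prop : universal sentences}, since invertibility is not universal. The first attempt would be to avoid needing invertibility at all. I would multiply the $\psi_{n-1}$ instance from \cref{prop:zigzag facts}\eqref{prop:zigzag facts:1} by suitable elements using \cref{prop:zigzag facts}\eqref{prop:zigzag facts:2}, aiming to replace $d_1a_3$ by $a_3$ using only $d_1 a_1 = a_1$ and $b = a_1c_1$.
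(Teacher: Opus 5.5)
Your arguments for $n \neq 0$, for $t_2 \neq 1$, and for $t_1 \neq t_2$ when $\K$ is a proper subvariety of $\mathsf{CM}$ coincide with the paper's. In particular, \cref{Subclaim : only one} gets invertibility of $d_1$ exactly as you do, from \cref{Thm : char of SIs: inv or nil} and $t_1^\A(a_1,a_2,a_3) \neq 0$. The gap is the case $\K = \mathsf{CM}$: you isolate it correctly but leave it open, so as it stands the proposal does not prove the claim.

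The paper's own proof passes over the same point. \cref{Subclaim : only one} applies \cref{Thm : char of SIs: inv or nil} to $\A \in \K_\textsc{si}$ without assuming $\K$ proper, although that theorem needs $m > 0$. Moreover, the nilpotent-or-invertible dichotomy genuinely fails in $\mathsf{CM}_\textsc{si}$. Take the commutative monoid with pairwise distinct elements $a^k, c_l$ ($k, l \in \mathbb{N}$) and an absorbing $0$, where $a^0 = 1$, $a^ka^l = a^{k+l}$, $a^kc_l = c_{l-k}$ if $k \leq l$, $a^kc_l = 0$ if $k > l$, and $c_kc_l = 0$. Multiplying any two distinct elements by a suitable $c_k$ or $a^l$ yields the pair $\langle c_0, 0\rangle$, so it is subdirectly irreducible, yet $a$ is neither nilpotent nor invertible. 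The cheapest repair is to dispose of $\mathsf{CM}$ at the start of (i)$\Rightarrow$(iii), using the known fact that $\mathsf{CM}$ has no Beth companion \cite[Thm.~14.1]{CKMIMP}.

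If you want a self-contained argument, neither of your suggested routes works. Transfer through \cref{Prop : universal sentences} fails for the reason you give. Nor can $d_1$ be eliminated by manipulating the $\psi_n$-equations alone. Consider the commutative monoid presented by generators $a_1, a_3, a_4, c_1, c_2, d_1, d_2$ and relations $a_1 = d_1a_1$, $a_1c_1 = a_3c_2$, $d_1a_3 = d_2a_4$. All equations of $\psi_2$ hold with $a_2 = a_1$, $a_5 = a_4c_2$, and $b = a_1c_1$. But no relation applies to the monomial $a_3$, so $a_3 \notin Aa_4$ and $\varphi_1(a_3, a_4, a_5, b)$ fails.

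What works, uniformly in $\K$, is a monolith argument: if $\A \in \mathsf{CM}_\textsc{si}$ has a zero element $0$ and $dx = x \neq 0$, then $d = 1$. To see this, choose $p \neq q$ with $\langle p, q \rangle$ in the monolith $\mu$ of $\A$. The Rees congruence of the ideal $Ax$ is nontrivial because it identifies $x$ with $0$, so it contains $\mu$. Hence $p, q \in Ax$, and $dx = x$ gives $dp = p$ and $dq = q$. Now suppose $d \neq 1$. The congruence $\{\langle u, v\rangle : d^iu = d^jv \text{ for some } i, j \in \mathbb{N}\}$ contains $\langle d, 1\rangle$, so it is nontrivial and therefore contains $\langle p, q\rangle$. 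This gives $p = d^ip = d^jq = q$, a contradiction.

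Apply this in $\A$, which has zero element $0$ by \cref{Claim : zero element for A}, with $d = d_1$ and $x = t_1^\A(a_1,a_2,a_3) \in \{a_1, a_2, a_3, 1\}$, which is nonzero. This gives $d_1 = 1$, and \cref{Prop : Isbell arguments technical}\eqref{Prop : Isbell arguments technical:2} then finishes exactly as you planned.
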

\begin{proof}[Proof of the Claim]
    We begin by showing that $n \neq 0$. Suppose the contrary, with a view to contradiction. On the one hand, Claim \ref{Claim : terms in f_m are variables} implies 
    \begin{equation}\label{Eq : new equation : last steps : claim 6.8} 
    t_1^\A(\langle 1,1,0 \rangle, \langle 0,1,0 \rangle, \langle 0,1,1 \rangle) \in \{\langle 0,0,0 \rangle,\langle 1,1,0 \rangle, \langle 0,1,0 \rangle, \langle 0,1,1 \rangle\}.
        \end{equation}
On the other hand, applying in succession \eqref{Eq : g interpol f_1} and \cref{Claim : value of 1st Isbell}, we obtain

    \begin{align*}
g^\A(\langle 1,1,0 \rangle, \langle 0,1,0 \rangle, \langle 0,1,1 \rangle) &= i_{\K, 1}^\A(\langle 1,1,0 \rangle, \langle 0,1,0 \rangle, \langle 0,1,1 \rangle)\\
&= \langle 1, 1, 1\rangle.
    \end{align*}
Together with \eqref{Eq : min assumption}, the assumption that $n = 0$, and the definition of $i_{\K, 0}$, the above display yields 
    \begin{align*}
        t_1^\A(\langle 1,1,0 \rangle, \langle 0,1,0 \rangle, \langle 0,1,1 \rangle) &= i_{\K,0}^{\A}(t_1^\A(\langle 1,1,0 \rangle, \langle 0,1,0 \rangle, \langle 0,1,1 \rangle)) \\
        &= g^\A(\langle 1,1,0 \rangle, \langle 0,1,0 \rangle, \langle 0,1,1 \rangle)\\
        & =\langle 1, 1, 1\rangle,
            \end{align*}
a contradiction with \eqref{Eq : new equation : last steps : claim 6.8}. Hence, we conclude that $n \geq 1$, as desired.

Next, we prove that $t_2 \neq 1$ and $t_1 \neq t_2$. Suppose, with a view to contradiction, that either $t_2 = 1$ or $t_1 = t_2$. We will reach a contradiction with \eqref{Eq : min assumption} and the fact that the Isbell's formula $\varphi_k$ defines $i_{\K, k}$ for each $k \in \mathbb{N}$ by  showing that there exist ternary monoid terms $s_1, \dots, s_{2n-1}$ such that 
        \begin{equation} \label{Eq : zigzag with s terms}
            \A \vDash \varphi_{n-1}(s_1^{\A}(a_1, a_2, a_3), \dots, s_{2n-1}^{\A}(a_1, a_2, a_3), g^{\A}(a_1, a_2, a_3))
        \end{equation}
        for all $a_1, a_2, a_3 \in A - \{0\}$.

We have two cases depending on whether $t_2 = 1$ or $t_1 = t_2$. We begin with the case where $t_2 = 1$. By \eqref{Eq : min assumption} and the fact that $\varphi_n$ defines $i_{\K, n}$ we obtain that for all $a_1, a_2, a_3 \in A - \{ 0 \}$,
\[
\A \vDash \varphi_n(t_1^{\A}(a_1, a_2, a_3), 1, t_3^{\A}(a_1, a_2, a_3), \dots, t_{2n+1}^{\A}(a_1, a_2, a_3), g^\A(a_1, a_2, a_3)).
\]
 Therefore, we can apply \cref{Prop : Isbell arguments technical} (in particular, its case \eqref{item : a2 neq 1}), obtaining that \eqref{Eq : zigzag with s terms} holds for all $a_1, a_2, a_3 \in A - \{ 0 \}$ letting $s_1 = t_1t_3$ and $s_i = t_{i+2}$ for $1 < i \leq 2n-1$.       

        Next, we consider the case where $t_1 = t_2$. We begin with the following observation.
 
        \begin{Subclaim}\label{Subclaim : only one}
Let $a_1, a_2, a_3 \in A - \{ 0 \}$. Then there exist $c_1, \dots, c_n, d_1, \dots, d_{n} \in A$ with $d_1$ invertible such that $t_1^\A(a_1, a_2, a_3) = t_2^\A(a_1, a_2, a_3)$ and 
\[
\A \vDash \psi_n(c_1, \dots, c_n, d_1, \dots, d_n, t_1^\A(a_1, a_2, a_3), \dots, t_{2n+1}^\A(a_1, a_2, a_3), g^\A(a_1, a_2, a_3)).
\]
        \end{Subclaim}

\begin{proof}[Proof of the Subclaim]
   From \eqref{Eq : min assumption}, the fact that $\varphi_n$ defines $i_{\K, n}$, and the definition of $\varphi_n$ it follows that there  exist $c_1, \dots, c_n, d_1, \dots, d_{n} \in A$ such that
   \[
   \A \vDash \psi_n(c_1, \dots, c_n, d_1, \dots, d_n, t_1^\A(a_1, a_2, a_3), \dots, t_{2n+1}^\A(a_1, a_2, a_3), g^\A(a_1, a_2, a_3)).
   \]
   Furthermore, the assumption that $t_1 = t_2$ ensures that  $t_1^\A(a_1, a_2, a_3) = t_2^\A(a_1, a_2, a_3)$. Therefore, it only remains to show that $d_1$ is invertible. To this end, observe that equation \eqref{Isbell : 2 : formula} in $\psi_n$ and  $t_1^{\A}(a_1, a_2, a_3) = t_2^{\A}(a_1, a_2, a_3)$ imply  $t_1^{\A}(a_1, a_2, a_3) = d_1t_2^{\A}(a_1, a_2, a_3) = d_1t_1^{\A}(a_1, a_2, a_3)$. 
        Consequently,
\[
            t_1^{\A}(a_1, a_2, a_3) = d_1^{\?m}t_1^{\A}(a_1, a_2, a_3) \text{ for every } m \in \mathbb{N}.
\]
Recall from \eqref{Eq : properties of D} that $\A$ is subdirectly irreducible. Therefore, Theorem \ref{Thm : char of SIs: inv or nil} yields that $d_1$ is either nilpotent or invertible. 
        Suppose, with a view to contradiction, that
        $d_1$ is $m$-nilpotent for some $m \in \mathbb{N}$. Together with the above display, this yields  $t_1^{\A}(a_1, a_2, a_3) = d_1^mt_1^{\A}(a_1, a_2, a_3) = d_1^m$. As $d_1$ is $m$-nilpotent, we conclude that $t_1^{\A}(a_1, a_2, a_3)$ is a zero element for $\A$. Together with 
        \cref{Prop : basic properties}\eqref{item : basic properties : 1} and Claim \ref{Claim : zero element for A}, this implies $t_1^{\A}(a_1, a_2, a_3) = 0$. 
        Moreover, recall that $t_1 \in \{x_1, x_2, x_3, 1\}$ by Claim \ref{Claim : terms in f_m are variables}. 
        Hence, $t_1^\A(a_1, a_2, a_3) \in \{ a_1, a_2, a_3, 1 \}$. Since $a_1, a_2, a_3 \in A - \{ 0 \}$ by assumption and $t_1^\A(a_1, a_2, a_3) = 0$, we deduce $0 = t_1^\A(a_1, a_2, a_3) = 1$. Therefore, $\A$ is trivial, a contradiction with the assumption that it is subdirectly irreducible.
        Thus, we conclude that $d_1$ is invertible.
\end{proof}

In view of \cref{Subclaim : only one}, 
we can apply \cref{Prop : Isbell arguments technical} (in particular, its case \eqref{item : a1 neq a2}), obtaining that  \eqref{Eq : zigzag with s terms} holds for all $a_1, a_2, a_3 \in A - \{ 0 \}$ letting $s_1 = t_3$ and $s_i = t_{i+2}$ for $1 < i \leq 2n-1$.  
\end{proof}

    In view of Claim \ref{Claim : excluded cases},  we have $t_1 \neq t_2$ and $t_2 \neq 1$. 
    Therefore, Claim \ref{Claim : terms in f_m are variables} implies $t_2 \in \{x_1, x_2, x_3\}$. Moreover, by symmetry we may assume $t_2 = x_1$.\footnote{The cases where $t_2$ is $x_2$ or $x_3$ are handled analogously by changing the order of the arguments $\langle 1, 0, 0 \rangle$, $\langle 0, 0, 0 \rangle$, and $\langle 0, 0, 0 \rangle$ in the next proof in the natural way.}
    By (\ref{Eq : min assumption}) we know that 
\begin{align*}
i_{\K,n}^{\A}(t_1^\A(&\langle 1,0,0 \rangle,\langle 0, 0, 0 \rangle,\langle 0, 0, 0 \rangle), \dots, t_{2n+1}^\A(\langle 1,0,0 \rangle,\langle 0, 0, 0 \rangle,\langle 0, 0, 0 \rangle))\\
&= g^\A(\langle 1,0,0 \rangle,\langle 0, 0, 0 \rangle,\langle 0, 0, 0 \rangle).
\end{align*}
Recall that $i_{\K, n}$ is defined by $\varphi_n$. Together with the above display and the definition of $\varphi_n$, this yields the existence of some $c_1, \dots, c_n, d_1, \dots, d_n \in A$ such that
\begin{align*}
    \A \vDash \psi_n(&c_1, \dots, c_n, d_1, \dots, d_n, t_1^\A(\langle 1,0,0 \rangle,\langle 0, 0, 0 \rangle,\langle 0, 0, 0 \rangle), \dots\\
    & \dots, t_{2n+1}^\A(\langle 1,0,0 \rangle,\langle 0, 0, 0 \rangle,\langle 0, 0, 0 \rangle), g^\A(\langle 1,0,0 \rangle,\langle 0, 0, 0 \rangle,\langle 0, 0, 0 \rangle)).
\end{align*}
Consequently, the equation \eqref{Isbell : 2 : formula} in $\psi_n$ ensures that 
\[
t_1^\A(\langle 1,0,0 \rangle,\langle 0, 0, 0 \rangle,\langle 0, 0, 0 \rangle) = d_1 t_2^\A(\langle 1,0,0 \rangle,\langle 0, 0, 0 \rangle,\langle 0, 0, 0 \rangle).
\]
    Moreover, \cref{Claim : terms in f_m are variables} guarantees that $t_1 \in \{x_2, x_3, 1\}$ because  $t_1 \neq t_2 = x_1$ by assumption. Applying in succession the fact that $\langle 0, 0, 0 \rangle$ is the neutral element of $\A$, $t_1 \in \{ x_2, x_3, 1 \}$, the above display, and $t_2 = x_1$, we obtain
\[
    1^{\A} = \langle 0,0,0 \rangle = t_1^\A(\langle 1,0,0 \rangle,\langle 0, 0, 0 \rangle,\langle 0, 0, 0 \rangle) = d_1 t_2^\A(\langle 1,0,0 \rangle,\langle 0, 0, 0 \rangle,\langle 0, 0, 0 \rangle) = d_1\langle 1,0,0 \rangle.
\]
Lastly, we have
    \[
    1^\A = (1^\A)^2 = d^2\langle 1,0,0 \rangle^2 = d^2 \cdot 0 = 0,
    \]
where the first equality  holds because $1^\A$ is the neutral element of $\A$, 
the second holds by the previous display, the third by the definition of $\B$, and the fourth by \cref{Claim : zero element for A}. Observe that the above display implies that $\A$ is trivial, a contradiction with the fact that $\langle 0, 0, 0 \rangle \ne 0$ are both elements of $\A$. 
\end{proof}

\begin{Remark}
Let $\K$ be a variety. We recall that a homomorphism $f \colon \A \to \B$ with $\A, \B \in \K$ is a $\K$-\emph{epimorphism} when for every pair of homomorphisms $g, h \colon \B \to \C$ with $\C \in \K$,
\[
g\circ f = h \circ f \text{ implies }g = h
\]
(see, e.g.,  \cite[Def.~7.39]{AHS06}).\ We say that $\K$ has
\benroman
\item the \emph{epimorphism surjectivity property} when all $\K$-epimorphisms are surjective;
\item the \emph{weak epimorphism surjectivity property} when all $\K$-epimorphisms between finitely generated algebras are surjective (see, e.g., \cite[p.\ 259]{HMTII} and \cite{CKMWES}).
\eroman
Equivalently, $\K$ has the (resp.\ weak) epimorphism surjectivity property when for all (resp.\ finitely generated) $\A \leq \B \in \K$ we have either $A = B$ or $\mathsf{d}_\K(\A, \B) \ne B$. As their names suggest, the strong epimorphism surjectivity property is stronger than the epimorphism surjectivity property which, in turn, is stronger than the weak epimorphism surjectivity property.
However, the three properties are equivalent when $\K$ has the amalgamation property (see, e.g.,  \cite[Thm.~1.3]{BMRES} and \cite[Thm.~7.14]{CKMIMP}).

In view of \cref{Thm : main CMon Vars}, a variety of commutative monoids has the strong epimorphism surjectivity property precisely when it is inverse. It is therefore natural to wonder whether there exist noninverse varieties of commutative monoids with the (resp.\ weak) epimorphism surjectivity property. In this remark, we provide an exhaustive answer to this question.

On the one hand, a variety of commutative monoids has the epimorphism surjectivity property precisely when it is inverse, as we proceed to illustrate. Let $\K$ be a noninverse variety of commutative monoids. By \cref{Thm : char of inverse monoids} we have $\mathsf{C}_2 \subseteq  \K$. In \cite[Exa.\ 2]{HigcSG}, Higgins exhibited a pair of a commutative semigroups $\A$ and $\B$ satisfying $x^2 \approx x^3$ such that $\mathsf{d}_{\mathsf{CS}}(\A, \B) = B \ne A$.
Adding  a new element $1$ that acts as a  
neutral element for both $\A$ and $\B$, we can turn them into a pair of monoids $\A^1 \leq  \B^1 \in \mathsf{C}_2 \subseteq \K$ with $A \ne B$. 
 As $\A \leq \A^1_{\mathsf{s}}$ and $\B \leq \B^1_{\mathsf{s}}$, we have 
\begin{equation} \label{Eq : dom inclusion}
B = \mathsf{d}_{\mathsf{CS}}(\A, \B) \subseteq \mathsf{d}_{\mathsf{CS}}(\A^1_{\mathsf{s}}, \B^1_{\mathsf{s}}) = \mathsf{d}_{\mathsf{CM}}(\A^1, \B^1),
\end{equation}
where the first step holds by assumption, the second by \cite[Cor.\ 4.6(i)]{CKMIMP}, and the third by \cref{Prop : dom computed in CSG vs CMon}.\ 
Moreover, observe that $1 \in A^1 \subseteq \mathsf{d}_{\mathsf{CM}}(\A^1, \B^1)$. Together with \eqref{Eq : dom inclusion}, this yields $\mathsf{d}_{\mathsf{CM}}(\A^1, \B^1) = B^1 \neq A^1$. 
Hence, $\K$ lacks the epimorphism surjectivity property, as desired (the inverse varieties of commutative monoids have this property by \cref{Thm : main CMon Vars}).

On the other hand, all proper subvarieties $\K$ of $\mathsf{CM}$ have the weak epimorphism surjectivity property.\ For let $\A \leq \B \in \K$  be finitely generated. 
 As every proper subvariety of $\CMon$ is locally finite (see, e.g., \cite[p.\ 23]{MR2581451}),
the commutative  monoid $\A$ is finite.
Consequently, either $A = B$ or $\mathsf{d}_\K(\A, \B) \ne B$ by a result of Howie and Isbell (see \cite[Thm.\ 3.2]{HoIsEpiII}).\footnote{The statement of \cite[Thm.\ 3.2]{HoIsEpiII} is about commutative semigroups. However, in light of Corollary \ref{Cor : compute doms of subvarieties in CMon} and Proposition \ref{Prop : dom computed in CSG vs CMon}  and the observation that monoids have the same ideals as their semigroup reducts, the corresponding statement for varieties of commutative monoids holds as well.} Hence, $\K$ has the weak epimorphism surjectivity property, as desired. Notice that this property fails for $\mathsf{CM}$ itself, as the commutative monoids $\mathbb{N} = \langle \mathbb{N}, +, 0 \rangle$ and $\mathbb{Z} = \langle \mathbb{Z}, +, 0 \rangle$ are finitely generated and $\mathsf{d}_{\mathsf{CM}}(\mathbb{N}, \mathbb{Z}) = \mathbb{Z}$ (the latter because inverses are preserved by monoid homomorphisms). 

These observations are summarized in the table below, where ``ES'' stands for ``epimorphism surjectivity property''. 
\qed
\end{Remark}

\begin{tabular}{|c | c | c | c |}
\hline
& inverse subvarieties & proper noninverse subvarieties &  \, \, $\mathsf{CM}$ \, \, \\
\hline
weak ES & $\checkmark$ & $\checkmark$  & $\times$\\
\hline
ES & $\checkmark$ & $\times$  & $\times$\\
\hline
strong ES & $\checkmark$ & $\times$  & $\times$\\
\hline
Beth companion & $\checkmark$ & $\times$  & $\times$\\
\hline
\end{tabular}

\

\subsection*{Acknowledgments} The second author was supported by the ayuda PREP$2022$-$000927$ financiada por MICIU/AEI/$10$.$13039$/$501100011033$ y por FSE+ as part of the proyecto PID$2022$-$141529$NB-C$21$ de investigación financiado por MICIU/AEI/$10$.$13039$/$501100011033$ y por FEDER,
UE.
The third author was supported by the proyecto PID$2022$-$141529$NB-C$21$ de investigación
financiado por MICIU/AEI/$10$.$13039$/$501100011033$ y por FEDER, UE. He was also supported
by the MSCA-RISE-Marie Skłodowska-Curie Research and Innovation Staff Exchange (RISE)
project MOSAIC $101007627$ funded by Horizon $2020$ of the European Union.


\begin{thebibliography}{KMPT82}

\bibitem[AHS06]{AHS06}
J.~Ad{\'a}mek, H.~Herrlich, and G.~E. Strecker.
\newblock Abstract and concrete categories: the joy of cats.
\newblock {\em Repr. Theory Appl. Categ.}, (17):1--507, 2006.

\bibitem[Ber11]{Ber11}
C.~Bergman.
\newblock {\em Universal Algebra: Fundamentals and Selected Topics}.
\newblock Chapman \& Hall Pure and Applied Mathematics. Chapman and Hall/CRC, 2011.

\bibitem[Ber15]{Ber15}
G.~M. Bergman.
\newblock {\em An invitation to general algebra and universal constructions}.
\newblock Universitext. Springer, Cham, second edition, 2015.

\bibitem[BMR17]{BMRES}
G.~Bezhanishvili, T.~Moraschini, and J.~G. Raftery.
\newblock Epimorphisms in varieties of residuated structures.
\newblock {\em J. Algebra}, 492:185--211, 2017.

\bibitem[BS12]{BuSa00}
S.~Burris and H.~P. Sankappanavar.
\newblock {\em A Course in Universal Algebra}.
\newblock 2012.
\newblock The millennium edition, available online.

\bibitem[CD90]{CD90}
L.~Czelakowski and W.~Dziobiak.
\newblock Congruence distributive quasivarieties whose finitely subdirectly irreducible members form a universal class.
\newblock {\em Algebra Universalis}, 27(1):128--149, 1990.

\bibitem[CK90]{ModCK}
C.~C. Chang and H.~J. Keisler.
\newblock {\em Model theory}, volume~73 of {\em Studies in Logic and the Foundations of Mathematics}.
\newblock North-Holland Publishing Co., Amsterdam, third edition, 1990.

\bibitem[CKM25a]{CKMWES}
L.~Carai, M.~Kurtzhals, and T.~Moraschini.
\newblock Epimorphisms between finitely generated algebras.
\newblock {\em Indag. Math. (N.S.)}, 36(5):1336--1354, 2025.

\bibitem[CKM25b]{CKMIMP}
L.~Carai, M.~Kurtzhals, and T.~Moraschini.
\newblock The theory of implicit operations.
\newblock Available at \url{https://arxiv.org/abs/2512.14326v1}, 2025.

\bibitem[CKM26]{CKMRings}
L.~Carai, M.~Kurtzhals, and T.~Moraschini.
\newblock Implicit operations in reduced commutative rings.
\newblock In preparation, 2026.

\bibitem[GLV22]{GLVVCMon}
S.~V. Gusev, Edmond W.~H. Lee, and Boris~M. Vernikov.
\newblock The lattice of varieties of monoids.
\newblock {\em Jpn. J. Math.}, 17(2):117--183, 2022.

\bibitem[Gri77]{Gri77}
P.~A. Grillet.
\newblock On subdirectly irreducible commutative semigroups.
\newblock {\em Pacific J. Math.}, 69(1):55--71, 1977.

\bibitem[Gri01]{Gri01}
P.~A. Grillet.
\newblock {\em Commutative semigroups}, volume~2 of {\em Advances in Mathematics (Dordrecht)}.
\newblock Kluwer Academic Publishers, 2001.

\bibitem[HI67]{HoIsEpiII}
J.~M. Howie and J.~R. Isbell.
\newblock Epimorphisms and dominions. {II}.
\newblock {\em J. Algebra}, 6:7--21, 1967.

\bibitem[Hig83]{HigcSG}
P.~M. Higgins.
\newblock The varieties of commutative semigroups for which epis are onto.
\newblock {\em Proc. Roy. Soc. Edinburgh Sect. A}, 94(1-2):1--7, 1983.

\bibitem[HMT85]{HMTII}
L.~Henkin, J.~D. Monk, and A.~Tarski.
\newblock {\em Cylindric algebras. {P}art {II}}, volume 115 of {\em Studies in Logic and the Foundations of Mathematics}.
\newblock North-Holland Publishing Co., Amsterdam, 1985.

\bibitem[How96]{HowZigzag}
J.~M. Howie.
\newblock Isbell's zigzag theorem and its consequences.
\newblock In {\em Semigroup theory and its applications ({N}ew {O}rleans, {LA}, 1994)}, volume 231 of {\em London Math. Soc. Lecture Note Ser.}, pages 81--91. Cambridge Univ. Press, Cambridge, 1996.

\bibitem[Isb66]{Isb65}
J.~R. Isbell.
\newblock Epimorphisms and dominions.
\newblock In {\em Proc. {C}onf. {C}ategorical {A}lgebra ({L}a {J}olla, {C}alif., 1965)}, pages 232--246. Springer-Verlag New York, Inc., New York, 1966.

\bibitem[KMPT82]{SurvKissal}
E.~W. Kiss, L.~M\'{a}rki, P.~Pr\"{o}hle, and W.~Tholen.
\newblock Categorical algebraic properties. {A} compendium on amalgamation, congruence extension, epimorphisms, residual smallness, and injectivity.
\newblock {\em Studia Sci. Math. Hungar.}, 18(1):79--140, 1982.

\bibitem[Mak99]{MakpBeth}
L.~L. Maksimova.
\newblock Projective {B}eth properties in modal and superintuitionistic logics.
\newblock {\em Algebra Log.}, 38(3):316--333, 379, 1999.

\bibitem[Mal58]{Mal58}
A.~I. Mal'cev.
\newblock On homomorphisms onto finite groups.
\newblock {\em Ivanov. Gos. Ped. Inst. Uc. Zap.}, 18:49--60, 1958.
\newblock (Russian).

\bibitem[ML98]{MR1712872}
S.~Mac~Lane.
\newblock {\em Categories for the working mathematician}, volume~5 of {\em Graduate Texts in Mathematics}.
\newblock Springer-Verlag, New York, second edition, 1998.

\bibitem[Mor18]{MoAdj}
T.~Moraschini.
\newblock A logical and algebraic characterization of adjunctions between generalized quasi-varieties.
\newblock {\em J. Symb. Log.}, 83(3):899--919, 2018.

\bibitem[Rap75]{RapregR}
R.~Raphael.
\newblock Some remarks on regular and strongly regular rings.
\newblock {\em Canad. Math. Bull.}, 17(5):709--712, 1974/75.

\bibitem[SVV09]{MR2581451}
L.~N. Shevrin, B.~M. Vernikov, and M.~V. Volkov.
\newblock Lattices of semigroup varieties.
\newblock {\em Izv. Vyssh. Uchebn. Zaved. Mat.}, (3):3--36, 2009.

\end{thebibliography}

\end{document}